\crefname{equation}{}{}
\apptocmd{\sloppy}{\hbadness 10000\relax}{}{} 
\colorlet{refkey}{orange!20}
\colorlet{labelkey}{blue!30}
\crefname{algocf}{Algorithm}{Algorithms}
\let\originalleft\left
\let\originalright\right
\renewcommand{\left}{\mathopen{}\mathclose\bgroup\originalleft}
\renewcommand{\right}{\aftergroup\egroup\originalright}
\numberwithin{equation}{section}
\newtheorem{theorem}{Theorem}[section]
\newtheorem{proposition}[theorem]{Proposition}
\newtheorem{lemma}[theorem]{Lemma}
\crefname{claim}{Claim}{Claims}
\newtheorem*{question*}{Question}
\theoremstyle{definition}
\newtheorem{definition}[theorem]{Definition}
\newtheorem*{definition*}{Definition}
\theoremstyle{remark}
\crefname{appsec}{Appendix}{Appendices}
\crefname{equation}{}{} 
\newcommand{\mb}{\mathbb}
\newcommand{\mbf}{\mathbf}
\newcommand{\mbm}{\mathbbm}
\newcommand{\mc}{\mathcal}
\newcommand{\mf}{\mathfrak}
\newcommand{\mr}{\mathrm}
\newcommand{\ol}{\overline}
\newcommand{\on}{\operatorname}
\newcommand{\wh}{\widehat}
\global\long\def\GG{\mathbb{G}}
\global\long\def\E{\mathbb{E}}
\global\long\def\Var{\operatorname{Var}}
\global\long\def\vol{\operatorname{vol}}
\global\long\def\NN{\mathbb{N}}
\global\long\def\RR{\mathbb{R}}
\global\long\def\Pr{\mathbb{P}}
\global\long\def\eps{\varepsilon}
\title{Friendly bisections of random graphs}
\author[Ferber]{Asaf Ferber}
\address{Department of Mathematics, University of California, Irvine.}
\email{asaff@uci.edu}
\author[Kwan]{Matthew Kwan}
\address{Department of Mathematics, Stanford University, Stanford, CA.}
\email{mattkwan@stanford.edu}
\author[Narayanan]{Bhargav Narayanan}
\address{Department of Mathematics, Rutgers University, Piscataway, NJ 08854, USA}
\email{narayanan@math.rutgers.edu}
\author[Sah]{Ashwin Sah}
\author[Sawhney]{Mehtaab Sawhney}
\address{Department of Mathematics, Massachusetts Institute of Technology, Cambridge, MA 02139, USA}
\email{\{asah,msawhney\}@mit.edu}
\date{\today}
\begin{document}

\begin{abstract}
Resolving a conjecture of F\"uredi from 1988, we prove that with high probability, the random graph $\GG(n,1/2)$ admits a friendly bisection of its vertex set, i.e., a partition of its vertex set into two parts whose sizes differ by at most one in which $n-o(n)$ vertices have at least as many neighbours in their own part as across. Our proof is constructive, and in the process, we develop a new method to study stochastic processes driven by degree information in random graphs; this involves combining enumeration techniques with an abstract second moment argument.
\end{abstract}

\maketitle

\section{Introduction}

In a cut of a graph, i.e., a partition of its vertex set into two parts, we call a vertex \emph{friendly} if it has more neighbours in its own part than across, and \emph{unfriendly} otherwise. Questions about finding friendly and unfriendly partitions of graphs, i.e., partitions in which all (or almost all) the vertices are friendly or unfriendly, have been investigated in various contexts: in combinatorics, on account of their inherent interest~\cite{T83,S96,BS02,BL16,GK00,SD02,LL20}, in computer science, as `local' analogues of important NP-complete partitioning problems~\cite{ABPW17,CGVYZ20}, in probability and statistical physics, owing to their connections to spin glasses~\cite{GL18,GNS18, ADLO19, SGNS20}, and in logic and set theory~\cite{AMP90, SM90}; this list is merely a representative sample (and by no means exhaustive) since such partitions have been studied extremely broadly. On the other hand, when it comes to finding friendly or unfriendly \emph{bisections}, i.e., partitions into two parts whose sizes differ by at most one, much less is known. Our aim here is to prove an old and well-known conjecture about random graphs due to F\"uredi~\cite{F88}. This problem has gained some notoriety over the years, in part due to its inclusion in Green's list of 100 open problems~\cite[Problem~91]{GreOp}. Our main result is as follows.

\begin{theorem}\label{thm:furedi}
With high probability, an Erd\H os--R\'enyi random graph $G\sim\GG(n,1/2)$ admits a bisection in which $n-o(n)$ vertices are friendly.
\end{theorem}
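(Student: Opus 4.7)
My plan is a first-and-second-moment argument on the number of good balanced bisections. Let $Y = Y(\eps)$ count balanced bisections $(A,B)$ of $[n]$ in which at most $\eps n$ vertices are unfriendly in $G \sim \GG(n,1/2)$. To compute $\E[Y]$, fix $(A,B)$ and let $G$ vary: for each vertex $v$, the signed degree difference $d_A(v)-d_B(v)$ is a (nearly mean-zero) signed sum of independent Bernoullis, so $v$ is friendly with probability $\tfrac12 + O(n^{-1/2})$, and the friendliness events across distinct $v$'s are only weakly correlated (they share only individual edges). A Chernoff-type bound then gives $\Pr[\text{at most }\eps n\text{ unfriendly}] \gtrsim 2^{-n(1-H(\eps))}$, where $H$ is the binary entropy; multiplying by the $\approx 2^n$ balanced bisections yields $\E[Y] \gtrsim 2^{nH(\eps)-o(n)}$, which is exponentially large in $n$.

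The main task is then to control $\E[Y^2]=\sum_{(A,A')}\Pr[\text{both good}]$. The dangerous contribution comes from pairs $(A,A')$ of small symmetric difference $D = A \triangle A'$, where the friendliness events under $(A,B)$ and $(A',B')$ are highly correlated and could a priori inflate $\E[Y^2]$. I would stratify by $|D|=2k$: for each $k$, enumerate the possible $D$ and estimate the joint probability that both bisections are simultaneously good. This reduces to a joint local-central-limit estimate for the $2n$-dimensional random vector $\bigl(d_A(v)-d_B(v),\;d_{A'}(v)-d_{B'}(v)\bigr)_{v\in[n]}$, whose covariance structure is governed entirely by the edges incident to $D$.

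To match the abstract's promise of a \emph{constructive} proof, I would reinterpret this second moment computation as running two coupled copies of a stochastic process on $G$: reveal the edges of $G$ gradually and, at each step, commit some vertex to a side of the bisection based on its currently-observed partial degree difference, in such a way as to keep each unassigned vertex's conditional degree difference centered and well-spread. Two coupled copies of such a process then correspond to a pair $(A,A')$, and approximate independence between the copies is exactly what the second moment bound demands.

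\emph{Main obstacle.} The hardest step will be the joint anti-concentration estimate above, uniformly in the disagreement pattern $D$: a direct union bound over all $D$ is too lossy, and one needs a framework that handles many $D$'s together rather than one at a time. The paper's ``new method combining enumeration with an abstract second moment argument'' presumably systematizes exactly this reduction, extracting from the combinatorial enumeration of disagreement patterns a clean abstract inequality that can be plugged into the second moment. Once $\E[Y^2] \le (1+o(1))(\E[Y])^2$ is in hand, Paley--Zygmund gives $Y>0$ with positive probability, and a standard edge-exposure martingale (using that flipping one edge changes the number of good bisections by only a polynomial factor in $n$) upgrades this to high probability.
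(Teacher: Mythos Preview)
Your plan is not what the paper does, and the gaps you flag are real enough that I do not think this route reaches a proof.

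\textbf{What the paper actually does.} There is no count of good bisections. The argument is algorithmic: starting from an arbitrary bisection, repeatedly swap the $\lfloor\alpha n\rfloor$ unfriendliest vertices on each side (followed by a tiny random reshuffle). Two easy union-bound lemmas show that (i) the total friendliness $\sum_v \Delta_{A,B}(v)$ of any bisection lies in a window of length $O(n^{3/2})$, and (ii) each swap increases it by $\Omega(n^{3/2})$ unless one side already has at most $\varepsilon n$ unfriendly vertices. Hence after $O_\varepsilon(1)$ swaps one side is good. The remaining lemma---and this is where the phrase ``second moment'' in the abstract actually belongs---says that at every step the two sides carry the same number of unfriendly vertices up to $o(n)$. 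The second moment there is Chebyshev applied to \emph{degree statistics in a degree-constrained random graph}: one reveals only the degrees $\deg_{W_y}(v)$ needed to execute each swap, leaving $G$ uniform subject to those constraints; McKay--Wormald enumeration then shows that for any two vertices $u,v$ the events $\vec d(u)=\vec s$ and $\vec d(v)=\vec t$ are asymptotically independent, and Chebyshev gives concentration of the empirical degree profile (hence of the unfriendly counts) on each side.

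\textbf{Where your proposal breaks.} First, the sentence ``a Chernoff-type bound gives $\Pr[\text{at most }\varepsilon n\text{ unfriendly}]\gtrsim 2^{-n(1-H(\varepsilon))}$'' is not a Chernoff bound; you need a large-deviation \emph{lower} bound for a sum of $n$ indicators that are pairwise correlated through shared edges, and that is already nontrivial. Second, and more seriously, good bisections should cluster: conditioning on $(A,B)$ being good tilts a typical vertex's friendliness to order $\sqrt n$, so swapping any $k\ll n$ vertices perturbs each friendliness by $O(\sqrt k)$ and leaves the bisection good. This places exponentially many good bisections near each good one and is precisely the phenomenon that defeats naive second moments in random structures. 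You correctly identify this as ``the hardest step'' but offer no mechanism to control it---and the problem stood open for three decades with this approach available. Third, your boosting step is wrong as stated: flipping one edge changes the unfriendly count of \emph{every} bisection by at most $2$, so the change in $Y$ equals the number of bisections whose unfriendly count sits within $2$ of the threshold $\varepsilon n$, generically a constant fraction of $Y$ itself; the edge-Lipschitz constant is thus comparable to $\E[Y]$, far too large for Azuma.

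Finally, your attempted reinterpretation as ``two coupled copies of a revelation process'' is not what the paper does: the paper runs a single bisection through the swap process, and its second moment is over pairs of \emph{vertices} in a degree-constrained model, not pairs of bisections.
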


\subsection*{Degree-driven stochastic processes}
Although \cref{thm:furedi} is specifically about friendly bisections of random graphs, the approach we adopt to prove this result is rather general, and it may be that the more important point of this work is its contribution to methodology. Concretely, we develop a method that appears suitable for analysing many different types of stochastic processes on random graphs driven primarily by degree information; for example, in forthcoming work, the fourth and fifth authors~\cite{MAJDYN} use modifications of these techniques to settle various conjectures of Tran and Vu~\cite{TV20} concerning majority dynamics on random graphs. Below, we outline how our approach allows us to prove \cref{thm:furedi}.

We adopt a constructive approach that yields an efficient algorithm to find the bisection promised by \cref{thm:furedi}. To motivate our approach, it is instructive to consider the following basic algorithm, motivated by the classical large-cut-finding algorithm: starting with any bisection $A\cup B$ of a graph $G$, repeatedly check whether there are vertices $v\in A$ and $w\in B$ such that $\deg_B(v)>\deg_A(v)$ and $\deg_A(w)>\deg_B(v)$, and if so, swap $v$ and $w$. It is easy to see that such a swap must decrease the size (i.e., the number of crossing edges) of the bisection, so this algorithm must terminate. Of course, if we are unlucky, it might happen that when the algorithm terminates, all the vertices in $A$ are friendly, while very few of the vertices in $B$ are friendly, so the resulting bisection may be very far from satisfying the conclusion of \cref{thm:furedi}. However, it seems plausible that such an outcome is rather unusual: if $G$ is sampled from $\GG(n,1/2)$, then one might expect this algorithm (interpreted as a random process) to typically follow a predictable trajectory, and in particular, the number of friendly vertices in $A$ and in $B$ to stay roughly the same for most of the duration of the algorithm.

This is a promising starting point, especially due to the fact that we do not actually need to fully understand the typical trajectory of the process. Indeed, we only need to show that at each step $k$, the number of friendly vertices in $A$ concentrates around some value $N_k$. By symmetry (assuming for the moment that $n$ is even), the number of friendly vertices in $B$ would then concentrate around $N_k$ as well, so the numbers of friendly vertices in $A$ and $B$ would never get `too imbalanced'. However, it is far from obvious how to actually establish concentration. Roughly speaking, the main issue is that in order to execute even the first step of the algorithm, we have to inspect every vertex of our graph, meaning that there is seemingly `no remaining randomness' for the second step. This is in contrast with most other random graph processes in the literature (such as $H$-free or $H$-removal processes, as in~\cite{BK10,BFL15,FGM20} for example), where each individual step is defined in terms of a random choice.

There are two ideas that allow us to salvage enough randomness to establish the desired concentration. First, instead of swapping vertices one at at time, we shall instead swap a sizeable `batch' of vertices between $A$ and $B$ in each step; this is strongly reminiscent of the influential `nibbling' idea introduced by R\"odl~\cite{Rod85}. We will be able to use discrepancy properties of random graphs to show that, in a typical outcome of the random graph $\GG(n,1/2)$, when we have a bisection $A\cup B$ in which many vertices in $A$ and in $B$ are unfriendly, swapping a large number of the `unfriendliest' vertices in $A$ and in $B$ dramatically decreases the size of the bisection. That is to say, it should only take a few steps, (about $\exp(1/\varepsilon)$, in fact) to reach a bisection in which one of the two parts has $(1-\varepsilon)n/2$ friendly vertices. This makes the problem of establishing concentration more tractable, since we now only need to do this for a large constant number of steps. Our second main observation is that in order to execute a step of our algorithm, we only need to know the degrees $\deg_A(v)$ and $\deg_B(v)$ for each vertex $v$ at that stage (and not any other information about the graph). Thus, instead of revealing the whole graph to study the first step, we may simply reveal the required degree information, meaning that our random graph is now conditionally a degree-constrained random graph. We then have the randomness of this degree-constrained random graph with which to show concentration at the next step, for which we again only need to (dynamically) reveal some more degree information, and so on.

The above observations leave us with the task of demonstrating concentration in some (families of) degree-constrained random graphs. In order to study these degree-constrained random graphs, we have at our disposal powerful enumeration theorems due to McKay and Wormald~\cite{MW90}, and extensions by Canfield, Greenhill, and McKay~\cite{CGM08}, which give very precise asymptotic formulae for the number of graphs with specified degree information. In principle, this allows one to write down explicit formulae for essentially all relevant probabilities, from which one could attempt to compute the typical trajectory of the process. However, the necessary computations are formidable, and in particular, the various densities under consideration do not appear to have closed-form expressions past the first few iterations.

Our approach to circumventing these issues brings us to the heart of the matter: we develop an abstract second-moment argument with which one can establish concentration of various statistics at a given step, using only \emph{stability} and \emph{anti-concentration} information about the outcomes of previous steps. In particular, this enables us to establish concentration without actually knowing the trajectory of the process. This is superficially reminiscent of martingale arguments establishing concentration around the mean without any knowledge of the location of the mean itself (see~\cite{AS16}), but the inputs to such arguments, typically Lipschitz-like behaviour of the random variables of interest, are rather different from the inputs to our argument. As mentioned earlier, the methods in our argument are quite general, and we anticipate that a broad range of similar stochastic processes will now become amenable to analysis.

\subsection*{Notation}
Our graph-theoretic notation is for the most part standard; see~\cite{BB98} for terms not defined here. In a graph $G$, we write $\deg(v)$ for the degree of a vertex $v \in V(G)$, and $N(v)$ for its neighbourhood; also, for a subset $U \subseteq V(G)$, we write $\deg_U (v)$ for the number of neighbours of $v$ in $U$, i.e.,  for the size of $N(v) \cap U$.  We write $\GG(n,p)$ for the Erd\H{o}s--R\'enyi random graph on $n$ vertices with edge density $p$. 

Our use of asymptotic notation is mostly standard as well.  We say that an event occurs with high probability if it holds with probability $1 - o(1)$ as some parameter (usually $n$, unless we specify otherwise) grows large. Constants suppressed by asymptotic notation may be absolute, or might depend on other fixed parameters; we shall spell out the latter situation explicitly whenever there might be cause for confusion. To lighten notation, we write $f = g \pm h$ for $|f-g| \le h$. We maintain this convention with asymptotic notation as well, so $f = g \pm n^{-\Omega(1)}$ for example is taken to mean $|f - g| = n^{-\Omega(1)}$. We also adopt the following non-standard bit of notation: as a parameter $n$ grows large, we write $f \simeq h$ if $f = (1 \pm n^{-\Omega(1)})h$. Finally, following a common abuse, we omit floors and ceilings wherever they are not crucial.

\subsection*{Organisation}
This paper is organised as follows. In~\cref{sec:overview}, we describe the swapping process that allows us to prove~\cref{thm:furedi}, and also give the deduction of our main result from a few key lemmas. In~\cref{sec:decrement}, we dispose of the more routine of these lemmas. The beef of our argument is in~\cref{sec:concentration}, where we must work rather hard to establish the key concentration properties of our swapping process.

\section{Proof overview}\label{sec:overview}
In this section we make some initial observations, then describe a random swapping process that underlies our argument and state some facts about this process (with proofs to follow later). We then show how to deduce~\cref{thm:furedi} from these facts.

Given a bisection $A\cup B$ of a graph, the \emph{friendliness} $\Delta_{A,B}(v)$ of a vertex $v$ is the difference between the number of its neighbours on its own side and the number of its neighbours on the other side. We say a vertex is \emph{friendly} if its friendliness is positive, and otherwise, we say it is \emph{unfriendly}. The \emph{total friendliness} $\Delta_{A,B}$ of the bisection  $A \cup B$ is then given by
\[\Delta_{A,B} = \sum_{v \in V(G)} \Delta_{A,B}(v).\]

We also make a simple observation that allows us to restrict our attention to random graphs of even order (which in turn allows us to somewhat simplify the presentation). A simple union bound (similar to calculations we will see in \cref{sec:decrement}) shows that with high probability, in \emph{any} partition of the vertex set of $\GG(n,1/2)$, at most $10n / \log n$ vertices have friendliness $1$, i.e., have exactly one more neighbour on their own side than across, or vice versa. Consequently, it clearly suffices to establish \cref{thm:furedi} for $\GG(n,1/2)$ when $n$ is even; indeed, when $n$ is odd, we may delete an arbitrary vertex from the random graph, apply \cref{thm:furedi} to the result, and add back the deleted vertex to either part to get the desired bisection. Therefore, all graphs under consideration will be of even order unless explicitly specified otherwise, and we shall not belabour this point any further.

The following lemma shows that for a typical outcome of the random graph $\GG(n,1/2)$, there is a window of length $O(n^{3/2})$ within which the total friendliness of any bisection lies.
\begin{lemma} \label{lem:total-friendliness}
There is a  $\gamma>0$ such that for a random graph $G\sim\GG(n,1/2)$, with high probability, every bisection $A\cup B$ of $G$ has $|\Delta_{A,B}|<\gamma n^{3/2}$.
\end{lemma}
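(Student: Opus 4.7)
The plan is to observe that for any \emph{fixed} bisection $A\cup B$, the total friendliness $\Delta_{A,B}$ is a linear function of the independent edge indicators, so its fluctuations are sub-Gaussian on the scale of $n$; consequently a deviation of order $n^{3/2}$ happens with probability $\exp(-\Omega(n))$. Since the number of bisections is only $\binom{n}{n/2} \le 2^{n}$, a union bound then establishes the lemma, provided $\gamma$ is taken large enough.

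More concretely, I would first rewrite $\Delta_{A,B}$ as an explicit linear functional of the edge indicators. An edge with both endpoints on the same side contributes $+2$ to $\Delta_{A,B}$ (once from each endpoint), while an edge between the two sides contributes $-2$. Writing $X_{uv} \in \{0,1\}$ for the indicator that $\{u,v\}\in E(G)$, this gives
\[
\Delta_{A,B} \;=\; 2\!\!\!\sum_{\substack{\{u,v\}\\ \text{same side}}}\!\!\! X_{uv} \;-\; 2\!\!\!\sum_{\substack{\{u,v\}\\ \text{opposite sides}}}\!\!\! X_{uv},
\]
whose expectation is easily computed to equal $-n/2$, and which is a sum of $\binom{n}{2}$ independent random variables each bounded by $2$ in absolute value.

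Next, I would apply Hoeffding's inequality to this sum. For any fixed bisection and any $\gamma>0$, this yields a bound of the shape
\[
\Pr\bigl[\,|\Delta_{A,B} - \E\Delta_{A,B}| \ge \gamma n^{3/2}\,\bigr] \;\le\; 2\exp(-c\gamma^{2}n)
\]
for an absolute constant $c>0$. Since $|\E\Delta_{A,B}| = n/2 = o(n^{3/2})$, the same bound (with a slightly smaller constant) controls $|\Delta_{A,B}|$ itself. Taking a union bound over the $\binom{n}{n/2} \le 2^{n}$ bisections of $V(G)$ and choosing $\gamma$ large enough that $c\gamma^{2} > \log 2$ then gives a failure probability of $\exp(-\Omega(n))$, which is certainly $o(1)$.

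There is really no substantive obstacle here; the only `delicate' point is the quantitative balance between the entropy $2^{n}$ of choices of a bisection and the Chernoff tail $\exp(-c\gamma^{2}n)$, which is precisely why the threshold has to scale like $n^{3/2}$ (rather than the typical per-bisection fluctuation of order $n$) and forces $\gamma$ to be a sufficiently large absolute constant.
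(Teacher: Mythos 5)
Your proof is correct and follows essentially the same route as the paper: both express $\Delta_{A,B}$ (up to centering by $-n/2$) as a sum of $\binom{n}{2}$ independent bounded random variables, apply a Chernoff/Hoeffding tail bound to get failure probability $\exp(-\Omega(\gamma^2 n))$ for each bisection, and union bound over the $\binom{n}{n/2}\le 2^n$ bisections, choosing $\gamma$ large enough. The only cosmetic difference is that the paper recenters to $\Delta_{A,B}+n/2$ so as to invoke its stated Rademacher Chernoff bound directly, whereas you subtract the mean and apply Hoeffding; these are interchangeable.
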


Next, we shall define a simple random `swap' operation that modifies a bisection with the aim of making it more friendly.

\begin{definition}
Given a bisection $A\cup B$ of an $n$-vertex graph $G$, the \emph{$\alpha$-swap} of $A\cup B$ is the random bisection obtained by the following procedure. First, we take the subset $A'\subseteq A$ of the $\lfloor\alpha n\rfloor$ most unfriendly vertices in $A$, and the subset $B'\subseteq B$ of the $\lfloor\alpha n\rfloor$ most unfriendly vertices in $B$ (breaking ties according to some \textit{a priori} fixed ordering of the vertex set), and swap $A'$ and $B'$. At this stage, the parts of the resulting bisection are then $(A\setminus A')\cup B'$ and $(B\setminus B')\cup A'$. Next, we make a uniformly random choice of $\lfloor\alpha^4n\rfloor$ vertices on both of these sides, and swap these subsets.
\end{definition}

We remark that the second (random) swap in the $\alpha$-swap procedure is not actually necessary for the proof of \cref{thm:furedi}, but the analysis later in the paper would become substantially more involved without it.

The following lemma shows that in a typical outcome of the random graph $\GG(n,1/2)$, for every bisection $A\cup B$, either our swapping operation increases the total friendliness by $\Omega(n^{3/2})$, or almost all the vertices in one of the parts (either $A$ or $B$) are already friendly.
\begin{lemma}\label{lem:swap-increment}
For every fixed $\varepsilon>0$, there are $\alpha\in(0,\varepsilon)$ and $\beta>0$ for which a random graph $G\sim\GG(n,1/2)$
has, with high probability, the following property. In any bisection $A\cup B$ of $G$ in which at least $\varepsilon n$ vertices are unfriendly in each of $A$ and $B$, the random bisection $A_1\cup B_1$ obtained from an $\alpha$-swap of $A\cup B$ always satisfies
\[\Delta_{A_1,B_1}\ge\Delta_{A,B}+\beta n^{3/2}.\] 
\end{lemma}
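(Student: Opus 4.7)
The plan is to combine a simple algebraic identity for the friendliness change with random-graph discrepancy, a uniform anti-concentration estimate, and a spectral bound. A direct expansion shows that swapping any equal-sized sets $U\subseteq A$ and $V\subseteq B$ changes $\Delta_{A,B}$ by exactly
\[
4\bigl(S_A(U)+S_B(V)\bigr)-8\bigl(e(U,V)-E(U)-E(V)\bigr),
\]
where $S_A(U)=-\sum_{v\in U}\Delta_{A,B}(v)$ and $S_B(V)=-\sum_{v\in V}\Delta_{A,B}(v)$ are the total unfriendlinesses on the two swapped sides. I apply this once to the deterministic swap $(A',B')$ and once to the random swap inside the resulting bisection $A_1\cup B_1$. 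The discrepancy terms $e(U,V)-E(U)-E(V)$ are controlled uniformly in $U,V$ by a standard Chernoff-plus-union-bound argument, contributing at most $O\bigl(\alpha^{3/2}\sqrt{\log(1/\alpha)}\bigr)n^{3/2}$ for the deterministic phase and a much smaller quantity for the random one.

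The main obstacle is lower bounding $S_A(A')+S_B(B')$. The key ingredient I propose is the following uniform anti-concentration lemma: for some $\delta=\varepsilon\exp(-\Theta(1/\varepsilon))>0$, with high probability over $G\sim\GG(n,1/2)$, every bisection $A\cup B$ has at most $\varepsilon n/2$ vertices $v$ with $|\Delta_{A,B}(v)|\le\delta\sqrt{n}$. To prove this, fix a partition $V(G)=V_1\sqcup V_2$ of the vertex set into two halves independently of the bisection, and for each bisection condition on all edges inside $V_1$. Then for each $v\in V_1$, $\Delta_{A,B}(v)$ splits as a fixed constant plus the (random) contribution from the $n/2$ edges joining $v$ to $V_2$; this contribution is a signed sum of independent Bernoullis with standard deviation $\Theta(\sqrt n)$, so Littlewood--Offord (or Berry--Esseen) gives $\Pr[|\Delta_{A,B}(v)|\le\delta\sqrt{n}]\le C\delta$. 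These events depend on disjoint edge sets across $v\in V_1$, hence are independent, and the count of ``near-zero'' vertices in $V_1$ is stochastically dominated by $\mathrm{Bin}(n/2,C\delta)$. The multiplicative Chernoff bound $\Pr[X\ge\varepsilon n/4]\le(2eC\delta/\varepsilon)^{\varepsilon n/4}$ beats the $2^n$-union bound over bisections whenever $\delta\le\varepsilon\cdot\exp(-O(1/\varepsilon))$, and combining with the analogous bound for $V_2$ gives the claim. Now taking $\alpha\le\varepsilon/2$, the hypothesis of $\ge\varepsilon n$ unfriendlies in $A$ forces at least $\varepsilon n/2\ge\alpha n$ of them to have $\Delta<-\delta\sqrt{n}$, so the top $\alpha n$ unfriendliest vertices in $A$ all satisfy $\Delta\le-\delta\sqrt{n}$, giving $S_A(A')\ge\alpha\delta n^{3/2}$, and the same bound for $S_B(B')$.

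For the random swap of $\alpha^4 n$ vertices, the identity $\Delta_{A,B}(v)=\chi_v(A\chi)_v$ (with $\chi\in\{\pm1\}^{V(G)}$ encoding the bisection and $A$ the adjacency matrix) gives $\sum_v\Delta_{A,B}(v)^2=\|A\chi\|^2$. Since $\chi\perp\mathbf{1}$, we may replace $A$ by the centered matrix $A-\tfrac12 J$, whose operator norm is $O(\sqrt n)$ with high probability by standard random-matrix bounds, so $\sum_v\Delta(v)^2=O(n^2)$ uniformly over all bisections (including $A_1\cup B_1$). Cauchy--Schwarz then bounds the contribution of any $\alpha^4 n$-vertex subset to $|S_A+S_B|$ by $\sqrt{\alpha^4 n}\cdot O(n)=O(\alpha^2)\,n^{3/2}$, controlling the worst-case random-swap perturbation. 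Putting everything together, the $\alpha$-swap increment is at least
\[
8\alpha\delta n^{3/2}-O\bigl(\alpha^{3/2}\sqrt{\log(1/\alpha)}\bigr)n^{3/2}-O(\alpha^2)\,n^{3/2},
\]
and choosing $\alpha$ sufficiently small in terms of $\delta(\varepsilon)$ makes the subtracted terms at most half the main term, yielding $\beta=\Theta(\alpha\delta)>0$. The main challenge is the uniform anti-concentration step, where the multiplicative Chernoff tail needs exactly enough bite to absorb the $2^n$-union bound over bisections; the exponentially small dependence $\delta=\varepsilon\exp(-\Theta(1/\varepsilon))$ also matches the anticipated ``$\exp(1/\varepsilon)$ iterations'' mentioned in the overview.
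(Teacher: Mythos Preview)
Your proof is correct and follows the same overall structure as the paper's: establish uniform anti-concentration of the friendlinesses $\Delta_{A,B}(v)$ to guarantee the main gain from the deterministic swap, and control the correction terms via discrepancy bounds. Two points of departure are worth noting. First, your anti-concentration argument fixes an auxiliary partition $V_1\sqcup V_2$ and conditions on $G[V_1]$; the paper instead conditions on $G[A]$ for each bisection, which already makes the variables $\{\Delta_{A,B}(v):v\in A\}$ mutually independent without the extra layer---your version works but is slightly more elaborate than necessary. Second, to bound the damage from the random $\alpha^4 n$-swap, the paper uses a direct Chernoff-plus-union-bound over all pairs of swap sets (its Lemma~3.4), obtaining $|\Delta_{A_1,B_1}-\Delta_{A'',B''}|\le\delta^{1/3}n^{3/2}$ for $\delta=\alpha^4$; you instead invoke the spectral bound $\|A-\tfrac12 J\|_{\mathrm{op}}=O(\sqrt n)$ to get $\sum_v\Delta(v)^2=O(n^2)$ uniformly and then apply Cauchy--Schwarz. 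Both routes give what is needed; the paper's is more elementary (no random-matrix input), while yours yields a slightly sharper worst-case estimate and packages the error as an $\ell^2$ bound that holds for every bisection at once.
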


Finally, the next lemma establishes concentration properties for bisections obtained by iterating our swapping operation.

\begin{lemma}\label{lem:symmetry}
Fix $\varepsilon>\alpha>0$, $k\in\NN$, and an arbitrary bisection $A\cup B$ of the vertex set of $\GG(n,1/2)$. For a random graph $G\sim\GG(n,1/2)$, let $A_k\cup B_k$ be the bisection obtained by performing $k$ iterations of the $\alpha$-swap procedure starting from $A\cup B$. Writing $X$ and $Y$ respectively for the number of unfriendly vertices in $A_k$ and $B_k$, we have with high probability that~$|X-Y|=o(n)$.
\end{lemma}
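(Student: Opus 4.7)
The plan is to combine a symmetry argument with a concentration bound. I will show (i) $X$ and $Y$ have the same distribution, hence $\E X = \E Y$, and (ii) $|X - \E X| = o(n)$ with high probability. By (i), conclusion (ii) also gives $|Y - \E Y| = o(n)$ with high probability, and the triangle inequality then yields $|X - Y| = o(n)$ with high probability.

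For the symmetry in (i): since $|A| = |B|$, choose any involution $\pi$ of the vertex set with $\pi(A) = B$ (and hence $\pi(B) = A$). The random graph $\pi(G)$ has the same distribution as $G$, because $\GG(n,1/2)$ is invariant under vertex permutations. The $\alpha$-swap procedure is manifestly symmetric in its two input parts, and also covariant under vertex relabelings, so running $k$ iterations of the procedure on $(\pi(G); A, B)$ (with fresh auxiliary randomness for the random component of the swap) produces a bisection distributed as $(\pi(B_k), \pi(A_k))$: relabeling by $\pi$ converts the input $(\pi(G); A, B)$ to $(G; B, A)$, the swap of parts in the input swaps the parts of the output, and relabeling back by $\pi$ gives the claim. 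In particular, the number of vertices in $\pi(B_k)$ that are unfriendly in $\pi(G)$ has the same distribution as $Y$. Since the output of the procedure on $(\pi(G); A, B)$ is, on the other hand, also distributed as $(A_k, B_k)$ because $\pi(G) \stackrel{d}{=} G$, the same quantity also has the same distribution as $X$, so $X \stackrel{d}{=} Y$.

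For the concentration in (ii): the natural route is to bound $\Var(X) = o(n^2)$ and apply Chebyshev's inequality. Writing $X = \sum_{v} \mathbf{1}[v \text{ unfriendly in } A_k]$, the variance expands into a sum over pairs $(u,v)$ of covariances of the pairwise indicators, so what is needed is a good control on the joint probability that both $u$ and $v$ end up unfriendly in $A_k$. To access these, the strategy is to reveal the random graph in stages synchronized with the $k$ iterations of the process: at each step $i \le k$, expose only the degree information needed to execute the next $\alpha$-swap, so that the rest of $G$ remains conditionally distributed as a (suitable) degree-constrained random graph. The enumeration theorems of McKay--Wormald and Canfield--Greenhill--McKay then supply sharp asymptotics for the conditional distributions at each stage, and an abstract second-moment argument, which takes as input only stability and anti-concentration for the intermediate degree statistics, should propagate concentration from step to step. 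This is exactly the framework to be developed in \cref{sec:concentration}.

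The main obstacle is (ii). Although the symmetry in (i) pins down $\E X = \E Y$ with no calculation at all, actually showing $X$ concentrates around $\E X$ is delicate: the "most unfriendly" selection performed at each $\alpha$-swap depends on every vertex's degree, so after just one step there is no obvious residual independence in $G$. The staged-reveal strategy is designed exactly to overcome this --- it exposes information sparingly enough that useful conditional randomness persists --- but the typical densities driving the process appear not to admit closed forms past the first few iterations, so explicit trajectory computations are out of reach, and an abstract second-moment framework that certifies concentration without needing the mean is essential.
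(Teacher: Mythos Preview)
Your plan matches the paper's: it too reduces \cref{lem:symmetry} to showing $X=N+o(n)$ with high probability for some deterministic $N$ (equivalently, your $|X-\E X|=o(n)$), and then invokes the $A\leftrightarrow B$ symmetry of $\GG(n,1/2)$ to conclude the same for $Y$. The concentration step is exactly the content of \cref{prop:empirical-induction} together with \cref{lem:halfspace-simple}; note that the second-moment computation there is carried out not on the unfriendliness indicators directly but on the finer empirical degree distributions $\wh{\mc L}_x$, which is what makes the inductive propagation through the $k$ stages tractable.
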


With these facts in hand, we may now easily deduce \cref{thm:furedi}.
\begin{proof}[Proof of \cref{thm:furedi}]
For any fixed $\varepsilon>0$, we shall show that $G\sim\GG(n, 1/2)$ with high probability has a bisection in which at most $2\varepsilon n+o(n)$ vertices are unfriendly. 

Say $V(G) = \{1, \dots, n\}$, define the bisection $A_0\cup B_0$ by $A_0=\{1,\dots,n/2\} $
and $B_0=\{n/2+1,\dots,n\}$. Let $\gamma$ be as in \cref{lem:total-friendliness} and $\beta$ as in \cref{lem:swap-increment} applied to $\epsilon$. Set $K=\lceil2\gamma/\beta\rceil+1$, and let 
\[A_1\cup B_1,\;A_2\cup B_2,\dots,A_K\cup B_K\] be the sequence of bisections arising from $K$ iterations of the $\alpha$-swap procedure starting from $A_0 \cup B_0$. 

Say that a bisection $A\cup B$ is \emph{$\varepsilon$-good} if there are at most $\varepsilon n$ unfriendly vertices in $A$ or at most $\varepsilon n$ unfriendly vertices in $B$. Now, the following properties hold with high probability, by \cref{lem:total-friendliness,lem:swap-increment,lem:symmetry}.

\begin{enumerate}
    \item\label{A1} There is an interval of length at most $2\gamma n^{3/2}$ such that the total friendliness of every bisection of $G$ lies in this interval.
    \item\label{A2} For every $0\le k\le K-1$, either $A_k\cup B_k$ is $\varepsilon$-good, or $\Delta_{A_{k+1},B_{k+1}}\ge \Delta_{A_k,B_k}+\beta n^{2/3}$.
    \item\label{A3} For every $1\le k\le K$, the numbers of unfriendly vertices in $A_k$ and in $B_k$ differ by $o(n)$.
\end{enumerate}

Fix outcomes of $G$ and $A_1\cup B_1,A_2\cup B_2,\dots,A_K\cup B_K$ satisfying all these properties. Now, by property~\eqref{A1}, it is not possible for the total friendliness to increase by $\beta n^{3/2}$ in each of the $K$ iterations. So, by property~\eqref{A2}, there must be some $k$ for which $A_k\cup B_k$ is $\varepsilon$-good, meaning that there are at most $\varepsilon n$ unfriendly vertices in $A_k$ or at most $\varepsilon n$ unfriendly vertices in $B_k$. The third property~\eqref{A3} now ensures that there are at most $2\varepsilon n+o(n)$ unfriendly vertices in total at this stage. The bisection $A_k \cup B_k$ has the properties we desire, proving the result.
\end{proof}

\subsection{Overview of the proofs of the key lemmas}
We now briefly discuss the proofs of \cref{lem:total-friendliness,lem:swap-increment,lem:symmetry}. First, \cref{lem:total-friendliness} is proved via a Chernoff bound and a simple union bound over all possible bisections. Second, \cref{lem:swap-increment} is also proved by a union bound: we show that that no bisection of the graph has many vertices with friendliness very close to zero, so that there is always some reasonably large gain from swapping unfriendly vertices; here, one must also control the (small) amount of additional unfriendliness potentially introduced between pairs of swapped vertices.

The proof of \cref{lem:symmetry} is by far the most technical ingredient in the proof. At a high level, one runs the iterated swap algorithm on a random graph $G\sim \GG(n,1/2)$, at each step revealing only that information about $G$ (namely, degrees into certain parts) which is necessary to determine the outcome of the $\alpha$-swap procedure. So, at every step, we need to study a degree-constrained random graph model; this is accomplished using graph enumeration techniques in the style of McKay--Wormald~\cite{MW90}. One can track the fraction of vertices that live in prescribed parts at prescribed times inductively, showing via the second moment method in our degree-constrained random graph model that the numbers of different types of vertices are concentrated. However, several obstacles arise naturally due to the presence of complicated conditional distributions, and the need for all of the different `well-conditioned' degree-constrained models (based on different revelations) to converge to a single distribution of degrees. The totality of what must be tracked to implement this argument is contained in \cref{prop:empirical-induction}.

In particular, we note that the first part of the proof (\cref{lem:total-friendliness,lem:swap-increment}) and the second part of the proof (\cref{lem:symmetry}) are essentially logically independent, and the analysis here can be extended to a variety of similar algorithms based on degree sequences. One can think of the first part as providing a monovariant to the graph process analysed in the second part, guaranteeing that the graph partition `gets better' over time and converges to a friendly distribution of degrees rather than to an abstract (iterated) optimiser of some associated variational problem.

\section{Swapping decrement}\label{sec:decrement}
In this section we prove \cref{lem:total-friendliness,lem:swap-increment}. To start with, we need some simple facts about centered binomial distributions. The first is a Chernoff bound (see~\cite[Theorem~2.1]{JLR00}, for example) and the second follows from either Stirling's approximation or the Erd\H os--Littlewood--Offord theorem (see~\cite[Corollary~7.4]{TV10}).
\begin{theorem}\label{thm:binomial}
For $N\in\NN$, let $X_{1},\dots,X_{N}$ be independent
Rademacher random variables (satisfying $\Pr(X_{i}=1)=\Pr(X_{i}=-1)=1/2$),
and let $X=X_{1}+\dots+X_{N}$.
\begin{enumerate}
\item For all $t\ge0$, we have $\Pr(|X|\ge t)\le2e^{-t^{2}/(2N)}$.
\item For all $t\ge1$ and all $x\in\RR$, we have $\Pr(|X-x|\le t)\le \sqrt{2}t/\sqrt{N}$. \qed
\end{enumerate}
\end{theorem}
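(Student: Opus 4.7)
This theorem bundles two classical facts about Rademacher sums: a Chernoff upper tail bound in part~(1), and an Erd\H os--Littlewood--Offord type anti-concentration estimate in part~(2). My plan is to reproduce the standard short arguments for each.

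For part~(1), the classical exponential moment method works immediately. Comparing Taylor coefficients gives $\cosh\lambda \le e^{\lambda^2/2}$, so by independence $\E[e^{\lambda X}] \le e^{\lambda^2 N/2}$ for any $\lambda > 0$. Markov's inequality then yields $\Pr(X \ge t) \le e^{-\lambda t + \lambda^2 N/2}$, and optimizing at $\lambda = t/N$ produces $\Pr(X \ge t) \le e^{-t^2/(2N)}$. The matching lower-tail bound follows by symmetry, and a union bound supplies the factor of~$2$.

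For part~(2), the random variable $X$ lives on the lattice $\{-N, -N+2, \ldots, N\}$ of spacing $2$, with $\Pr(X = k) = \binom{N}{(N+k)/2}/2^N$ whenever $k \equiv N \pmod{2}$. Two ingredients combine: Stirling's formula (or the standard estimate $\binom{2m}{m} \le 4^m/\sqrt{\pi m}$) gives $\binom{N}{\lfloor N/2 \rfloor}/2^N \le \sqrt{2/(\pi N)}$, capping every single-point probability; and any interval of length $2t$ contains at most $t + 1$ lattice points of a fixed parity. Multiplying and using $t + 1 \le 2t$ for $t \ge 1$ already yields $\Pr(|X - x| \le t) = O(t/\sqrt{N})$. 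A slicker route is to invoke the Erd\H os--Littlewood--Offord inequality (Tao--Vu, Corollary~7.4) directly: it bounds $\Pr(X \in I)$ by $\binom{N}{\lfloor N/2 \rfloor}/2^N$ for any open interval $I$ of length~$2$, and one then tiles $[x - t, x + t]$ with $O(t)$ such intervals. Both parts are routine; the only mild obstacle is chasing the explicit constant $\sqrt{2}$ in part~(2), which I would handle by combining Stirling for large~$N$ with direct calculation for the handful of small cases where Stirling is loose.
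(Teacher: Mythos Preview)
Your proposal is correct and matches the paper's treatment exactly. The paper does not give a proof at all: it states the theorem with a \qed, remarking only that part~(1) is a Chernoff bound (citing \cite[Theorem~2.1]{JLR00}) and that part~(2) follows from Stirling's approximation or the Erd\H os--Littlewood--Offord theorem (citing \cite[Corollary~7.4]{TV10}). You have supplied precisely those two arguments, even down to the same Tao--Vu reference.

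One small remark on the constant in part~(2): your crude count gives at most $t+1$ lattice points in $[x-t,x+t]$ and a point mass bound of $\sqrt{2/(\pi N)}$, yielding $(t+1)\sqrt{2/(\pi N)} \le 2t\sqrt{2/(\pi N)} = \sqrt{8/\pi}\,t/\sqrt{N} \approx 1.60\,t/\sqrt{N}$, which slightly exceeds $\sqrt{2}\,t/\sqrt{N}$. You flag this yourself, and indeed the precise constant is immaterial everywhere the theorem is invoked in the paper (only the order $t/\sqrt{N}$ is ever used). So this is not a gap, just a cosmetic point.
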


The proof of \cref{lem:total-friendliness} is extremely simple, being a routine application of the union bound.
\begin{proof}[Proof of \cref{lem:total-friendliness}]
There are $\binom{n}{n/2} \le 2^{n}$ bisections in total. For each such bisection $A\cup B$, the random variable $\Delta_{A,B}+n/2$ has a centered binomial distribution to which \cref{thm:binomial} applies (with $N=\binom{n}{2}$). For
sufficiently large $\gamma$, we then have 
\[
\Pr\left(|\Delta_{A,B}|\ge\gamma n^{3/2}\right)\le2\exp\left(-\frac{(\gamma n^{3/2}-n/2)^{2}}{2\binom{n}{2}}\right)=o(2^{-n}),
\]
so the desired result follows from the union bound.
\end{proof}

\cref{lem:swap-increment} is also proved by the union bound, but for this, we will first need to prove some auxiliary lemmas.
\begin{lemma}\label{lem:mostly-discrepancy}
For any sufficiently small fixed $\eta>0$, a random graph $G\sim\GG(n,1/2)$ with high probability has the property that for every bisection $A\cup B$ of $G$, we have $|\Delta_{A,B}(v)|\ge 4^{-1/\eta}\sqrt{n}$ for all but at most $\eta n$ vertices $v\in A$, and for all but at most $\eta n$ vertices $v\in B$.
\end{lemma}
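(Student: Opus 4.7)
The plan is to union bound over all bisections, where for each fixed bisection the crucial ingredient is a conditioning trick that turns the friendlinesses of $A$-vertices into independent random variables, combined with the anti-concentration bound in \cref{thm:binomial}(2). I would fix an arbitrary bisection $A\cup B$ of $V(G)$ and bound the probability that more than $\eta n$ vertices of $A$ have friendliness less than $4^{-1/\eta}\sqrt n$ in absolute value; the argument for $B$ is symmetric and costs only a factor of $2$ in the final union bound.

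The key observation is that conditioning on the induced subgraph $G[A]$ decouples the friendlinesses of the vertices in $A$. Indeed, for each $v\in A$ one has $\Delta_{A,B}(v)=\deg_A(v)-\deg_B(v)=d_v-\deg_B(v)$, where $d_v$ is determined by the conditioning and $\deg_B(v)$ counts the edges from $v$ into $B$. As $v$ varies over $A$, these degrees depend on pairwise disjoint sets of potential $A$–$B$ edges, so $\{\deg_B(v)\}_{v\in A}$ are mutually independent given $G[A]$. Writing $\deg_B(v)=n/4+Y_v/2$ for a sum $Y_v$ of $n/2$ Rademacher variables, \cref{thm:binomial}(2) yields
\[
\Pr\bigl(|\Delta_{A,B}(v)|\le 4^{-1/\eta}\sqrt n \,\big|\, G[A]\bigr)\le C\cdot 4^{-1/\eta}
\]
for an absolute constant $C$, uniformly in the conditioning, provided $n$ is large enough that $4^{-1/\eta}\sqrt n\ge 1$.

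Hence, conditional on $G[A]$, the number of vertices $v\in A$ with $|\Delta_{A,B}(v)|<4^{-1/\eta}\sqrt n$ is stochastically dominated by a $\mathrm{Bin}(n/2,\,C\cdot 4^{-1/\eta})$ random variable, so the probability this count exceeds $\eta n$ is at most
\[
\binom{n/2}{\eta n}\bigl(C\cdot 4^{-1/\eta}\bigr)^{\eta n}\le\left(\frac{eC}{2\eta}\right)^{\!\eta n}4^{-n}.
\]
For all sufficiently small fixed $\eta>0$, the prefactor $(eC/(2\eta))^{\eta n}$ is at most $2^{n/2}$, say, since $\eta\log(eC/(2\eta))\to 0$ as $\eta\to 0$; the whole bound is then at most $2^{-3n/2}$. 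Averaging out the conditioning on $G[A]$ and taking a union bound over the $\binom{n}{n/2}\le 2^n$ bisections and over the two sides easily absorbs this.

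There is no serious obstacle here beyond identifying the right conditioning: once $G[A]$ is fixed, each $\Delta_{A,B}(v)$ for $v\in A$ becomes an affine function of an independent Rademacher sum, at which point \cref{thm:binomial}(2) together with a standard binomial tail estimate finishes the job.
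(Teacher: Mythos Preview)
Your proof is correct and follows essentially the same approach as the paper: condition on $G[A]$ to make the friendlinesses of the $A$-vertices independent, apply the anti-concentration bound from \cref{thm:binomial}(2) to each, and union bound over the at most $2^n$ bisections. Your write-up is in fact slightly cleaner in identifying the relevant Rademacher sum (with $N=n/2$ after conditioning) than the paper's version.
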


\begin{proof}
For each bisection $A\cup B$, if we condition on an outcome of $G[A]$, then the random variables $\{\Delta_{A,B}(v):v\in A$\} become mutually independent. Conditionally, for each $v\in A$, the random variable $2\Delta_{A,B}(v)+1$ has a centered binomial distribution to which \cref{thm:binomial} applies (with $N=n-1$). Therefore, 
\[\Pr\left(|\Delta_{A,B}(v)|\le4^{-1/\eta}\sqrt{n}\right)\le(\sqrt{2}\cdot 4^{-1/\eta}\sqrt{n})/\sqrt{n-1}\le2\cdot 4^{-1/\eta}\]
for large $n$, from which it follows that the probability that the property in the
statement of the lemma does not hold is at most
\[
2^{n}\binom{n/2}{\eta n}(2\cdot 4^{-1/\eta})^{\eta n}=o(1). \qedhere
\]
\end{proof}

\begin{lemma}\label{lem:small-set-bound}
For any sufficiently small fixed $\alpha>0$, a random graph $G\sim\GG(n,1/2)$ with high probability has the property that for every bisection $A\cup B$ of $G$ and every pair of subsets $A'\subseteq A$ and $B'\subseteq B$ each of size $\alpha n$, we have 
\[|\Delta_{A',B'}|\le\alpha^{4/3}n^{3/2},\]
where we view $A'\cup B'$ as a bisection of the induced subgraph $G[A'\cup B']$.
\end{lemma}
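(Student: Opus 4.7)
The plan is a straightforward union bound. The first observation is that the quantity $\Delta_{A',B'}$ depends only on the ordered pair $(A',B')$ and the induced subgraph $G[A'\cup B']$, not on the rest of the bisection $A\cup B$. So it suffices to prove: with high probability, every pair of disjoint $\alpha n$-subsets $A',B'\subseteq V(G)$ satisfies $|\Delta_{A',B'}|\le\alpha^{4/3}n^{3/2}$. Expanding by counting contributions edge-by-edge, each edge inside $A'$ or inside $B'$ contributes $+2$ and each edge between $A'$ and $B'$ contributes $-2$, so
\[
\Delta_{A',B'}\;=\;2\bigl(e(A')+e(B')\bigr)\;-\;2\,e(A',B'),
\]
which is (twice) a signed sum of $N=2\binom{\alpha n}{2}+\alpha^2 n^2$ independent Bernoulli$(1/2)$ edge indicators, of the same shape as the binomial statistic in \cref{thm:binomial}. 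In particular $\E[\Delta_{A',B'}]=-\alpha n=O(\alpha n)$, which is absorbed harmlessly into the target bound $\alpha^{4/3}n^{3/2}$.

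For each fixed pair $(A',B')$, I apply the Chernoff estimate in \cref{thm:binomial}(i) to $\Delta_{A',B'}-\E[\Delta_{A',B'}]$, viewed (up to a factor of $2$) as a sum of $N=\Theta(\alpha^2 n^2)$ Rademacher variables. Taking $t=\tfrac12\alpha^{4/3}n^{3/2}$ (which is safely larger than $|\E[\Delta_{A',B'}]|$ once $n$ is large), this gives
\[
\Pr\bigl(|\Delta_{A',B'}|\ge\alpha^{4/3}n^{3/2}\bigr)\;\le\;2\exp\bigl(-c\,\alpha^{2/3}n\bigr)
\]
for some absolute constant $c>0$. The number of ordered pairs $(A',B')$ of disjoint $\alpha n$-subsets of $V(G)$ is at most $\binom{n}{\alpha n}^2\le\exp\bigl(Cn\alpha\log(1/\alpha)\bigr)$ for a universal constant $C$.

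The remaining step is the arithmetic comparison that makes the union bound work: for sufficiently small $\alpha>0$ one has $\alpha^{2/3}\gg\alpha\log(1/\alpha)$, so $c\alpha^{2/3}n$ dominates $Cn\alpha\log(1/\alpha)$, and the union bound over all pairs yields a failure probability of $o(1)$. This is really the whole point of the seemingly odd exponent $4/3$ in the statement: the Chernoff exponent ends up being $\alpha^{2/3}n$, which must beat the entropy cost $n\alpha\log(1/\alpha)$ of choosing two $\alpha n$-subsets. I don't anticipate a genuine obstacle here; the only mild care needed is to check that $\E[\Delta_{A',B'}]=-\alpha n$ can be folded into the deviation bound (which it can, since $\alpha n\ll\alpha^{4/3}n^{3/2}$ once $\alpha^{1/3}\sqrt{n}\to\infty$) and that $\alpha$ is chosen small enough so that the two exponential rates separate.
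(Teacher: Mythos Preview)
The proposal is correct and takes essentially the same approach as the paper: observe that the event depends only on $(A',B')$, note that $\Delta_{A',B'}+\alpha n$ is a centered Rademacher sum of $N=\binom{2\alpha n}{2}$ terms, apply the Chernoff bound from \cref{thm:binomial}, and union-bound over the at most $\binom{n}{\alpha n}^{2}$ choices of $(A',B')$. Your explicit verification that the Chernoff exponent $\Theta(\alpha^{2/3}n)$ beats the entropy cost $\Theta(\alpha\log(1/\alpha)\,n)$ for small $\alpha$ is exactly the computation underlying the paper's one-line claim that the tail probability is $o\bigl(\binom{n}{\alpha n}^{-2}\bigr)$.
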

\begin{proof}
Note that the event does not depend on $A,B$, only on $A',B'$. For subsets $A'$ and $B'$ as in the statement of the lemma, the random variable $\Delta_{A',B'}+\alpha n$ has a centered binomial distribution to which \cref{thm:binomial} applies (with $N=\binom{2\alpha n}{2}$). We then have
\[
\Pr\left(|\Delta_{A',B'}|\ge \alpha^{4/3}n^{3/2}\right)\le2\exp\left(-\frac{(\alpha^{4/3}n^{3/2}-\alpha n)^{2}}{2\binom{2\alpha n}{2}}\right)=o\left(\binom{n}{\alpha n}^{-2}\right),
\]
so the desired result follows from a union bound over all choices of $A'$ and $B'$.
\end{proof}
\begin{lemma}\label{lem:swap-no-damage}
For any sufficiently small fixed $\delta>0$, a
random graph $G\sim\GG(n,1/2)$ with high probability has the following property. For
every bisection $A\cup B$, and every pair of subsets $A'\subseteq A,B'\subseteq B$
each of size $\delta n$, if we swap $A'$ and $B'$ to obtain a bisection $A_1\cup B_1$ with $A_1=(A\setminus A')\cup B'$ and $B_1=(B\setminus B')\cup A'$, then
we have 
\[|\Delta_{A_1,B_1}-\Delta_{A,B}|\le \delta^{1/3} n^{3/2}.\]
\end{lemma}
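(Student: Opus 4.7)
My plan is to derive a clean expression for $\Delta_{A_1, B_1} - \Delta_{A, B}$ as a signed sum of edge indicators, apply a Chernoff bound to each fixed partition, and finish with a union bound.

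First I would observe that the four sets $A'$, $B'$, $C := A \setminus A'$, and $D := B \setminus B'$ form an ordered partition of $V(G)$ with $|A'| = |B'| = \delta n$ and $|C| = |D| = n/2 - \delta n$. Any edge whose endpoints both lie in $C \cup D$ keeps its within/crossing status under the swap and so contributes nothing to the change. A short accounting of the four remaining edge-types (those incident to $A' \cup B'$) yields
\[
\Delta_{A_1, B_1} - \Delta_{A, B} \;=\; 4\bigl[\,e(C, B') + e(D, A') - e(C, A') - e(D, B')\,\bigr] \;=:\; 4X.
\]
In particular, this change depends only on the ordered $4$-partition $(A', B', C, D)$, and not separately on $(A,B)$; so it suffices to union-bound over such partitions.

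Next, for a fixed partition, I would bound $X$ by a Chernoff estimate. Since the four pairs $(C, B'), (D, A'), (C, A'), (D, B')$ are pairwise disjoint, $X$ is a signed sum of $N \le 2\delta n^2$ independent $\mathrm{Bernoulli}(1/2)$ indicators with $\pm 1$ coefficients. The symmetries $|C| = |D|$ and $|A'| = |B'|$ force $\E X = 0$, and applying the first part of \cref{thm:binomial} to the Rademacher sum $2X$ gives
\[
\Pr\bigl(|X| \ge \delta^{1/3} n^{3/2}/4\bigr) \;\le\; 2\exp\bigl(-\delta^{-1/3} n / 16\bigr).
\]

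Finally, I would union-bound over all ordered $4$-partitions of $V(G)$ with the prescribed sizes. The number of such partitions is $\binom{n}{n/2}\binom{n/2}{\delta n}^2 \le 2^{(1 + O(\delta \log(1/\delta)))n}$, and for $\delta$ sufficiently small the Chernoff exponent $\delta^{-1/3} n / 16$ comfortably dominates this entropy, giving $o(1)$. I do not expect a serious obstacle, since the natural fluctuation scale of $X$ is only $(\delta n^2)^{1/2} = \delta^{1/2} n$, which is far below the target $\delta^{1/3} n^{3/2}/4$; this leaves ample slack to absorb the $2^{O(n)}$ union bound. Structurally this is the same kind of fixed-partition Chernoff plus union-bound argument already used for \cref{lem:total-friendliness,lem:mostly-discrepancy,lem:small-set-bound}, merely with a different signed bilinear form in the edge indicators.
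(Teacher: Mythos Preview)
Your proof is correct and follows essentially the same approach as the paper: a Chernoff bound on the difference $\Delta_{A_1,B_1}-\Delta_{A,B}$ followed by a union bound over all choices of $(A,B,A',B')$. You spell out explicitly the identity $\Delta_{A_1,B_1}-\Delta_{A,B}=4\bigl[e(C,B')+e(D,A')-e(C,A')-e(D,B')\bigr]$ that the paper only invokes implicitly when it asserts this difference has a centered binomial distribution with $N=2(n/2-\delta n)\delta n$.
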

\begin{proof}
For each bisection $A\cup B$ and subsets $A'$ and $B'$ as in
the lemma statement, the random variable $\Delta_{A_1,B_1}-\Delta_{A,B}$
has a centered binomial distribution to which \cref{thm:binomial} applies (with $N=2(n/2-\delta n)\delta n$).
We then have 
\[
\Pr\left(|\Delta_{A_1,B_1}-\Delta_{A,B}|\ge \delta^{1/3} n^{3/2}\right)\le2\exp\left(-\frac{(\delta^{1/3} n^{3/2})^{2}}{4(n/2-\delta n)\delta n}\right)=o\left(2^{-n}\binom{n/2}{\delta n}^{-2}\right),
\]
so the desired result follows once again from the union bound.
\end{proof}

We are now ready to prove \cref{lem:swap-increment}.
\begin{proof}[Proof of \cref{lem:swap-increment}]
Let $\eta<\varepsilon/2$ be small enough for \cref{lem:mostly-discrepancy} to hold. Let $\alpha\in(0,\varepsilon/2)$ be small enough so that \cref{lem:small-set-bound} holds and \cref{lem:swap-no-damage} holds for $\delta=\alpha^4$, and also $\alpha\le4^{-3/\eta}$. Now assume that the properties in \cref{lem:mostly-discrepancy,lem:small-set-bound,lem:swap-no-damage} all hold for $G$ with these parameters, which occurs with high probability.

Now, consider an arbitrary bisection $A\cup B$ where at least $\varepsilon n$ vertices in $A$ are unfriendly and at least $\varepsilon n$ vertices in $B$ are unfriendly. Let $A'$ be the subset of the $\alpha n$ most unfriendly vertices in $A$, and let $B'\subseteq B$ be the subset of the $\alpha n$ most unfriendly vertices in $B$. By assumption, at least $\varepsilon n$ vertices in $A$ are unfriendly, so at least $(\varepsilon-\alpha)n\ge\eta n$ vertices in $A$ are unfriendly but not as unfriendly as the vertices in $A'$. By \cref{lem:mostly-discrepancy} we deduce that for all $v\in A'$ we have $\Delta_{A,B}(v)\le-4^{-1/\eta}\sqrt{n}$. Similarly, for all $v\in B'$ we have $\Delta_{A,B}(v)\le-4^{-1/\eta}\sqrt{n}$.

Next, let $A''=(A\setminus A')\cup B'$ and $B''=(B\setminus B')\cup A'$
be the parts resulting from the first step in an $\alpha$-swap. We know that $|\Delta_{A',B'}|\le\alpha^{4/3}n^{3/2}$
by \cref{lem:small-set-bound}, so we have
\begin{align*}
\Delta_{A'',B''} &= \Delta_{A,B} - 4\sum_{v\in A'\cup B'}\Delta_{A,B}(v) + 4\Delta_{A',B'}
\\
&\ge\Delta_{A,B}+4(2\alpha n)(4^{-1/\eta}\sqrt{n})-4\alpha^{4/3}n^{3/2}\ge\Delta_{A,B}+4\alpha 4^{-1/\eta}n^{3/2}
\end{align*}
Finally, by the guarantee in \cref{lem:swap-no-damage}, we note that the final random swap in the definition of the $\alpha$-swap procedure changes the friendliness of the bisection $A'' \cup B''$ by at most 
\[\delta^{1/3} n^{3/2}=\alpha^{4/3}n^{3/2}\le\alpha4^{-1/\eta}n^{3/2}\] in passing to the final bisection $A_1 \cup B_1$. It follows that we have the desired result with $\beta=3\alpha4^{-1/\eta}$.
\end{proof}

\section{Concentration of the iterated swapping process}\label{sec:concentration}
In this section we prove \cref{lem:symmetry}. In fact, it will follow from the more technical \cref{prop:empirical-induction}, which we shall shortly state and prove by induction. To get started, we need some definitions.

First, we introduce some notation to handle empirical distributions. Given a sequence $(a_i: i \in I)$, the \emph{uniform measure $\mc{\wh{L}}$} on this sequence is the probability distribution of $a_j$ where $j$ is chosen uniformly from $I$. When the sequence $(a_i: i \in I)$ is itself random --- for example, comprised of jointly random vectors --- we emphasise that the associated uniform measure $\mc{\wh{L}}$ is itself a random object, i.e., each realisation of the random sequence $(a_i: i \in I)$ gives rise to an associated uniform measure on this realisation.

We now define some empirical degree distributions associated with our iterated swapping process.
\begin{definition}\label{def:simple-iterated-swap}
Given a graph $G$ on the vertex set $\{1, \dots, n\}$, we consider the iterated swapping process in which we start with the bisection $A_0\cup B_0$, where $A_0=\{1,\dots,n/2\} $ and $B_0=\{n/2+1,\dots,n\} $, and repeatedly perform $\alpha$-swaps $k$ times to yield a sequence $(A_t \cup B_t)_{t= 0}^k$ of bisections. For a binary sequence $x = (x_t)_{t=1}^{k+1} \in\{ 0,1\}^{k+1}$, let $V_x$ be the set of vertices that are in part $A_t$ at those times $t$ with $x_{t-1}=0$, and in part $B_t$ at those times $t$ with $x_{t-1}=1$ for $1\le t\le k+1$. For a binary sequence $x\in\{0,1\}^{k+1}$, let $\mc{\wh{L}}_x$ be the uniform measure on the sequence of degree vectors
\[
\left(\left((\deg_{V_y}(v)-|V_y|/2)/\sqrt{n}\right)_{y\in\{0,1\}^{k+1}}:v\in V_x\right).
\]
\end{definition}

Next, we recall the definition of multidimensional Kolmogorov distance on $\mb{R}^d$.
\begin{definition}
Let $\mathcal L$ and $\mathcal L'$ be probability distributions on $\mb{R}^d$.
We define the \emph{Kolmogorov distance} $\operatorname d_{\mathrm{K}}(\mathcal L,\mathcal L')$ between $\mathcal L$ and $\mathcal L'$ to be the supremum of
$|\mathcal L(A)-\mathcal L'(A)|$ over all sets $A$ of the form $(-\infty,a_1]\times \dots\times (-\infty,a_d]$, where $a_1, \dots, a_d \in \mb{R}$.
\end{definition}

Note that the Kolmogorov distance controls the probability of lying in any half-open box: indeed, for any such box $B=(b_1,c_1]\times\dots\times(b_d,c_d]$, we can use the inclusion-exclusion principle to express $\mathcal L(B)$ as a signed sum of $2^d$ probabilities of the form $\mathcal L((-\infty,a_1]\times\dots\times(-\infty,a_d])$, so $|\mathcal L(B)-\mathcal L'(B)|\le 2^d\operatorname{d}_{\mathrm K}(\mc L,\mc L')$.

The promised generalisation of \cref{lem:symmetry} is now as follows.

\begin{proposition}\label{prop:empirical-induction}
Fix $\alpha\in(0,1/4)$ and $k\in\NN$. There are $c_{\alpha,k}, C_{\alpha,k} > 0$ such that for each $x\in\{0,1\}^{k+1}$ there are
\begin{enumerate}
    \item a $2^{k+1}$-dimensional probability distribution $\mathcal{L}_x$, and
    \item a real number $\pi_x\ge\alpha^{4k}/2$,
\end{enumerate}  
both of which may depend on $\alpha$ and $n$, such that the following holds. For $G\sim\GG(n,1/2)$, consider a sequence of $k$ iterated $\alpha$-swaps, and for $x\in\{0,1\}^{k+1}$, let $V_x$ and $\mc{\wh{L}}_x$ be as in \cref{def:simple-iterated-swap}. Then, with high probability, all of the following hold.
\begin{enumerate}[{\bfseries{A\arabic{enumi}}}]
\item\label{B1} For each $x\in\{0,1\}^{k+1}$, we have 
\[\left|\vphantom{\deg_{V_x}}|V_x|-\pi_x n\right|\le n^{1-c_{\alpha,k}}.\]
\item\label{B2} For each $x\in\{0,1\}^{k+1}$, we have \[\operatorname d_{\mathrm{K}}(\mc{\wh{L}}_x,{\mathcal{L}}_x) \le n^{-c_{\alpha,k}}.\]
\item\label{B3} For each vertex $v \in V(G)$ and each $x\in\{0,1\}^{k+1}$, we
have 
\[\left|{\deg_{V_x}(v)}-|V_x|/2\right|< C_{\alpha,k}\sqrt{n \log n}.\]
\item\label{B4} For each $x\in\{0,1\}^{k+1}$, and each box $B=\prod_{y\in\{0,1\}^{k+1}}(a_y,b_y]$ with side lengths $b_y-a_y=n^{-c_{\alpha,k}}$ (and, therefore, $\vol(B)=(n^{-c_{\alpha,k}})^{2^{k+1}}$) we have 
\[\mc{L}_x(B)\le \vol(B) \exp(C_{\alpha,k}\sqrt{\log n}).\]
\end{enumerate}
\end{proposition}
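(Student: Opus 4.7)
My approach is to induct on $k$. For the base case $k = 0$, the bisection $A_0 \cup B_0$ is fixed, so for each vertex $v$ the two-dimensional vector of normalised degrees into $A_0$ and $B_0$ is a sum of independent Bernoullis; we take $\pi_x = 1/2$ and let $\mc{L}_x$ be the corresponding centered, scaled product binomial. Properties A1 and A3 then follow from Chernoff's inequality with a union bound over vertices, A4 follows from the local central limit theorem bounding the density of $\mc{L}_x$ uniformly on compact sets, and A2 follows from a standard empirical process inequality applied to the $\approx n/2$ independent degree vectors, combined with a union bound over a polynomial-size net of the test sets defining Kolmogorov distance.

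For the inductive step, suppose A1--A4 hold at level $k$. Observe first that the deterministic part of the $(k+1)$-th $\alpha$-swap depends only on the degrees $(\deg_{V_y}(v))_{y \in \{0,1\}^{k+1}}$ for each $v$, which are precisely the data controlled by the inductive hypothesis. By A2 combined with the anti-concentration in A4, the quantile thresholds used to select the $\alpha n$ most unfriendly vertices on each side are $n^{-\Omega(1)}$-close to deterministic quantities defined from $\mc{L}_x$, so the partition of each $V_x$ into its refinements $V_{x\cdot 0}, V_{x\cdot 1}$ is, up to $o(n)$ misclassification, a measurable function of the revealed degrees. The subsequent uniformly random swap in the $\alpha$-swap definition reshuffles an $\alpha^4$ fraction of each side, which is precisely what guarantees the lower bound $\pi_{x'} \ge \alpha^{4(k+1)}/2$ for each of the $2^{k+2}$ new classes. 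We then reveal the refined degrees $\deg_{V_{x'}}(v)$; conditioned on the degrees at level $k$, the law of $G$ is uniform over graphs meeting those degree constraints, so revealing the refined degrees amounts to partitioning each constrained row of the adjacency matrix.

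The enumeration theorems of McKay--Wormald~\cite{MW90} and Canfield--Greenhill--McKay~\cite{CGM08} then provide precise single-vertex asymptotics for the probability that a given row refines in any prescribed way, up to a $(1+o(1))$ multiplicative factor. These asymptotics let us define $\pi_{x'}$ and $\mc{L}_{x'}$ at level $k+1$ as the corresponding first moments. Property A3 at level $k+1$ follows from the tail bounds implicit in these enumeration results together with a union bound, and property A4 at level $k+1$ follows from the explicit density formulas produced by the enumeration combined with A4 at level $k$, which ensures that the parent distributions are smooth enough for the refinement to remain smooth.

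The main obstacle is establishing A1 and A2 at level $k+1$: the empirical refined degree distribution must concentrate around $\mc{L}_{x'}$ in Kolmogorov distance. I would handle this via an abstract second moment argument, which rests on a two-vertex extension of the McKay--Wormald enumeration showing that the probability two distinct vertices simultaneously have prescribed refined degree profiles factors as $(1 + o(1))$ times the product of the single-vertex probabilities. Given this pairwise quasi-independence, Chebyshev's inequality controls the deviation of each empirical box-probability from its expectation, and a union bound over an $n^{-c_{\alpha,k+1}}$-fine net of half-open boxes in $\mb{R}^{2^{k+2}}$ yields A2; property A1 is the special case in which the box is a product of half-lines, one for each coordinate of $\mb{R}^{2^{k+2}}$. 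Throughout the induction, the most delicate issue is carefully tracking conditional dependencies through the staged revelations so that a suitable degree-constrained enumeration applies at every step, and ensuring A4 is quantitatively strong enough both to stabilise the swap thresholds in the previous paragraph and to keep the union bound over boxes tractable as the ambient dimension $2^{k+1}$ grows.
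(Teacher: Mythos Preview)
Your approach is essentially the paper's: induction on $k$, with the inductive step carried out by conditioning on the degree data $\mc{R}$ from level $k$, analysing the resulting degree-constrained random graph via the McKay--Wormald/Canfield--Greenhill--McKay enumeration, and establishing concentration of the empirical distributions through a pairwise-decorrelation second moment argument.

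Two points deserve more care. First, in the base case the degree vectors $(\vec d(v))_{v\in V_0}$ are \emph{not} independent: for $u,v\in V_0$ the statistics $\deg_{V_0}(u)$ and $\deg_{V_0}(v)$ share the edge $uv$. So A2 cannot come from a ``standard empirical process inequality applied to independent vectors''; one needs the same approximate-decorrelation-plus-Chebyshev argument you invoke for the inductive step. The dependence is weak and easily handled, but it is there.

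Second, and more substantively, in the inductive step you define $\mc{L}_{x'}$ as ``the corresponding first moments'' coming from the enumeration, but these first moments are computed \emph{after} conditioning on a specific outcome of $\mc{R}$ (and of the random swap $\mc{S}$). A priori they depend on that outcome, so what you actually obtain is a family $\mc{L}_x^{\mc{R},\mc{S}}$ rather than a single $\mc{L}_x$. The paper devotes an entire subsection to proving that for any two well-behaved outcomes one has $\operatorname{d}_{\mathrm K}(\mc{L}_x^{\mc{R},\mc{S}},\mc{L}_x^{\mc{R}',\mc{S}'})\le n^{-\Omega(1)}$; this requires a continuity property of the enumeration formulas under perturbations of the degree sequence measured in Kolmogorov distance (not just $\ell^1$), and is what ultimately permits one to fix a single deterministic $\mc{L}_x$. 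Your sketch does not address this step, and without it the statement as written (with $\mc{L}_x$ depending only on $\alpha$ and $n$) is not yet proved.
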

Again, we emphasise that we treat $\alpha$ and $k$ as fixed constants for the purpose of the `with high probability' statement in the above proposition; in particular, \cref{prop:empirical-induction} only holds if $n$ grows sufficiently fast (with respect to $\alpha$ and $k$).

Before discussing the proof of \cref{prop:empirical-induction}, we explain how it implies \cref{lem:symmetry}. The key observation is that \cref{B1,B2,B3,B4} essentially allow us to read off, from the distributions $\mc L_x$, arbitrary information about degree statistics (and, in particular, the number of friendly vertices in each part). We will need the following lemma.

\begin{lemma}\label{lem:halfspace-simple}
Suppose that $G$ is such that \cref{B2,B3,B4} are satisfied, and let $H\subseteq \RR^{\{0,1\}^{k+1}}$ be any closed half-space (i.e., a region bounded by a hyperplane). Then for any $x\in \{0,1\}^{k+1}$, we have $\wh{\mc L}_x(H)=\mc L_x(H)+o(1)$.
\end{lemma}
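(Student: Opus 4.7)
The plan is to approximate the half-space $H$ by a grid of small $d$-dimensional cubes (writing $d := 2^{k+1}$), splitting the approximation error into an interior contribution controlled by box-wise application of \cref{B2} and a boundary contribution controlled by the anti-concentration estimate \cref{B4}.

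First, I would reduce to a bounded region. By \cref{B3}, the empirical measure $\wh{\mc L}_x$ is supported in $R := [-R_0, R_0]^d$ with $R_0 := C_{\alpha,k}\sqrt{\log n}$. Applying to the single box $R$ the inclusion–exclusion consequence of \cref{B2} noted immediately after the definition of Kolmogorov distance yields $|\wh{\mc L}_x(R)-\mc L_x(R)|\le 2^d n^{-c_{\alpha,k}}$, so $\mc L_x(R)=1-o(1)$ as well. It therefore suffices to prove $\wh{\mc L}_x(H\cap R)=\mc L_x(H\cap R)+o(1)$.

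Next, fix a grid scale $s$ (to be chosen) and tile $R$ by disjoint half-open cubes of side $s$, separating these cubes into those $\mc I$ contained entirely in $H$ and those $\mc B$ crossing the bounding hyperplane $\partial H$. Trivially $|\mc I|\le (2R_0/s)^d$, and a standard surface-area count for an affine hyperplane intersected with a box of diameter $O_d(R_0)$ gives $|\mc B|=O_d((R_0/s)^{d-1})$. On each cube $Q$, the box form of \cref{B2} gives $|\wh{\mc L}_x(Q)-\mc L_x(Q)|\le 2^d n^{-c_{\alpha,k}}$, while \cref{B4} — extended to cubes of side $s\ge n^{-c_{\alpha,k}}$ by covering $Q$ with $O_d((s n^{c_{\alpha,k}})^d)$ elementary \cref{B4}-cubes — yields $\mc L_x(Q)=O_d(s^d\exp(C_{\alpha,k}\sqrt{\log n}))$. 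Summing the \cref{B2} bound over $\mc I$ contributes interior aggregation error $O_d((R_0/s)^d n^{-c_{\alpha,k}})$ to $|\wh{\mc L}_x(H\cap R)-\mc L_x(H\cap R)|$; meanwhile, the boundary layer $\bigcup \mc B$ has $\mc L_x$-measure at most $O_d(R_0^{d-1} s \exp(C_{\alpha,k}\sqrt{\log n}))$, and its $\wh{\mc L}_x$-measure exceeds this by at most $|\mc B|\cdot 2^d n^{-c_{\alpha,k}}$.

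Finally, setting $s := n^{-c_{\alpha,k}/(2d)}$ renders the interior error $O_d((\log n)^{d/2} n^{-c_{\alpha,k}/2})$ and the boundary contributions $O_d((\log n)^{(d-1)/2} n^{-c_{\alpha,k}/(2d)} e^{C_{\alpha,k}\sqrt{\log n}})$; both are $o(1)$, the latter crucially because $e^{C_{\alpha,k}\sqrt{\log n}}$ is subpolynomial. Summing the three sources of error proves the lemma. The only real obstacle is this balancing act: $s$ must be small enough that the $(d-1)$-dimensional boundary strip, weighted by the \cref{B4} density cap, is negligible, yet large enough that aggregating the polynomially small \cref{B2} error over $\sim s^{-d}$ interior cubes stays $o(1)$. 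Such a window exists precisely because \cref{B4}'s volume bound is inflated only by the subpolynomial factor $e^{C_{\alpha,k}\sqrt{\log n}}$, whereas \cref{B2} furnishes a polynomial error $n^{-c_{\alpha,k}}$.
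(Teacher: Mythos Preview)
Your proposal is correct and follows essentially the same approach as the paper. The paper packages this argument as the more general \cref{lem:halfspace}, partitioning a box $Q=(-q,q]^d$ into boxes of side length $D=n^{-c/(2d)}$, separating those fully inside $H$ from those meeting the boundary, and bounding the error by $O(|\mf B_+\setminus\mf B_-|\,D^d q + |\mf B|\,n^{-c} + \mc L(Q^c))$; your tiling with $s=n^{-c_{\alpha,k}/(2d)}$, your split into $\mc I$ and $\mc B$, and your balance between the interior \cref{B2}-error and the boundary \cref{B4}-mass are the same computation with the same scale.
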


We defer the proof of \cref{lem:halfspace-simple} (in a slightly stronger form, see \cref{lem:halfspace}) to \cref{sub:preliminaries-inductive-step}; we now deduce \cref{lem:symmetry} from \cref{prop:empirical-induction,lem:halfspace-simple}.
\begin{proof}[Proof of \cref{lem:symmetry}]
Let $A_k\cup B_k$ be the bisection resulting from $k$ iterations of the $\alpha$-swap process. Recall that in the statement of \cref{lem:symmetry}, the random variables $X$ and $Y$ are the numbers of unfriendly vertices in $A_k$ and $B_k$. It suffices to prove that there is some value $N$ (potentially depending on all of $\alpha,k,n$) such that $X=N+o(n)$ with high probability. Indeed, by symmetry it would follow that $Y=N+o(n)$ with high probability as well, implying that $|X-Y|=o(n)$ with high probability, as desired.

To this end, for $i\in \{0,1\}$, let $S_i=\{x\in \{0,1\}^{k+1}:x_{k+1}=i\}$ and note that a vertex $v\in A_k$ is unfriendly if and only if
\[\sum_{y\in S_0}\deg_{V_y}(v)-\sum_{y\in S_1}\deg_{V_y}(v)=\sum_{y\in S_0}\left(\deg_{V_y}(v)-|V_y|/2\right)-\sum_{y\in S_1}\left(\deg_{V_y}(v)-|V_y|/2\right)\le 0.\]
So, defining the affine half-space
\[H=\left\{d\in \RR^{\{0,1\}^{k+1}}:\sum_{y\in S_0}d_y-\sum_{y\in S_1}d_y\le 0\right\},\]
we have $X=\sum_{x\in S_0}|V_x|\wh{\mc L}_x(H)$.
By \cref{prop:empirical-induction} and \cref{lem:halfspace-simple}, with high probability we have $X=n\sum_{x\in S_0}\pi_x \mc L_x(H)+o(n)$, as desired.
\end{proof}

We will prove \cref{prop:empirical-induction} by induction on $k$. In its full generality, our argument will rely on a second moment computation that utilises results of McKay--Wormald~\cite{MW90} and Canfield--Greenhill--McKay~\cite{CGM08} about enumerating graphs with specified vertex-degrees. Since the argument is rather technical, we shall proceed slowly, first illustrating the base case before jumping into the meat of the argument.

\subsection{The base case}\label{sub:base-case}
In this subsection we prove \cref{prop:empirical-induction} for $k = 0$. This entails some explicit calculations in the random graph $\mb{G}(n,1/2)$; the inductive step can be seen as a `relativised' version of this argument, with the randomness coming from a well-conditioned random graph with specified degree information rather than $\mb{G}(n,1/2)$.

Recall that we need to prove that the four properties in \cref{B1,B2,B3,B4} each hold with high probability. The most interesting of these properties is \cref{B2}, which will be established using the following lemma.

\begin{lemma}\label{lem:empirical-boxes}
Fix $c>0$ and $d\in \NN$. Let $(\vec d(v))_{v\in V}$ be a sequence of $n$ discrete jointly random vectors in $\RR^d$, and let $\mc L$ be the (fixed) distribution on $\mb{R}^d$ defined by choosing $v$ uniformly at random from $V$ and then sampling from $\vec d(v)$. Suppose that for a box $Q=(-q,q]^d$ with $q\ge 1$, the following conditions hold:
\begin{enumerate}
    \item\label{leb1} for each $\vec s,\vec t\in Q$ and each $u,v\in V$, we have
    \[\Pr(\vec d(u)=\vec t \text{ and } \vec d(v)=\vec s)=(1\pm n^{-c})\Pr(\vec d(u)=\vec t)\Pr(\vec d(v)=\vec s),\]    
    \item\label{leb2} $\mc L(Q^c)\le n^{-c}$, and
    \item\label{leb3} for each box $B\subseteq Q$ with side lengths at least $n^{-c}$, we have $\mc L(B)\le q \vol(B)$.
\end{enumerate}
For a given realisation of the random sequence $(\vec d(v))_{v\in V}$, let $\wh{\mc L}$ be the (random) distribution on $\mb{R}^d$ which is the uniform measure on this realisation. With probability at least $1-O(q^dn^{-c/8})$ over the randomness of $(\vec d(v))_{v\in V}$, we have $\operatorname{d}_{\mathrm K}(\mc L,\wh{\mc L}) = O(q^dn^{-c/(8d)})$.
\end{lemma}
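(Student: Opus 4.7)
The plan is to combine a second-moment argument over a discretized grid with the anti-concentration hypothesis to handle orthants off the grid. Since Kolmogorov distance is controlled by orthants $H_a = (-\infty,a_1]\times\cdots\times(-\infty,a_d]$, it suffices to (i) control $\wh{\mc L}(H_g)$ on a sufficiently fine grid of points $g$ via Chebyshev plus a union bound, and (ii) sandwich arbitrary orthants between nearby grid orthants, bounding the discretization gap through the spread-out hypothesis \cref{leb3}.

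First I would fix a grid spacing $\delta = n^{-c/(8d)}$ (which satisfies $\delta \ge n^{-c}$, so that \cref{leb3} applies to boxes of side $\delta$) and consider the grid $G \subseteq \{-q,-q+\delta,\ldots,q\}^d$, whose size is $|G| = O((q/\delta)^d) = O(q^d n^{c/8})$. For each grid orthant $H_g$, $\E[\wh{\mc L}(H_g)] = \mc L(H_g)$ by linearity. To bound the variance, I would expand
\[\Var(\wh{\mc L}(H_g)) = \frac{1}{n^2}\sum_{u,v \in V} \operatorname{Cov}\bigl(\mathbbm 1[\vec d(u)\in H_g],\mathbbm 1[\vec d(v)\in H_g]\bigr).\]
The diagonal contributes at most $1/n$. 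For off-diagonal terms, I would split according to whether both $\vec d(u), \vec d(v)$ land in $Q$---in which case \cref{leb1} gives a covariance bounded by $O(n^{-c})$---or at least one escapes to $Q^c$, in which case \cref{leb2} controls $\sum_u \Pr[\vec d(u) \in Q^c] \le n^{1-c}$. Combining these yields $\Var(\wh{\mc L}(H_g)) = O(n^{-c})$. The same argument gives $\Var(\wh{\mc L}(Q^c)) = O(n^{-c})$, so Chebyshev yields $\wh{\mc L}(Q^c) = O(n^{-c/2})$ with high probability.

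Chebyshev with $t = n^{-3c/8}$ then gives $|\wh{\mc L}(H_g) - \mc L(H_g)| \le t$ for each grid orthant with failure probability $O(n^{-c/4})$, and a union bound over $|G|$ grid orthants drives the total failure probability to $O(q^d n^{-c/8})$ as claimed. To extend to an arbitrary orthant $H_a$, I would round $a$ componentwise to nearest grid points $\underline a \le a \le \bar a$, obtaining $H_{\underline a} \subseteq H_a \subseteq H_{\bar a}$. The symmetric difference $H_{\bar a} \setminus H_{\underline a}$ is contained in the union of $d$ axis-aligned slabs, each intersecting $Q$ in a box of volume $(2q)^{d-1}\delta$; \cref{leb3} bounds its $\mc L$-measure by $q(2q)^{d-1}\delta = O(q^d\delta) = O(q^d n^{-c/(8d)})$. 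For $a$ with any coordinate outside $[-q,q]$, truncation costs only $\mc L(Q^c) + \wh{\mc L}(Q^c) = O(n^{-c/2})$, which is dominated by the discretization term.

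The main obstacle is calibrating the three error sources---the Chebyshev deviation $t$, the discretization $q^d\delta$, and the $Q^c$ tail---against the $|G| = O(q^d n^{c/8})$ union bound so that all four land at a common rate. The choice $\delta = n^{-c/(8d)}$, $t = n^{-3c/8}$ equilibrates these terms and produces the claimed Kolmogorov bound $O(q^d n^{-c/(8d)})$ with failure probability $O(q^d n^{-c/8})$.
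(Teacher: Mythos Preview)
Your proposal is correct and follows essentially the same approach as the paper: a second-moment (Chebyshev) bound over a discretization of scale $n^{-c/(8d)}$, combined with the anti-concentration hypothesis \cref{leb3} to control the discretization error and the tail bound \cref{leb2} for $Q^c$. The only organizational difference is that the paper partitions $Q$ into small boxes and applies Chebyshev to those (so the variance bound uses \cref{leb1} directly, without any $Q^c$ splitting), whereas you apply Chebyshev directly to grid orthants $H_g$ (requiring the extra $Q\,/\,Q^c$ decomposition in the covariance estimate); both routes yield the same bounds with the same parameter choices.
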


In applications, $\vec d(v)$ will be a list of degrees from $v$ to a number of other fixed subsets, and $(\vec d(v))_{v\in V}$ will be the random ensemble of these lists. The above lemma roughly states that given decorrelation between these degree statistics, and (for technical reasons) a tail bound and anti-concentration, the empirical degree distribution of $V$ is very likely to concentrate around an explicit distribution.

Here, we again reiterate that the constants suppressed by the asymptotic notation in \cref{lem:empirical-boxes} are allowed to depend on the fixed parameters $c$ and $d$.
\begin{proof}[Proof of \cref{lem:empirical-boxes}]
For any $v\in V$, and any box $B$, let $\mc E_{v,B}$ be the event that $\vec d(v)$ lies in $B$, so that $n\mc{\wh{L}}(B)$ is the number of $v\in V$ such that $\mc E_{v,B}$ holds. For $u,v\in V$ and boxes $B,B'\subseteq Q$, we can sum the bound in~\eqref{leb1} over all the points $\vec t\in B$ and $\vec s\in B'$ to see that 
\[\Pr(\mc E_{u,B}\cap \mc E_{v,B'})=\Pr(\mc E_{u,B})\Pr(\mc E_{v,B'})\pm n^{-c}.\] 
It follows that $\Var(n \mc{\wh{L}}(B))\le n+n^{2-c}\le 2n^{2-c}$, so by Chebyshev's inequality, with probability at least $1-n^{-c/4}$, we have
\begin{equation}
    \left|\wh{\mc L}(B)-\E[\wh{\mc L}(B)]\right|=\left|\wh{\mc L}(B)-{\mc L}(B)\right|\le 2n^{-c/4}.\label{eq:discretised-estimate}
\end{equation}
Now, consider a family $\mf{B}$ of $O(n^{c/8}q^d)$ half-open boxes with side lengths at most $D=n^{-c/(8d)}$ that partition the (big) box $Q$. By the union bound, with probability $1-O(q^dn^{-c/8})$, the bound \cref{eq:discretised-estimate} holds for all $B\in \mf{B}$. Also, since $\E [\wh{\mc L}(Q^c)]={\mc L}(Q^c)\le n^{-c}$, by Markov's inequality we have $\wh{\mc L}(Q^c)\le n^{-c/2}$ with probability at least $1-n^{-c/2}$. Now, it is a routine matter to deduce the desired conclusion from these two facts. The details are as follows.

For any semi-infinite box $A=(-\infty,a_1]\times \dots\times (-\infty,a_d]$, we can find subcollections $\mf{B}_-,\mf{B}_+\subseteq \mf{B}$ such that \[\bigcup_{B\in \mf{B}_-}B\subseteq A\cap Q\subseteq \bigcup_{B\in \mf{B}_+}B,\]
and $|\mf{B}_+\setminus \mf{B}_-|=O((q/D)^{d-1})$. Then
\[
\sum_{B\in \mf{B}_-}\mc{\wh{L}}(B)\le \mc{\wh{L}}(A\cap Q)\le \sum_{B\in \mf{B}_+}\mc{\wh{L}}(B).
\]
Furthermore, using~\eqref{leb3} and \cref{eq:discretised-estimate} for all $B\in \mf B$, we see that both the sum $\sum_{B\in \mf{B}_-}\mc{\wh{L}}(B)$ and the sum $\sum_{B\in \mf{B}_+}\mc{\wh{L}}(B)$ differ from $\mathcal L(A\cap Q)$ by at most
\[O\left(|\mf{B}_+\setminus \mf{B}_-|(qD^d)+|\mf{B}|(2n^{-c/4})\right)=O\left(q^dn^{-c/(8d)}\right).\]
So, we have
\[\left|\mathcal L(A)-\mc{\wh{L}}(A)\right|=O\left(\mc L(Q^c)+\wh{\mc L}(Q^c)+q^dn^{-c/(8d)}\right)=O\left(q^dn^{-c/(8d)}\right),\]
proving the lemma.
\end{proof}

Now we use \cref{lem:empirical-boxes} to prove the base case of \cref{prop:empirical-induction}.

\begin{proof}[Proof of the $k=0$ case of \cref{prop:empirical-induction}]
First, we have $|V_0| = |A_0| = |V_1| = |B_0| = n/2$, proving \cref{B1}. Furthermore, for a sufficiently large $C_{\alpha,k}>0$, given a vertex $v$, we have $|{\deg_{V_i} (v)}-n/4|< C_{\alpha,k}n^{1/2}\sqrt{\log n}$ with probability at least $1-1/n^2$, say, just by the Chernoff bound, whence a union bound demonstrates \cref{B3}.

It remains to prove \cref{B2,B4}. It is enough to prove them for $x=(0)$, by symmetry. We will take $\mathcal L_0$ to be the distribution of the random vector 
\[\vec d(v) = \left(|{\deg_{V_0}(v)}-n/4|/\sqrt n,\;|{\deg_{V_1}(v)}-n/4|/\sqrt n\right),\] where $v\in V_0$ is arbitrary; clearly, this distribution does not actually depend on the specific choice of $v \in V_0$. Then, $\mathcal L_0$ has a simple description in terms of independent binomial distributions. Although it will not be necessary for the proof, we remark that $\mathcal L_0$ is well-approximated by the bivariate normal distribution $N(0,1/2)^2$, and it is possible to take $\mathcal L_0$ to be this distribution as well.

Before proceeding further, we note that the aforementioned Chernoff bound shows that with $Q=(-C_{\alpha,k}\sqrt {\log n},C_{\alpha,k}\sqrt {\log n}]^2$, we have $\mc L_0(Q^c)\le 2/n^2$. Now, for every individual point $\vec d\in \RR^2$, we have $\mathcal L_0(\{{\vec d}\})=O((1/\sqrt n)^2)=O(1/n)$ (by the Erd\H os--Littlewood--Offord theorem applied to each coordinate, say). Since $\mc L_0$ is supported on the lattice $((\mb Z-n/4)/\sqrt n)^2$, for a box $B$ with side lengths at least $1/\sqrt n$, we have
\begin{equation}
    \mathcal L_0(B) = O\left(\vol(B)\right),\label{eq:gaussian-anticoncentration}
\end{equation}
establishing \cref{B4}. Now, we claim that for every pair of vertices $u,v$ and every pair of points $\vec s,\vec t\in Q$, we have
\[\Pr(\vec d(u)=\vec t\text{ and }\vec d(v)=\vec s)=(1 \pm O(\sqrt{\log n/n}))\Pr(\vec d(u)=\vec t)\Pr(\vec d(v)=\vec s).\]
Indeed, we will then be able to apply \cref{lem:empirical-boxes} to establish that \cref{B2} holds with high probability. The claim follows from the following explicit calculation. The only dependence between $\vec d(u)$ and $\vec d(v)$ comes from the potential edge between $u$ and $v$, but we can check that if we condition on this edge being present (or not), the probabilities $\Pr(\vec d(u)=\vec t)$ and $\Pr(\vec d(v)=\vec s)$ vary only by a factor of $(1\pm O(\sqrt{\log n/n}))$, which in itself boils down to the observation that $\binom{n/2-1}{t}/\binom{n/2-1}{t-1}=(n/2-t)/t=1+O(1/4-t/n)$.
\end{proof}

\subsection{Preliminaries for the inductive step}\label{sub:preliminaries-inductive-step}
We start with some preparations before proceeding to the details of the inductive step. First, we provide a proof of \cref{lem:halfspace-simple}; actually we prove the following more general lemma.
\begin{lemma}\label{lem:halfspace}
For fixed $c>0,d\in \NN$ and any $q\ge 1$, let $\mc L,{\mc L}'$ be probability distributions on $\RR^d$ satisfying $\operatorname d_{\mathrm K}(\mc L,{\mc L}')\le n^{-c}$, ${\mc L}'\left((-q,q]^d\right)=1$, and $\mc L(B)\le q\vol(B)$ for all boxes $B$ with side lengths at least $n^{-c}$. Then the following conclusions hold.
\begin{enumerate}
    \item For any region $H\subseteq \RR^d$ defined as the intersection of $O(1)$ (closed or open) affine half-spaces, we have ${\mc L}'(H) = \mc L(H) \pm  O(q^dn^{-c/(2d)})$.
    \item For any $R\subseteq \RR^d$ obtained as the region between two parallel (closed or open) affine hyperplanes separated by a distance of at most $n^{-c}$, we have $\mc L(R)= O(q^dn^{-c/(2d)})$.
\end{enumerate}
\end{lemma}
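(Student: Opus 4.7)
My plan is to establish (2) first by a direct covering argument exploiting the anti-concentration hypothesis, and then to use (2) to control the boundary error in a discretisation scheme for (1).

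For (2), consider a slab $R = \{x : |v \cdot x - s| \le w/2\}$ with unit normal $v$ and width $w \le n^{-c}$. A standard application of the Kolmogorov bound (using the orthant limits $\{x_j \le -q\}$ and the complements $\{x_j > q\}$) shows that $\mc L$ places mass at most $O_d(n^{-c})$ outside the cube $(-q,q]^d$, so it suffices to bound $\mc L(R \cap (-q,q]^d)$. Pick a coordinate $i$ with $|v_i| \ge 1/\sqrt d$, and partition the remaining $d-1$ coordinates into a grid of $O_d((qn^c)^{d-1})$ cells of side $n^{-c}$. Within each such cell, the slab constraint, together with the variation of the fixed coordinates inside the cell, restricts $x_i$ to an interval of length $O_d(n^{-c})$. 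Hence $R \cap (-q,q]^d$ is covered by $O_d((qn^c)^{d-1})$ axis-aligned boxes, each with all side lengths at least $n^{-c}$ and volume $O_d(n^{-cd})$. The anti-concentration hypothesis then yields $\mc L(R) = O_d(q^d n^{-c})$, well within the claimed bound.

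For (1), take first a single half-space $H = \{v \cdot x \le s\}$. Fix a mesh $\eta \in [n^{-c},1]$ (to be optimised later) and partition $(-q,q]^d$ into $O_d((q/\eta)^d)$ axis-aligned cubes of side $\eta$. Classify cubes as \emph{interior}, \emph{exterior}, or \emph{boundary} according to whether they are contained in $H$, in $H^c$, or meet $\partial H$. For an interior or exterior cube $B$, inclusion-exclusion writes $\mc L(B)$ and $\mc L'(B)$ as signed sums of $2^d$ orthant probabilities, so $|\mc L(B) - \mc L'(B)| \le 2^d n^{-c}$; summing gives a total Kolmogorov error of $O_d((q/\eta)^d n^{-c})$. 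The boundary cubes lie in a slab of width $O_d(\eta)$ around $\partial H$, and applying the argument of (2) to this slab (with $\eta$ in place of $n^{-c}$) bounds their combined $\mc L$- and $\mc L'$-mass by $O_d(q^d \eta)$. The total error is therefore $O_d((q/\eta)^d n^{-c} + q^d \eta)$; optimising at $\eta = n^{-c/(d+1)}$ yields $O_d(q^d n^{-c/(d+1)})$, which is stronger than the claimed $O(q^d n^{-c/(2d)})$.

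For an intersection $H = H_1 \cap \cdots \cap H_r$ of $r = O(1)$ half-spaces, the same discretisation works verbatim: the boundary of $H$ is contained in the union of the $r$ individual boundary slabs, contributing $O_{d,r}(q^d \eta)$ in total, while the interior/exterior cubes are handled as before. I do not anticipate any genuine obstacle; the only mildly delicate point is that the cube cover in (2) must respect the `side length at least $n^{-c}$' proviso, which is precisely why one picks the coordinate $i$ with $|v_i| \ge 1/\sqrt d$ rather than an arbitrary direction.
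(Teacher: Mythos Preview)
Your argument is correct and follows essentially the same box-discretisation scheme as the paper: partition $Q=(-q,q]^d$ into small axis-aligned cells, bound the boundary contribution via the anti-concentration hypothesis, and bound the interior contribution via the Kolmogorov hypothesis. The paper simply fixes a single scale $D=n^{-c/(2d)}$ up front and treats both parts with that one partition, whereas you prove (2) first at scale $n^{-c}$ and then reuse it (at scale $\eta$) as a black box inside (1), optimising $\eta$ to obtain the slightly sharper exponent $c/(d+1)$.

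One small point to tighten: when you write that the argument of (2) ``bounds their combined $\mathcal L$- and $\mathcal L'$-mass by $O_d(q^d\eta)$,'' note that the anti-concentration hypothesis applies only to $\mathcal L$, not to $\mathcal L'$. To bound the $\mathcal L'$-mass of the boundary slab you must transfer via the Kolmogorov bound: each of the $O_d((q/\eta)^{d-1})$ covering boxes $B$ satisfies $\mathcal L'(B)\le \mathcal L(B)+2^d n^{-c}$, yielding an extra $O_d((q/\eta)^{d-1}n^{-c})$ term, which is in any case dominated by the Kolmogorov error $O_d((q/\eta)^d n^{-c})$ you have already accounted for. With that remark made explicit, your proof is complete.
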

Here, the constants suppressed by the asymptotic notation in \cref{lem:halfspace} are allowed to depend on the fixed parameters $c$ and $d$.

\begin{proof}[Proof of \cref{lem:halfspace}]
Let $Q=(-q,q]^d$, and note that $\mc L(Q)={\mc L}'(Q) \pm O(n^{-c})=1 - O(n^{-c})$. As in the proof of the base case of \cref{prop:empirical-induction} (in \cref{sub:base-case}), we consider a family $\mf{B}$ of $O(q^dn^{c/2})$ half-open boxes with side lengths at most $D=n^{-c/(2d)}$ that partition $Q$.

For the first point, let $\mf B_+\subseteq \mf B$ be the subcollection of boxes which intersect $H$, and let $\mf B_-\subseteq \mf B$ be the subcollection of boxes fully included in $H$, so that $|\mf{B}_+\setminus \mf{B}_-|=O((q/D)^{d-1})$. We then observe that $|{\mc L}'(H)-{\mc L}(H)|$ is bounded by
\[O\left(|\mf{B}_+\setminus \mf{B}_-|D^d q+|\mf{B}|n^{-c}+\mc L(Q^c)\right)=O\left(q^dn^{-c/(2d)}\right).\]

For the second part, let $\mf B_+$ be the subcollection of boxes that intersect $R$, so $|\mf{B}_+|=O((q/D)^{d-1})$. We similarly observe that \[\mc L(R)=O\left(|\mf{B}_+|(D^dq+n^{-c})+\mc L(Q^c)\right)=O\left(q^dn^{-c/(2d)}\right). \qedhere\]
\end{proof}

Second, we isolate the part of the proof of \cref{lem:empirical-boxes} in which we approximated Kolmogorov distance via small boxes.

\begin{lemma}\label{lem:box-kolmogorov}
For fixed $c>0$ and $d\in\NN$, there exists a $c' = c'(c,d) >0$ for which the following holds. Let $\mc L,\mc L'$ be probability distributions on $\RR^d$, where $\mc L'$ is (possibly) a random object. Let $Q=(-q,q]^d\subseteq \RR^d$ be a box for $q\ge 1$, and let $\mf B$ be a partition of it into at most $q^d n^{c/2}$ boxes with side lengths at most $n^{-c/(2d)}$. Suppose the following conditions are satisfied.
\begin{enumerate}
    \item For each $B\in \mf B$, we have $|\mc L'(B)-\mc L(B)|\le n^{-c}$ with probability at least $1-n^{-c}$.
    \item $\mc L(Q^c) \le n^{-c}$, and $\mc L'(Q^c)\le n^{-c}$ with probability at least $1 - n^{-c}$.
    \item For each box $B\in \mf B$ with side lengths at least $n^{-c}$, we have $\mc L(B)\le q \vol(B)$.
\end{enumerate}
Then, with high probability, we have $\operatorname{d}_{\mathrm K}(\mc L,\mc L')\le q^dn^{-c'}$. \qed
\end{lemma}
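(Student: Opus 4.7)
The lemma abstracts the second half of the proof of \cref{lem:empirical-boxes}, so the strategy there carries over directly; I will show that $c' = c/(4d)$ works.

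First, I would set up the good event via a union bound. Taking a union bound over the $|\mf{B}| \le q^d n^{c/2}$ boxes using condition~(1), together with condition~(2), produces an event of probability at least $1 - O(q^d n^{-c/2})$ on which simultaneously $|\mc L'(B) - \mc L(B)| \le n^{-c}$ for every $B \in \mf B$ and $\mc L'(Q^c) \le n^{-c}$. All subsequent arguments condition on this good event, and this is also the source of the ``with high probability'' conclusion (assuming $q^d = o(n^{c/2})$, as is implicit in the stated Kolmogorov bound being non-trivial).

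Next, I would reduce the Kolmogorov bound to a discretization problem. Fix an arbitrary semi-infinite box $A=(-\infty,a_1]\times\dots\times(-\infty,a_d]$. Using the tail bounds, $|\mc L(A) - \mc L'(A)|$ agrees with $|\mc L(A\cap Q) - \mc L'(A\cap Q)|$ up to $O(n^{-c})$. Mimicking the proof of \cref{lem:empirical-boxes}, I would choose subcollections $\mf B_- \subseteq \mf B_+ \subseteq \mf B$ with $\bigcup \mf B_- \subseteq A \cap Q \subseteq \bigcup \mf B_+$ and $|\mf B_+ \setminus \mf B_-| = O((q/D)^{d-1})$, where $D = n^{-c/(2d)}$ is the maximum side length in $\mf B$ (one can take all boxes in $\mf B$ to have this common side length, which automatically satisfies $D \ge n^{-c}$, so condition~(3) applies uniformly). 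Both $\sum_{B \in \mf B_\pm} \mc L(B)$ and $\sum_{B \in \mf B_\pm} \mc L'(B)$ sandwich the corresponding measures of $A \cap Q$.

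Finally, I would combine the error terms. Condition~(3) gives
\[
\mc L\bigl(\textstyle\bigcup \mf B_+ \setminus \bigcup \mf B_-\bigr) \le |\mf B_+ \setminus \mf B_-| \cdot q D^d = O(q^d D) = O(q^d n^{-c/(2d)}),
\]
which controls the discretization error for $\mc L$. On the good event, the per-box estimate from condition~(1) summed over $\mf B$ gives $|\sum_{B \in \mf B_\pm} \mc L(B) - \sum_{B \in \mf B_\pm} \mc L'(B)| \le |\mf B| \cdot n^{-c} = O(q^d n^{-c/2})$. Chaining these bounds produces $|\mc L(A) - \mc L'(A)| = O(q^d n^{-c/(2d)})$, uniformly in $A$, whence $c' = c/(4d)$ suffices. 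Since every ingredient already appears in the proof of \cref{lem:empirical-boxes}, there is no conceptual obstacle; the only delicate point is verifying that the boxes in $\mf B$ can be taken large enough for condition~(3) to apply to all of them, which is guaranteed by $n^{-c/(2d)} \ge n^{-c}$ (as $2d \ge 1$).
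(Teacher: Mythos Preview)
Your proof is correct and follows exactly the approach the paper intends: the paper omits the proof entirely (note the bare \qed), stating only that the lemma isolates the relevant portion of the argument for \cref{lem:empirical-boxes}. The one step that is not literally justified is the assertion that ``one can take all boxes in $\mf B$ to have this common side length'': since $\mf B$ is part of the hypotheses, both your count $|\mf B_+\setminus\mf B_-|=O((q/D)^{d-1})$ and the blanket applicability of condition~(3) tacitly assume a regular grid of mesh $D=n^{-c/(2d)}$, but this is harmless because every invocation of the lemma in the paper uses exactly such a grid.
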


We will also need some lemmas for working with random graphs with constrained degree sequences. These lemmas will be deduced from powerful enumeration theorems due to McKay and Wormald~\cite{MW90} and Canfield, Greenhill, and McKay~\cite{CGM08}. Before stating these lemmas, we define a notion of `closeness' between two degree sequences. This definition is chosen to be convenient for the proof of \cref{prop:empirical-induction}; it has two cases which will both arise in different parts of the proof.

\begin{definition}\label{def:close}
Consider a pair of sequences $(a(v))_{v\in V}$ and $(b(w))_{w\in W}$. Let $\wh{\mc A},\wh{\mc B}$ be the uniform measures on these sequences (obtained by choosing a random element of each of these sequences). We say that $(a(v))_{v\in V}$ and $(b(w))_{w\in W}$ are \emph{proximate} if at least one of the following two conditions holds.
\begin{enumerate}
    \item There is a bijection $\psi:V\to W$ such that $\sum_{v\in V} |a(v)-b(\psi(v))| = O(|V|)$.
    \item $\left||V|-|W|\right|\le n^{1-\Omega(1)}$ and $\operatorname{d}_{\mathrm K}(\wh{\mc A},\wh{\mc B})\le n^{-\Omega(1)}$.
\end{enumerate}
\end{definition}

We are now ready to state the promised pair of lemmas. We defer the details of their proofs to \cref{app:MW}. The first of these lemmas is for the non-bipartite setting. Recall that $\simeq$ means equality up to a multiplicative factor $(1\pm n^{-\Omega(1)})$.

\begin{lemma}\label{lem:enum-consequence}
Let $(d_w)_{w\in W}$ be a sequence with even sum on a set $W$ of $n$ vertices such that 
\begin{itemize}
\item $d_w=n/2 \pm O(\sqrt{n\log n})$ for each $w \in W$, 
\item $\sum_{w\in T} d_w=n|T|/2 \pm O(n^{3/2})$ for all $T \subseteq W$, and 
\item $\sum_{w \in W} (d_w - n/2)^2 = O(n^2)$.
\end{itemize} 
Such a sequence is a graphic sequence for all sufficiently large $n$. Let $G$ be a uniformly random graph on $W$ with this degree sequence. Then, for any fixed $v \in W$ and $S\subseteq W$ satisfying $|S|,n-|S|=\Omega(n)$, the following hold.
\begin{enumerate}
    \item\label{enumcons1} For any integer $0 \le t \le |S|$, parameterising $t=|S|/2+\tau\sqrt n$, if $|\tau|  > n^{1/10}$, then we have
    \[\Pr(\deg_S(v)=t) \le \exp(-\Omega(\tau^2)),\]
    and if $|\tau| \le n^{1/10}$, then we have
    \[\Pr(\deg_S(v)=t) \le \exp\left(O\left(|\tau|+\sqrt{\log n}\right)\right) \Pr(Z=t),\]
    where $Z=|R\cap S|$ for a random subset $R\subseteq W$ of size $d_v$, i.e., 
    \[Z\sim \operatorname{Hypergeometric}(n,|S|, d_v).\]
    \item\label{enumcons2}Let us write 
    \[\Pr(\deg_S(v)=t)=p(v,(d_w)_{w\in S},(d_w)_{w\notin S},t)\] 
    as a function of $v$, the relevant degree sequences, and $t$. Then, for $t=|S|/2\pm O(\sqrt{n \log n})$ and the other parameters as constrained above, this function $p(\cdot)$ depends continuously on its parameters, in the following sense: if
    \begin{itemize}
        \item $|t-t'|,|d_v-d_{v'}'|\le n^{1/2-\Omega(1)}$,
        \item $(d_w)_{w\in S}$ and $(d_w')_{w\in S'}$ are proximate, and
        \item $(d_w)_{w\in W\setminus S}$ and $(d_w')_{w\in W'\setminus S'}$ are proximate,
    \end{itemize}
    then 
    \[p(v,(d_w)_{w\in S},(d_w)_{w\in W\setminus S},t)\simeq p(v',(d'_w)_{w\in S'},(d'_w)_{w\in W'\setminus S'},t'),\]
    recalling that $\simeq$ denotes equality up to a multiplicative factor of $1 \pm n^{-\Omega(1)}$.
\end{enumerate}
\end{lemma}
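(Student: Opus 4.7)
My plan is to reduce $\Pr(\deg_S(v)=t)$ to a ratio of enumeration counts for graphs with prescribed degree sequences and then invoke the asymptotic formulas of Canfield--Greenhill--McKay~\cite{CGM08} and McKay--Wormald~\cite{MW90}. Graphicality of $(d_w)_{w\in W}$ in the stated regime is immediate from the Erd\H os--Gallai conditions, since all degrees lie within $O(\sqrt{n\log n})$ of $n/2$ and $n$ is large. For the probability itself, we write
\[\Pr(\deg_S(v)=t)=\frac{\sum_{N}\bigl|\mc G_{W\setminus\{v\}}(d-\mathbbm 1_N)\bigr|}{\bigl|\mc G_W(d)\bigr|},\]
where $\mc G_U(e)$ denotes the set of labelled simple graphs on $U$ with degree sequence $e$, the sum ranges over neighbourhoods $N\subseteq W\setminus\{v\}$ of size $d_v$ with $|N\cap S|=t$, and $d-\mathbbm 1_N$ is the reduced sequence on $W\setminus\{v\}$. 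Both numerator and denominator fall squarely within the regime where CGM applies.

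For part (1), I apply the CGM formula to each enumeration in the ratio. It produces an explicit Gaussian-type prefactor multiplied by a product of binomial coefficients indexed by symmetric statistics of the degree sequence (total edges, sum of squared deviations from $n/2$, etc.). After substituting and simplifying, the purely combinatorial part of the ratio collapses to the hypergeometric probability $\Pr(Z=t)$, while a residual multiplicative factor measures the asymmetry of the CGM prefactor across different choices of $N$. In the moderate range $|\tau|\le n^{1/10}$, a Taylor expansion of the prefactor around the symmetric choice $t=|S|/2$, combined with the bounded-variance hypothesis $\sum_w(d_w-n/2)^2=O(n^2)$, bounds this residual factor by $\exp(O(|\tau|+\sqrt{\log n}))$, as claimed. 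For $|\tau|>n^{1/10}$, the Gaussian tail $\exp(-\Omega(\tau^2))$ follows by combining the standard hypergeometric tail $\Pr(Z=t)=\exp(-\Omega(\tau^2))$ with a switching argument that exchanges a neighbour of $v$ in $S$ for one in $W\setminus S$, showing that the corresponding ratio of counts decays geometrically as one moves away from the mean, and thereby reducing the tail to the moderate regime already handled.

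For part (2), the continuity statement reduces to the fact that the CGM formula depends continuously (multiplicatively within $1\pm n^{-\Omega(1)}$) on its input degree sequence through a small number of symmetric summary statistics: linear ones like $\sum_w d_w$, quadratic ones like $\sum_w(d_w-n/2)^2$, and similar functionals. In case (1) of \cref{def:close}, the bijective matching with total discrepancy $O(|V|)$ controls each linear statistic directly, and the $O(\sqrt{n\log n})$ bound on each degree deviation then controls each higher statistic. In case (2), the Kolmogorov closeness of the empirical distributions controls integrals of bounded functions against them, which (after truncation at the $O(\sqrt{n\log n})$ scale) encompasses each relevant functional. This agreement propagates through the CGM asymptotic to yield $p\simeq p'$, as required. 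The main obstacle in the whole argument is the careful bookkeeping of the CGM prefactor: extracting a clean $\exp(O(|\tau|+\sqrt{\log n}))$ bound on the residual factor requires tracking precisely how the Gaussian correction shifts as the reduced degree sequence on $W\setminus\{v\}$ is perturbed by $\pm 1$ at each neighbour of $v$, uniformly over $t$, and especially extending this control into the Gaussian-tail regime $|\tau|>n^{1/10}$ where CGM begins to lose power and the supplementary switching argument must take over.
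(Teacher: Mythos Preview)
Your overall framework---writing $\Pr(\deg_S(v)=t)$ as a sum over candidate neighbourhoods $N$ of ratios of graph-enumeration counts and evaluating each via McKay--Wormald/CGM asymptotics---is exactly the paper's. The paper packages this as an intermediate identity
\[\Pr(\deg_S(v)=t)=(1\pm O(n^{-1/6}))\,\Pr(Z=t)\,\exp(\Lambda_1-\Lambda_3)\,\mathbb{E}_T\bigl[\exp(-\Lambda_T)\bigr],\]
where $T$ is a uniformly random neighbourhood with $|T\cap S|=t$ and $\Lambda_T=\sum_{i\ne v}(-1)^{\mathbbm{1}_T(i)}\beta_i/\sqrt{n-1}$ is a linear functional on the slice; the $\Lambda_i$ are explicit polynomials in the $\beta_w$.

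Two places where your sketch diverges from, or is vaguer than, the paper. First, the step you describe as a ``Taylor expansion of the prefactor around $t=|S|/2$'' hides the main work: one must evaluate the exponential moment $\mathbb{E}_T[\exp(-\Lambda_T)]$ of a slice random variable. The paper does this via Bolthausen's combinatorial CLT in the moderate range, obtaining $\exp(-\Lambda_2+\Lambda_4+o(1))$ for further explicit $\Lambda_2,\Lambda_4$; the bounds $|\Lambda_i|=O(|\tau|+\sqrt{\log n})$ then follow directly from the three hypotheses on $(d_w)$. A Taylor expansion in $t$ alone does not deliver this without some concentration input for $\Lambda_T$. Second, for the tail $|\tau|>n^{1/10}$ the paper does \emph{not} switch: the enumeration identity above is valid for all $t$, and a crude subgaussian (Azuma--Hoeffding) bound on the slice gives $\mathbb{E}_T[\exp(-\Lambda_T)]\le\exp(O(|\tau|\operatorname{polylog} n))$, which is absorbed by the hypergeometric tail $\exp(-\Omega(\tau^2))$. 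Your worry that ``CGM begins to lose power'' in the tail is unfounded, and no auxiliary switching is needed. Your treatment of part~(2) is in line with the paper's, which likewise verifies that each of the summary statistics $\Lambda_1,\dots,\Lambda_4$ and the hypergeometric factor is polynomially stable under both clauses of the proximity definition.
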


Next, the second of the promised pair of lemmas is for the bipartite setting. 
\begin{lemma}\label{lem:enum-consequence-bipartite}
Let $((d_v)_{v\in V}, (d_w)_{w\in W})$ be a pair of sequences with identical sums on a bipartition $V\cup W$ with $|V|,|W|=\Theta(n)$ such that
\begin{itemize}
\item $d_v=|W|/2 \pm O(\sqrt{n\log n})$ for all $v\in V$ and $d_w=|V|/2 \pm O(\sqrt{n\log n})$ for all $w\in W$, 
\item $\sum_{v\in T} d_v=|W||T|/2 \pm O(n^{3/2})$ for all $T \subseteq V$ and $\sum_{w\in T} d_w=|V||T|/2 \pm O(n^{3/2})$ for all $T \subseteq W$, and
\item $\sum_{v \in V} (d_v - |W|/2)^2 = O(n^2)$ and $\sum_{w \in W} (d_w - |V|/2)^2 = O(n^2)$.
\end{itemize}
Such a pair of sequences form a bipartite-graphic sequence for all sufficiently large $n$. Let $G$ be a uniformly random bipartite graph between $V$ and $W$ with this degree sequence. Then, for any fixed $u \in V$ and $S\subseteq W$ satisfying $|S|,n-|S|=\Omega(n)$, the following hold.
\begin{enumerate}
    \item\label{enumconsbip1} For any integer $0 \le t \le |S|$, parameterising $t=|S|/2+\tau\sqrt n$, if $|\tau|  > n^{1/10}$, then we have
    \[\Pr(\deg_S(u)=t) \le \exp(-\Omega(\tau^2)),\]
    and if $|\tau| \le n^{1/10}$, then we have
    \[\Pr(\deg_S(u)=t) \le \exp\left(O\left(|\tau|+\sqrt{\log n}\right)\right) \Pr(Z=t),\]
    where $Z=|R\cap S|$ for a random subset $R\subseteq W$ of size $d_v$, i.e., 
    \[Z\sim \operatorname{Hypergeometric}(|W|,|S|, d_v).\]
    \item\label{enumconsbip2} Let us write 
    \[\Pr(\deg_S(u)=t)=p(u,(d_v)_{v\in V}, (d_w)_{w\in S},(d_w)_{w\in W\setminus S},t)\] 
    as a function of $u$, the relevant degree sequences, and $t$. Then, for $t=|S|/2\pm O(\sqrt{n \log n})$ and the other parameters as constrained above, this function $p(\cdot)$ depends continuously on its parameters, in the following sense: if
    \begin{itemize}
        \item $|t-t'|,|d_u-d'_{u'}|\le n^{1/2-\Omega(1)}$,
        \item $(d_v)_{v\in V}$ and $(d'_v)_{v\in V'}$ are proximate,
        \item $(d_w)_{w\in S}$ and $(d_w')_{w\in S'}$ are proximate, and
        \item $(d_w)_{w\in W\setminus S}$ and $(d'_w)_{w\in W'\setminus S'}$ are proximate,
    \end{itemize}
    then 
    \[p(u, (d_v)_{v\in V}, (d_w)_{w\in S},(d_w)_{w\in W\setminus S},t)\simeq p(u', (d'_v)_{v\in V}, (d'_w)_{w\in S'},(d'_w)_{w\in W'\setminus S'},t'),\]
    recalling that $\simeq$ denotes equality up to a multiplicative factor of $1 \pm n^{-\Omega(1)}$.
\end{enumerate}
\end{lemma}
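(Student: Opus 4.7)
The plan is to mirror the argument for the non-bipartite analogue, \cref{lem:enum-consequence}, substituting the bipartite Canfield--Greenhill--McKay enumeration formula for its non-bipartite counterpart throughout. The proof naturally splits into three tasks: verifying bipartite-graphicality, obtaining a usable asymptotic formula for $\Pr(\deg_S(u)=t)$, and establishing continuity of this formula under the two notions of proximity from \cref{def:close}. For graphicality, I would verify the Gale--Ryser criterion directly: the hypotheses $d_v = |W|/2 \pm O(\sqrt{n\log n})$ and $\sum_{v \in T} d_v = |W||T|/2 \pm O(n^{3/2})$, together with the analogous conditions on the $W$-side, imply the required partial-sum inequalities with slack $\Omega(n)$ for all large $n$.

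The heart of the argument is the asymptotic formula. Writing $\Omega((c_v),(c_w))$ for the set of bipartite graphs on $(V,W)$ with the given degrees (and similarly $\Omega_S$, $\Omega_{W\setminus S}$ for the bipartite pieces on $(V,S)$ and $(V,W\setminus S)$), I would partition each $G \in \Omega((d_v),(d_w))$ by its restrictions to $(V,S)$ and $(V,W\setminus S)$. Parameterising by the degree sequence $(a_v)_{v \in V}$ of the $(V,S)$-piece, this gives
\[
\Pr(\deg_S(u)=t) \;=\; \frac{1}{|\Omega((d_v),(d_w))|}\sum_{\substack{(a_v):\,a_u=t\\ \sum_v a_v=\sum_{w\in S} d_w}} |\Omega_S((a_v),(d_w)_{w\in S})|\cdot|\Omega_{W\setminus S}((d_v-a_v),(d_w)_{w\in W\setminus S})|.
\]
I would then apply the bipartite CGM enumeration formula to each of the three $|\Omega|$ factors. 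After cancellation, the summand becomes a product of binomial coefficients times an explicit exponential correction depending on low-order degree moments. For $|\tau| \le n^{1/10}$, a Laplace/saddle-point estimate around the dominant term $a_v \approx d_v|S|/|W|$ collapses the sum to the hypergeometric probability $\Pr(Z=t)$ multiplied by an exponential factor of the form $\exp(O(|\tau|+\sqrt{\log n}))$; for $|\tau| > n^{1/10}$, a direct large-deviation estimate on the outer binomial coefficient, coupled with a uniform CGM-based bound on the remaining factors, gives the $\exp(-\Omega(\tau^2))$ tail bound.

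Continuity under proximity is a matter of inspecting the explicit asymptotic formula and checking stability under each of the two cases of \cref{def:close}. Under the bijective notion (case 1), marginal products such as $\prod_v \binom{|W|}{d_v}$ and their bipartite analogues are stable because a pointwise shift by $O(1)$ in each $d_v$ multiplies the relevant binomial by $1 + O(1/n)$, and summing $O(|V|)$ total shifts yields a $1 + O(1)$ factor which is absorbed after taking appropriate ratios. Under the Kolmogorov notion (case 2), what enters the CGM formula are quadratic moments like $\sum_v (d_v - |W|/2)^2/n$, whose empirical-measure analogues are controlled by a truncation argument combined with the $a priori$ bound $d_v = |W|/2 \pm O(\sqrt{n\log n})$ and integration by parts against the Kolmogorov-closeness hypothesis. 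Perturbations $|t-t'|, |d_u - d'_{u'}| \le n^{1/2-\Omega(1)}$ shift the Gaussian parameter in the Laplace estimate by $n^{-\Omega(1)}$ and therefore move the overall probability by a multiplicative factor $1 \pm n^{-\Omega(1)}$, which is exactly the $\simeq$ relation.

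The main obstacle will be the continuity bookkeeping in part (\ref{enumconsbip2}). The CGM exponential correction depends on degree sequences through second moments, and one must show that these second moments are sufficiently close under Kolmogorov-distance perturbations; a naive Kolmogorov bound only controls first moments, so a truncation to the box $|d_v - |W|/2| \le C\sqrt{n\log n}$ (using the hypotheses), together with integration by parts of the second-moment functional against the Kolmogorov-distance estimate, is required. Beyond this, the argument is a largely mechanical bipartite transcription of the proof of \cref{lem:enum-consequence}: every non-bipartite CGM input has a direct bipartite counterpart, and the edge-splitting structure is even cleaner in the bipartite case since there is no interaction between the two halves of $W$ through $V$-internal edges.
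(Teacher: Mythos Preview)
Your decomposition is genuinely different from the paper's, and the difference is where the difficulty lies. You propose to split the bipartite graph into its $(V,S)$ and $(V,W\setminus S)$ pieces and sum over the entire profile $(a_v)_{v\in V}$ of $S$-degrees; this is a sum over an $|V|$-dimensional lattice, to which you intend to apply a Laplace/saddle-point argument. The paper instead deletes the \emph{single} vertex $u$: for each candidate neighbourhood $T\subseteq W$ of size $d_u$, the remaining graph on $(V\setminus\{u\},W)$ has degree sequence obtained by decrementing $d_w$ by one for each $w\in T$, so the CGM formula applies directly with essentially unchanged parameters, and $\Pr(N(u)=T)$ becomes an explicit product over $w\in W$ of factors $(1\pm\beta_w/\sqrt{m})$. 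Summing over $T$ with $|T\cap S|=t$ then reduces to an expectation over a product of two slice measures (one on $S$, one on $W\setminus S$), which is evaluated by sub-Gaussian concentration and a combinatorial CLT; the result is the closed form in \cref{prop:bigraph-bounded}, from which both the tail bound and the continuity statement are read off term by term exactly as in the non-bipartite case.

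Your route is not obviously wrong, but it has a real gap at the saddle-point step. First, CGM does not apply to $\Omega_S((a_v),(d_w)_{w\in S})$ uniformly in $(a_v)$: terms where some $a_v$ is far from $d_v|S|/|W|$ violate the regularity hypotheses, and you would need an \emph{a priori} estimate (not using CGM) to show their total contribution is negligible. Second, even on the bulk, after cancellation the summand carries an exponential correction whose argument depends on the second moment $\sum_v(a_v-\bar a)^2$, so the ``Laplace'' step is a genuinely multidimensional local-CLT computation with a quadratic tilt and the linear constraint $\sum_v a_v=\sum_{w\in S}d_w$; collapsing this to the single hypergeometric $\Pr(Z=t)$ times $\exp(O(|\tau|+\sqrt{\log n}))$ is not a routine manipulation. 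The paper's single-vertex-deletion approach avoids all of this. Your remarks on the continuity bookkeeping (truncation plus integration by parts to control second moments under Kolmogorov proximity) are correct and match the paper's \cref{lem:kolmogorov-moment}, but you would only get to use them once an explicit formula of the shape \cref{bigr-bd-small} is in hand.
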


Finally, we require the following concentration properties of the edge-counts in a random graph. 
\begin{lemma}\label{lem:total-disc-concentration}
There are absolute constants $C, c > 0$ such that if $G\sim\GG(n,1/2)$ is a random graph, then with probability at least $1-2\exp(-cn)$ we have for all disjoint $S,T$ that
\begin{enumerate}
\item $\sum_{v\in T}(\deg_S(v)-|S|/2)^2\le Cn^2$,
\item $\sum_{v\in T}(\deg_T(v)-(|T|-1)/2)^2\le Cn^2$,
\item $|\sum_{v\in T}(\deg_S(v)-|S|/2)|\le Cn^{3/2}$, and
\item $|\sum_{v\in T}(\deg_T(v)-(|T|-1)/2)|\le Cn^{3/2}$. \qed
\end{enumerate}
\end{lemma}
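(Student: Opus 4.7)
The plan is to prove each of the four bounds by establishing exponentially small failure probability for a single disjoint pair $(S,T)$, then taking a union bound over the $3^n$ such pairs (each vertex lies in $S$, in $T$, or in neither); thus a per-pair failure probability of the form $\exp(-c_C n)$ with $c_C\to\infty$ as the constant $C$ grows will suffice.

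Bounds (3) and (4) are linear discrepancy statements. Indeed, $\sum_{v\in T}(\deg_S(v)-|S|/2)=e(S,T)-|S||T|/2$ is a centered sum of $|S||T|\le n^2/4$ independent Rademacher $\pm 1/2$ variables, so \cref{thm:binomial} gives
\[\Pr\!\left(|e(S,T)-|S||T|/2|\ge Cn^{3/2}\right)\le 2\exp\!\left(-C^2 n^3/(2|S||T|)\right)\le 2\exp(-2C^2 n),\]
and bound (4) follows analogously since $\sum_{v\in T}(\deg_T(v)-(|T|-1)/2)=2\bigl(e(T)-\tfrac12\binom{|T|}{2}\bigr)$ is (twice) a centered sum of $\binom{|T|}{2}$ Rademacher $\pm 1/2$ variables. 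For bound (1), I would exploit the independence coming from disjointness: since $S\cap T=\emptyset$, the variables $D_S(v):=\deg_S(v)-|S|/2$ for $v\in T$ depend on pairwise-disjoint sets of edges and are therefore mutually independent. Each $D_S(v)$ is centered, bounded by $|S|/2=O(n)$, and sub-Gaussian with parameter $O(\sqrt n)$, so $Y_v:=D_S(v)^2$ is sub-exponential with norm $O(n)$ and $\E Y_v=|S|/4$. Bernstein's inequality for independent sub-exponentials then yields $\Pr(\sum_{v\in T} Y_v\ge Cn^2)\le 2\exp(-\Omega(Cn))$, which for $C$ large enough survives the union bound.

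Bound (2) is the subtlest, since the quantities $D_T(v):=\deg_T(v)-(|T|-1)/2$ for $v\in T$ are no longer independent (distinct $v$ share the edge $vv'$). To handle it, I would write $\sum_{v\in T} D_T(v)^2=\|M\eta\|^2$, where $\eta=(\eta_e)_{e\in\binom{T}{2}}$ with $\eta_e=\mbm{1}[e\in E]-1/2$ is the vector of independent centered edge indicators within $T$, and $M$ is the $|T|\times\binom{|T|}{2}$ unsigned vertex-edge incidence matrix of the complete graph on $T$. A short computation gives $MM^T=(|T|-2)I+J$, so $\|M^TM\|_{\mathrm{op}}=2|T|-2=O(n)$ and $\|M^TM\|_F^2=O(|T|^3)=O(n^3)$. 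The Hanson--Wright inequality for quadratic forms in Rademacher variables then delivers $\Pr(|\|M\eta\|^2-\E\|M\eta\|^2|\ge Cn^2)\le 2\exp(-\Omega(Cn))$, and one union-bounds over $T\subseteq V(G)$.

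The main obstacle is precisely this appeal to Hanson--Wright: a naive Azuma/McDiarmid argument is too weak, because toggling a single edge can shift $\|M\eta\|^2$ by $\Omega(n)$, yielding only $\exp(-\Omega(1))$ concentration at the $Cn^2$ scale; likewise the crude bound $\sum_v D_T(v)^2\le |T|\cdot\max_v D_T(v)^2$ together with the standard uniform estimate $\max_v|D_T(v)|=O(\sqrt{n\log n})$ overshoots by a $\log n$ factor. One could alternatively avoid Hanson--Wright altogether by a decoupling argument that partitions $T$ into two halves and reduces the cross terms to the independent case of (1), at the cost of some extra bookkeeping.
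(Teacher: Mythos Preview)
Your argument is correct. The paper actually omits this proof entirely, saying only that it is ``an immediate application of a Chernoff bound and the union bound, similar to the proof of \cref{lem:total-friendliness}.'' Your treatment of (3) and (4) matches that description exactly, and your union bound over the $3^n$ disjoint pairs $(S,T)$ with per-pair failure probability $\exp(-c_C n)$, $c_C\to\infty$, is precisely the right framing.

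For (1) and (2) you go somewhat beyond what a bare Chernoff bound gives, and rightly so: as you observe, the quantity in (2) is a quadratic form in the edge indicators, and a single edge flip can change it by $\Theta(n)$, so bounded-differences is too weak. Your use of Bernstein for the independent sub-exponentials in (1) and Hanson--Wright for (2) is the clean way to do it; your spectral computation $MM^T=(|T|-2)I+J$, giving $\|M^TM\|_{\mathrm{op}}=O(n)$ and $\|M^TM\|_F^2=O(n^3)$, is correct and yields the required $\exp(-\Omega(Cn))$ tail at deviation $Cn^2$. The decoupling alternative you mention (split $T$ in two and reduce to the disjoint case) would also work and is perhaps what the paper's terse ``Chernoff'' has in mind, but your Hanson--Wright route is more direct. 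In short, your proposal is a correct, fully fleshed-out version of what the paper leaves implicit.
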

The proof of \cref{lem:total-disc-concentration} is an immediate application of a Chernoff bound and the union bound, similar to the proof of  \cref{lem:total-friendliness}, so we omit the details.

Now we are ready to finish the proof of \cref{prop:empirical-induction} by establishing its inductive step.
\subsection{Proof of the inductive step}\label{sub:inductive-step}

Consider $k-1$ iterations of the $\alpha$-swap process, giving rise to a partition of the vertices into sets $V_x$, for $x\in \{0,1\}^k$, as defined in \cref{def:simple-iterated-swap}. An additional iteration of the $\alpha$-swap process will refine this to a partition into sets $V_x$, for $x\in \{0,1\}^{k+1}$; to emphasise the difference between these two partitions we write $W_x$ instead of $V_x$ when $x\in \{0,1\}^k$.

By the inductive hypothesis, there are real numbers $\pi_x\ge \alpha^{4(k-1)}/2$ and distributions $\mc{L}_x$ for $x \in \{0,1\}^k$ such that the following properties are satisfied with high probability.
\begin{enumerate}[{\bfseries{B\arabic{enumi}}}]
\item\label{C1} For each $x\in\{0,1\}^{k}$, we have 
\[\left|\vphantom{\deg_{V_x}}|W_x|-\pi_x n\right|\le n^{1-c_{\alpha,k-1}}.\]
\item\label{C2} For each $x\in\{0,1\}^{k}$, we have 
\[\operatorname d_{\mathrm{K}}(\mc{\wh{L}}_x,{\mathcal{L}}_x) \le n^{-c_{\alpha,k-1}}.\]
\item\label{C3} For each vertex $v \in V(G)$ and each $x\in\{0,1\}^{k}$, we
have 
\[\left|{\deg_{W_x}(v)}-|W_x|/2\right|\le C_{\alpha,k-1}n^{1/2}\sqrt{\log n}.\]
\item\label{C4} For each $x\in\{0,1\}^{k}$, and each box $B$ with side lengths $n^{-c_{\alpha,k-1}}$ we have 
\[\mc{L}_x(B)\le \vol(B) \exp(C_{\alpha,k-1}\sqrt{\log n}).\]
\end{enumerate}

Here, we remind the reader that $\mc{\wh{L}}_x$ is an empirical distribution measuring the degrees of vertices in $W_x$ into the various sets $W_y$. Also, we remark that although \cref{C4} as written only concerns boxes with side lengths \emph{exactly} $n^{-c_{\alpha,k}}$, a simple covering argument shows that the same conclusion holds when $B$ is a box with side lengths \emph{at least} $n^{-c_{\alpha,k}}$ (up to a constant factor).

Next, let
\[\mc{R}=\left((W_x)_{x\in \{0,1\}^k},(\deg_{W_x}(v))_{v\in V(G),x\in \{0,1\}^k}\right)\]
record the part and degree information after $k-1$ iterations of the $\alpha$-swap process, so \cref{C1,C2,C3,C4} are all really properties of $\mc R$. Let $\mathcal E$ be the event that all the conclusions of \cref{lem:total-disc-concentration} hold for all disjoint subsets of vertices $S$ and $T$. By \cref{lem:total-disc-concentration}, we have \[\Pr({\mc E}^c)=\E\left[\Pr({\mc E}^c\, | \,\mc R)\right]\le e^{-cn}\] for some universal $c>0$, so by Markov's inequality, with high probability, $\mc R$ has the property that
\begin{enumerate}[{\bfseries{B}\arabic{enumi}}, resume]
\item\label{C5} $\Pr(\mc E \, | \, \mc R)\ge 1- e^{-(c/2)n}$.
\end{enumerate}

Now, let us condition on an outcome of $\mc R$ satisfying \cref{C1,C2,C3,C4,C5}; we say that such an outcome is \emph{well-behaved}. It suffices to prove that, in the resulting conditional probability space, \cref{B1,B2,B3,B4} hold with high probability. Note that, conditionally, $G$ is now a random graph with certain degree constraints. To be precise, for each $x\in \{0,1\}^k$, the induced subgraph $G[W_x]$ is uniform over all graphs in which each $v\in W_x$ has degree $\deg_{W_x}(v)$, and for each pair of distinct $x,y\in \{0,1\}^k$, the subgraph $G[W_x,W_y]$ (consisting of the edges of $G$ between $W_x$ and $W_y$) is uniform over all bipartite graphs in which each $v\in W_x$ has degree $\deg_{W_y}(v)$ and each $v\in W_y$ has degree $\deg_{W_x}(v)$. Furthermore, all these random subgraphs of the form $G[W_x],G[W_x,W_y]$ are independent, and \cref{C1,C3,C5} in particular ensure that either \cref{lem:enum-consequence} or \cref{lem:enum-consequence-bipartite} apply to all these subgraphs.

Recalling that we have performed $k-1$ iterations of the $\alpha$-swap procedure so far, we now consider the effect of a $k$th $\alpha$-swap. Recall that this $\alpha$-swap has two steps. First, the $\lfloor\alpha n\rfloor$ unfriendliest vertices on each side are swapped. The information recorded in $\mc R$ is enough to determine the outcome of this first step.  Second, a random set of $\lfloor\alpha^4n\rfloor$ vertices on each side are swapped; let $\mathcal S$ be the random pair of sets that are swapped in this second step, and note that $\mathcal S$ is independent from $G$ conditional on the partition at that time.

For the remainder of this proof, asymptotic notation should be understood to be treating $k,\alpha$ as fixed constants, so, for example, the inequality in \cref{C2} can be described as saying $\operatorname d_{\mathrm{K}}(\mc{\wh{L}}_x,{\mathcal{L}}_x) \le n^{-\Omega(1)}$.

\subsubsection{Concentration of the part sizes}\label{subsec:concentration}
First we prove that \cref{B1} holds with high probability. Let $S_i=\{z\in \{0,1\}^{k}:z_{k}=i\}$, and recall that the bisection resulting from the first $k-1$ iterations of the $\alpha$-swap process has parts $A_{k-1}=\bigcup_{z\in S_0}W_z$ and $B_{k-1}=\bigcup_{z\in S_1}W_z$. (Recall that $z_k$ records whether a vertex is in $A_{k-1}$ or $B_{k-1}$.)

Consider any $z\in\{0,1\}^k$, and let $W_{z}'$ be the portion of $W_z$ that is swapped during the first step of the $k$th $\alpha$-swap (i.e., these vertices are among the $\lfloor \alpha n\rfloor$ unfriendliest vertices in their part of the bisection $A_{k-1}\cup B_{k-1}$; this is determined by the outcome of $\mc R$ we have conditioned on). It suffices to prove that $|W_{z}'| = \pi_{z}' n \pm n^{1-\Omega(1)}$, for some $\pi_{z}'$ that does not depend on the specific choice of $\mc{R}$ that we are conditioning on (but demanding no lower bound on $\pi_{z}'$). Indeed, for any $b\in \{0,1\}$, the second part of the $\alpha$-swap process (in which we randomly swap sets $A',B'$ of $\lfloor \alpha^4 n\rfloor$ vertices on both sides) will then, with high probability, yield $|V_{(z,b)}| = \pi_{(z,b)}n \pm n^{1-\Omega(1)}$, where 
\begin{align*}\pi_{(z,b)}&=\begin{cases}
\alpha^4\pi_{z}'+ (1-\alpha^4)(\pi_z-\pi_{z}')&\text{if $z_k=b$}\\
\alpha^4(\pi_z-\pi_{z}')+(1-\alpha^4)\pi_{z}'&\text{if $z_k\ne b$}
\end{cases}\\
&\ge \alpha^4\pi_z\ge \alpha^4\cdot\alpha^{4(k-1)}/2 =\alpha^{4k}/2.
\end{align*}
Here we have used \cref{C1} and a Chernoff bound for the hypergeometric distribution; see for example~\cite[Theorem~2.10]{JLR00}.

To this end, we study the sets $W_{z}'$. Assume without loss of generality that $z_k=0$ (i.e., $W_{z}'\subseteq A_{k-1}$). Let $A'$ be the set of the $\lfloor\alpha n\rfloor$ unfriendliest vertices in $A_{k-1}$ (so $W_{z}'=W_z\cap A'$), and let $A^{(\zeta)}$ be the set of vertices in $A_{k-1}$ with friendliness at most $\zeta\sqrt n$. We will approximate $A'$ with $A^{(\zeta)}$, for an appropriate choice of $\zeta$.

For $\zeta\in \RR$, define the affine half-space
\[H_\zeta=\left\{d\in \RR^{\{0,1\}^{k}}:\sum_{y\in S_0}d_y-\sum_{y\in S_1}d_y\le \zeta\right\}.\]
Then, $|A^{(\zeta)}|=\sum_{y\in S_0}|W_y|{\wh {\mc L}}_y(H_\zeta)$. Let us set \[f(\zeta)=\sum_{y\in S_0}\pi_y\mc L_y(H_\zeta).\] 
By the second point in \cref{lem:halfspace}, the function $f$ satisfies a Lipschitz-like property: if $|\zeta-\zeta'|\le n^{-\Omega(1)}$ then $|f(\zeta)-f(\zeta')|\le n^{-\Omega(1)}$. Since $\lim_{\zeta\to -\infty}f(\zeta)=0$ and $\lim_{\zeta\to \infty}f(\zeta)=\sum_{y\in S_0}\pi_y=1/2+o(1)$, there is some $\zeta_\alpha$ such that $|f(\zeta_\alpha)-\alpha|\le n^{-\Omega(1)}$.

By the first point in \cref{lem:halfspace}, we then have $||A'|-|A^{(\zeta_\alpha)}||\le n^{1-\Omega(1)}$. That is to say, the set $A'$ differs from the set $A^{(\zeta_\alpha)}$ by only $n^{1-\Omega(1)}$ elements (noting that either $A'\subseteq A^{(\zeta)}$ or $A^{(\zeta)}\subseteq A'$ always). Again using the first point in \cref{lem:halfspace}, it follows that 
\[|W_{z}'|=|W_z\cap A'|=|W_z\cap A^{(\zeta_\alpha)}|\pm n^{1-\Omega(1)}=|W_z|\wh{\mc L}(H_{\zeta_\alpha})\pm n^{1-\Omega(1)}=\pi_z'n\pm n^{1-\Omega(1)},\]
as desired, where $\pi_z'=\pi_z\mc L(H_{\zeta_\alpha})$.

\subsubsection{Some intermediate empirical degree distributions}\label{subsec:intermediate}
For a vertex $v$, define the degree vector
\begin{equation}\vec{g}(v)=\left((\deg_{W_y}(v)-|W_y|/2)/\sqrt{n}\right)_{y\in \{0,1\}^{k}}\label{eq:uv}\end{equation}
(which is determined by $\mc R$), and recall that for $z\in \{0,1\}^{k}$, $\wh{\mc L}_z$ is the uniform measure on the sequence $(\vec g(v))_{v\in W_z}$. For $b\in \{0,1\}$, let $\wh{\mc D}_{(z,b)}$ be the uniform measure on $(\vec g(v))_{v\in V_{(z,b)}}$ (which depends on $\mc R,\mc S$, but not the remaining randomness of $G$). This can be thought of as an `intermediate' empirical degree distribution between $\wh{\mc L}_z$ and $\wh{\mc L}_{(z,b)}$, where we consider the degrees from vertices in $V_{(z,b)}$ into the sets $W_y$.

The considerations in the previous section give us quite strong control over the $\wh{\mc D}_{(z,b)}$. Indeed, for any box $B\subseteq \RR^{\{0,1\}^k}$ let $W_{z}(B)$ be the set of all $v\in W_z$ with $\vec g(v)\in B$, and as in the last section, assume without loss of generality that $z_k=0$. Let $\rho_z'(B)=\pi_z\mc L_z(B\cap H_{\zeta_\alpha})$, so that $|W_z(B)\cap W_z'|=\rho_z'(B)n \pm n^{1-\Omega(1)}$, and a concentration inequality for the hypergeometric distribution shows that with probability $1-O(1/n)$ over the randomness of $\mc S$, we have
$|W_z(B)\cap V_{(z,b)}|=\rho_z(B)n \pm n^{1-\Omega(1)}$, where
\begin{align*}\rho_{(z,b)}(B)&=\begin{cases}
\alpha^4\rho_{z}'(B)+ (1-\alpha^4)(\pi_z\mc L_z(B)-\rho_{z}'(B))&\text{if $z_k=b$},\\
\alpha^4(\pi_z\mc L_z(B)-\rho_{z}'(B))+(1-\alpha^4)\rho_{z}'(B)&\text{if $z_k\ne b$}.
\end{cases}
\end{align*}
Since $\wh{\mc D}_{(z,b)}(B)=|W_z(B)\cap V_{(z,b)}|/|V_{(z,b)}|$, \cref{B1} implies that $\wh{\mc D}_{(z,b)}(B)=\mc D_{(z,b)}(B) \pm n^{-\Omega(1)}$, where $\mc D_{(z,b)}$ is the probability distribution for which $\mc D_{(z,b)}(S)$ is proportional to $\rho_{(z,b)}(S)$ for all boxes $S\subseteq \RR^{\{0,1\}^k}$. Recalling \cref{C3} and \cref{C4}, and partitioning the big box 
\[Q=\left(-C_{\alpha,k-1}\sqrt{\log n},C_{\alpha,k-1}\sqrt{\log n}\right]^{2^k}\]
into $n^{c/2+o(1)}$ boxes with side lengths $n^{-c/(2\cdot 2^k)}$ for a sufficiently small $c > 0$, it follows from \cref{lem:box-kolmogorov} that $\operatorname d_{\mathrm{K}}(\wh{\mc D}_{(z,b)},\mc D_{(z,b)}) \le n^{-\Omega(1)}$ with high probability over the randomness of $\mc S$.

\subsubsection{Controlling the outlier degrees}\label{subsec:outliers}
We next prove that \cref{B3} holds with high probability. In addition to our conditioning on $\mc R$, in this subsection we also condition on an outcome of $\mc S$ such that each $|V_x|=\Omega(n)$ (we have just observed in our consideration of \cref{B1} that such bounds hold with high probability).

Fix an arbitrary $x\in \{0,1\}^{k+1}$ and $y\in \{0,1\}^k$. We wish to show that with high probability, for every $v\in W_y$ we have $\left|{\deg_{V_x}(v)}-|V_x|/2\right|\le C_{\alpha,k}\sqrt{n\log n}$, for some $C_{\alpha,k}>0$. This suffices, since we will then be able to take the union bound over all $O(1)$ choices of $x,y$. The desired bound follows from part~\eqref{enumcons1} of \cref{lem:enum-consequence} and part~\eqref{enumconsbip1} of \cref{lem:enum-consequence-bipartite} along with a Chernoff bound for the hypergeometric distribution and a union bound over $v \in W_y$: if $z=(x_1,\dots,x_k)$ satisfies $z=y$, then we consider the degree-constrained random graph $G[W_y]$, and if we instead have $z\ne y$, then we consider the degree-constrained bipartite graph $G[W_y,W_z]$.

\subsubsection{Defining the ideal distributions}
We shall address \cref{B4} first before turning to \cref{B2} (which is by far the most involved of the four properties). Therefore, at this juncture, we take a moment to say something about how we will define the distributions $\mc{L}_x$ for $x\in \{0,1\}^{k+1}$. First, for specific outcomes of $\mc R,\mc S$ (which determine the sets $V_x$ for $x\in\{0,1\}^{k+1}$), we let $\mc L_x^{\mc R,\mc S}$ be the distribution obtained by choosing a random $v\in V_x$ and sampling its degree vector
\[\vec{d}(v)=\left((\deg_{V_y}(v)-|V_y|/2)/\sqrt{n}\right)_{y\in \{0,1\}^{k+1}}\]
according to the remaining randomness in $G$. We will later show that if $\mc R$ is well-behaved, and $\mc S$ also satisfies certain properties that hold with high probability, then $\mc L_x^{\mc R,\mc S}$ is actually not very sensitive to the specific choice of $\mc R$ and $\mc S$, whence we will be able to prove that \cref{B2} holds with high probability when we take $\mc L_x$ to be any such $\mc L_x^{\mc R,\mc S}$.

\subsubsection{Anti-concentration}\label{subsec:anticoncentration}
Here, we show that \cref{B4} holds. As in \cref{subsec:outliers}, we condition on a well-behaved outcome of $\mc R$ as well as on an outcome of $\mc S$ such that each $|V_x|=\Omega(n)$. By the above discussion, it suffices to show that $\mc L_x^{\mc R,\mc S}$ satisfies the anti-concentration property in \cref{B4}. The rough idea for establishing this involves combining \cref{lem:enum-consequence,lem:enum-consequence-bipartite} (which provide anti-concentration subject to the remaining randomness in $G$) with the anti-concentration property in \cref{C4} coming from the outcome of the process so far.

Fix a vertex $v\in W_z$ for some $z \in \{0,1\}^k$. By part~\eqref{enumcons1} of \cref{lem:enum-consequence} and part~\eqref{enumconsbip1} of \cref{lem:enum-consequence-bipartite}, for $y\in \{0,1\}^k$ and $t\in \NN$, parameterising $t=|V_{(y,0)}|/2+\tau\sqrt n$ and writing $d_v = \deg_{W_y}(v)$, we have
\[
\Pr\left(\deg_{V_{(y,0)}}(v) = t\right) \le \exp\left(O\left(\sqrt{\log n}\right)\right)n^{-1/2}
\]
uniformly in $t$. Indeed, when applying \cref{lem:enum-consequence}, this holds with room to spare when $|\tau| > |V_{(y,0)}|^{1/10} = \Omega(n^{1/10})$, and when $|\tau| \le |V_{(y,0)}|^{1/10}$, we may see that we uniformly have
\begin{align*}
\Pr\left(\deg_{V_{(y,0)}}(v) = t\right) &\le\exp\left(O\left(|\tau| +\sqrt{\log n}\right)\right) \frac{\binom{|V_{(y,0)}|}{t}\binom{|V^c_{(y,0)}|}{d_v-t}}{\binom{m-1}{d_v}}\\
&\le \exp\left(O\left(\sqrt{\log n}\right)\right)n^{-1/2}
\end{align*}
by a standard anti-concentration inequality for the hypergeometric distribution (see for example \cite[Lemma~3.2]{FKS20}).

Since we are conditioning on $\mc R,\mc S$, the degree-constrained random graph $G[W_z]$ and the degree-constrained bipartite graphs $G[W_z,W_y]$ are all independent, so the $2^k$ different degrees $\deg_{V_{(y,0)}}(v)$, for $y\in \{0,1\}^{k}$, are all independent as well. Thus, we obtain the uniform joint anti-concentration bound
\[
\Pr\left(\deg_{V_{(y,0)}}(v) = t_y \text{ for all } y\in\{0,1\}^k\right) \le \exp\left(O\left(\sqrt{\log n}\right)\right) \left(n^{-1/2}\right)^{2^k}.
\]
Note that for each $y  \in \{0,1\}^k$, the degrees $\deg_{V_{(y,0)}}(v)$ and $\deg_{V_{(y,1)}}(v)$ are certainly not independent, since $\deg_{V_{(y,0)}}(v) + \deg_{V_{(y,1)}}(v) = \deg_{W_y}(v)$ is determined by $\mc R$. Nonetheless, our joint anti-concentration bound does imply that for any box $B\subseteq \RR^{\{0,1\}^{k+1}}$ with side lengths $D\ge 1/\sqrt n$, we have
\begin{equation}
\Pr\left(\vec{d}(v)\in B\right) \le \exp\left(O\left(\sqrt{\log n}\right)\right) D^{2^k}.\label{eq:joint-anticoncentration}
\end{equation}
Note that $\vol(B)=D^{2^{k+1}}$, so \cref{eq:joint-anticoncentration} only provides `half as much anti-concentration' as we desire for \cref{B4}. So far, we have only considered anti-concentration of $\vec d(v)$ when $v$ is a fixed vertex; we will next establish the remainder of our anti-concentration and \cref{B4} proper by allowing $v$ to vary and appealing to \cref{C2,C4}.

Recall the definition of the degree vectors $\vec g(v)$ and the empirical distributions $\wh{\mc D}_{(z,b)}$ from \cref{subsec:intermediate}. Each $\wh{\mc D}_{(z,b)}$ is obtained from $\wh{\mc L}_{z}$ by conditioning on an event that holds with probability $\Omega(1)$, so \cref{C4} implies the same anti-concentration property for these intermediate empirical distributions, i.e.,
\begin{equation}
    \wh{\mc D}_x(B) \le \exp\left(O\left(\sqrt{\log n}\right)\right)\vol(B).\label{eq:Dx-anticoncentration}
\end{equation}
for all boxes $B\subseteq\RR^{\{0,1\}^{k+1}}$ with side lengths at least $n^{-c}$, where $c=c_{\alpha,k-1}$, and all $x \in \{0,1\}^{k+1}$.

Now, let $\pi:\RR^{\{0,1\}^{k+1}}\to \RR^{\{0,1\}^k}$ be the projection map $(d_x)_{x\in \{0,1\}^{k+1}}\mapsto (d_{(y,0)}+d_{(y,1)})_{y\in \{0,1\}^k}$. Note that $\vec g(v)=\pi(\vec d(v))$ for all $v$, and note that if $B\subseteq \RR^{\{0,1\}^{k+1}}$ is a box with side lengths $n^{-c}$, then $\pi(B)$ is contained in a box with side lengths $2n^{-c}$. So, by \cref{eq:joint-anticoncentration,eq:Dx-anticoncentration}, we have
\begin{align*}
    {\mc L}_x^{\mc R,\mc S}(B)&=\sum_{v\in V_x:\vec g(v)\in \pi(B)} \frac{1}{|V_x|}\cdot \Pr\left(\vec d(v)\in B\right)\\
    &\le \wh{\mc D}_x(\pi(B))\sup_{v\in V_x}\Pr\left(\vec d(v)\in B\right)\\
    &\le  \exp\left(O\left(\sqrt{\log n}\right)\right) (2n^{-c})^{2^k} (n^{-c})^{2^k}\\
    &\le \exp\left(O\left(\sqrt{\log n}\right)\right) \vol(B)
\end{align*}
for all $x \in \{0,1\}^{k+1}$, as desired.

\subsubsection{Concentration of the empirical degree distributions}
In this subsection we use a second moment calculation as in \cref{sub:base-case} to show that, if we condition on appropriate outcomes of $\mc R$ and $\mc S$, then with high probability, for any $x\in \{0,1\}^{{k+1}}$, we have 
\[\operatorname{d}_{\mathrm K}\left(\wh{\mc L}_x,\mc L_x^{\mc R,\mc S}\right) \le  n^{-\Omega(1)}.\]
We shall later prove that the distributions $\mc L_x^{\mc R,\mc S}$, for appropriate $\mc R,\mc S$, are all Kolmogorov-close to each other; it will then follow that \cref{B2} holds with high probability.

As in the previous two subsections, we condition on a well-behaved outcome of $\mc R$ and an outcome of $\mc S$ for which $|V_x|=\Omega(n)$ for all $x \in \{0,1\}^{k+1}$. Fix an $x\in \{0,1\}^{{k+1}}$, and as before, let $Q=(-C_{\alpha,k}\sqrt{\log n},C_{\alpha,k}\sqrt{\log n}]^{2^{k+1}}$, where $C_{\alpha,k}$ is as chosen in \cref{subsec:outliers} (so, we have say $\mc L_x(Q^c)\le n^{-2}$).

We wish to apply \cref{lem:empirical-boxes}. To this end, we shall, for an arbitrary pair of vertices $u$ and $v$, study conditional probabilities of the form
\[\Pr\left(\deg_{V_{(z,b)}}(v)=t\,\middle|\,N_{W_z}(u)=T\right),\]
where $z\in \{0,1\}^{{k}}$, $b\in \{0,1\}$, and $T$ is a set of $\deg_{W_z}(u)$ elements of $W_z\setminus \{u\}$. Let $R_{(z,b)}=\left\{t:\left|t-|V_{(z,b)}|/2\right|\le C_{\alpha,k}\sqrt{n \log n}\right\}$. We will show that for such data $u,v,z,b$, and each $t\in R_{(z,b)}$, the value of the above conditional probability is not very sensitive to the choice of $T$.

Let $y\in \{0,1\}^k$ be such that $v\in W_y$. As usual, we need to consider separately the case where $y=z$ and where $y\ne z$; in the former case, we study the degree-constrained random graph $G[W_y]$, and in the latter case we study the degree-constrained random bipartite graph $G[W_y,W_z]$. 

If $y=z$, then having conditioned on the event $N_{W_y}(u)=T$, now $G[W_y\setminus\{u\}]$ is a random graph with a particular degree sequence (namely, the degree sequence where we delete $u$ if it is in $W_y$, and if so we also decrement the degree of every vertex in $T$ by one). Considering how this degree sequence varies for different choices of $T,T'$, it follows from part~\eqref{enumcons2} of \cref{lem:enum-consequence} (and the first part of \cref{def:close}) that for each $u,v,z,b$ as above, each $t\in R_{(z,b)}$, and each such pair $T,T'$, we have \[\Pr\left(\deg_{V_{(z,b)}}(v)=t\,\middle|\,N_{W_z}(u)=T\right)\simeq \Pr\left(\deg_{V_{(z,b)}}(v)=t\,\middle|\,N_{W_z}(u)=T'\right).\]
We obtain the same conclusion if $z\ne y$ by considering the bipartite graph $G[W_y,W_z]$, except now relying on \cref{lem:enum-consequence-bipartite}. 

The above argument implies that for all $u,v,z,b,t$ as above, we in fact have
\[\Pr\left(\deg_{V_{(z,b)}}(v)=t\,\middle|\,N_{W_z}(u)=T\right)\simeq\Pr\left(\deg_{V_{(z,b)}}(v)=t\right).\]
Observing that all the random subgraphs of the form $G[W_y],G[W_y,W_z]$ are independent, we deduce that for any $\vec \tau,\vec \sigma\in Q$, we have 
\[\Pr(\vec d(v)=\vec \tau \text{ and } \vec d(u)=\vec \sigma)\simeq\Pr(\vec d(v)=\vec \tau)\Pr(\vec d(u)=\vec \sigma).\]
Therefore we can apply \cref{lem:empirical-boxes}, using \cref{B4} (which we have already proved) and the fact that $\mc L_x(Q^c)\le 1/n^2$ for all $x \in \{0,1\}^{k+1}$, to conclude that \cref{B2} holds with high probability.

\subsubsection{Sensitivity to the conditioned information}\label{subsec:sensitivity}
To finish, we wish to show that for all $x\in \{0,1\}^{{k+1}}$, well-behaved $\mc R$ and $\mc R'$, and almost all outcomes $\mc S$ and $\mc S'$, we have \[\operatorname{d}_{\mathrm K}\left(\mc L_x^{\mc R,\mc S},\mc L_x^{\mc R',\mc S'}\right) \le  n^{-\Omega(1)}.\]
This will complete the proof of the inductive step of \cref{prop:empirical-induction}. 

Recall the definitions of the degree vectors $\vec g(v)$ and the intermediate degree distributions $\wh{\mc D}_x$, $\mc D_x$ from \cref{subsec:intermediate}. In that subsection, we showed for all well-behaved $\mc R$ that, with high probability over $\mc S$, we have
$\operatorname{d}_{\mathrm K}(\wh{\mc D}_{x},\mc D_{x})=n^{-\Omega(1)}$. Let $c$ (depending on $\alpha,k$) be sufficiently small such that $\operatorname{d}_{\mathrm K}(\wh{\mc D}_{x},\mc D_{x})\le n^{-c}$ with high probability, and let us now call an outcome of $\mc S$ \emph{well-behaved} if this is the case for all $x\in \{0,1\}^{{k+1}}$.

Let $\pi:\RR^{\{0,1\}^{k+1}}\to \RR^{\{0,1\}^{k}}$ be the projection map $(d_x)_{x\in \{0,1\}^{k+1}}\mapsto (d_{(y,0)}+d_{(y,1)})_{y\in \{0,1\}^k}$, as was the case in \cref{subsec:anticoncentration}. If we condition on any $\mc R,\mc S$, then for any $v\in V_x$ and any $\vec \tau\in \RR^{\{0,1\}^{k+1}}$ with $\vec g(v)=\pi(\vec \tau)$, we have
\[\Pr(\vec d(v)=\vec \tau)=\prod_{y\in \{0,1\}^{k}}\Pr(\deg_{V_{(y,0)}}(v)=t_y),\]
where $(t_y-|V_{(y,0)}|/2)/\sqrt n=\tau_y$. Now, probabilities of the form $\Pr(\deg_{V_{x}}(v)=t)$ are actually not very sensitive to the specific choice of $v,t,\mc R,\mc S$, in the following sense. Suppose $\mc R,\mc S, \mc R',\mc S'$ are all well-behaved, and for some $y\in \{0,1\}^{k}$, let $v\in W_y^{\mc R}$ and $v'\in W_y^{\mc R'}$ be vertices in the `same part' with respect to $\mc R$ and $\mc R'$. Moreover, suppose that 
\[\left\|\vec g^{\mc R}(v)-\vec g^{\mc R'}(v')\right\|_\infty \le n^{1/2-\Omega(1)}.\] Then for any $x\in \{0,1\}^{{k+1}}$ and $t,t'=\pi_x n / 2\pm n^{1/2-\Omega(1)}$, by part~\eqref{enumcons2} of \cref{lem:enum-consequence} and part~\eqref{enumconsbip2} of \cref{lem:enum-consequence-bipartite} (and using the second part of \cref{def:close}), we have
\begin{equation}\Pr\left(\deg_{V_{x}^{\mc R,\mc S}}(v)=t\,\middle |\,\mc R,\mc S\right)\simeq \Pr\left(\deg_{V_{x}^{\mc R',\mc S'}}(v')=t'\,\middle |\,\mc R',\mc S'\right).\label{eq:insensitive-complicated}\end{equation}

Now, consider well-behaved data $\mc R,\mc S, \mc R',\mc S'$, and fix some $x\in \{0,1\}^{{k+1}}$. Our next objective is to construct an injective mapping $\phi$ between $V_x^{\mc R,\mc S}$ and $V_x^{\mc R',\mc S'}$ that maps a vertex $v\in V_x^{\mc R,\mc S}$ to a vertex $\phi(v)\in V_x^{\mc R',\mc S'}$ with `roughly the same statistics' as $v$. This will allow us to compare probabilities conditional on the outcomes $(\mc R,\mc S)$ with probabilities conditional on the outcomes $(\mc R',\mc S')$.

First, let $Q=(-C_{\alpha,k}\sqrt{\log n},C_{\alpha,k}\sqrt{\log n}]^{2^{k+1}}$, so by the same considerations as in \cref{subsec:outliers}, we know that 
\[\mc L_x^{\mc R,\mc S}(Q^c)\le 1/n \,\text{ and }\, \mc L_x^{\mc R',\mc S'}(Q^c)\le 1/n.\] 
Now, partition $Q$ into a collection $\mf B$ of $n^{-c/2+o(1)}$ boxes with side lengths $n^{-c/(2\cdot 2^{k+1})}$. Since $\mc R,\mc S, \mc R',\mc S'$ are all well-behaved, we have 
\[\operatorname{d}_{\mathrm K}\left(\wh{\mc D}_{x}^{\mc R,\mc S},\wh{\mc D}_{x}^{\mc R',\mc S'}\right)\le n^{-c}.\] Also, we may assume with no loss of generality that  $c$ is sufficiently small, and in particular, that $c <c_{\alpha,k}$, so by \cref{B1}, we have $|V_x^{\mc R,\mc S}|=|V_x^{\mc R',\mc S'}|\pm n^{1-c}$. It follows that, for each $B\in\mf B$, if we consider the sets 
\[V_{x}^{\mc R,\mc S}(B)=\{v\in V_x^{\mc R,\mc S}:\vec g(v)\in \pi(B)\} \,\text{ and }\, V_{x}^{\mc R',\mc S'}(B)=\{v\in V_x^{\mc R',\mc S'}:\vec g(v)\in \pi(B)\},\]
then we have
\[|V_{x}^{\mc R,\mc S}(B)|=|V_{x}^{\mc R',\mc S'}(B)|\pm O\left(n^{1-c}\right).\] 
Now, let 
\[m(B)= \min \left\{|V_{x}^{\mc R,\mc S}(B)|,|V_{x}^{\mc R',\mc S'}(B)|\right\},\] 
and let $U\subseteq V_{x}^{\mc R,\mc S}$ be obtained by choosing $m(B)$ elements from each $V_{x}^{\mc R,\mc S}(B)$ for $B \in \mf B$, so that 
\[|U| \ge |V_{x}^{\mc R,\mc S}|-O\left(n^{1-c/2+o(1)}\right).\] 
Let $\phi:U\to V_x^{\mc R',\mc S'}$ be an injection such that $\phi(v)\in V_{x}^{\mc R',\mc S'}(B)$ for each $v\in U\cap V_{x}^{\mc R,\mc S}(B)$. Each $B\in \mf B$ has $\ell^\infty$-diameter $O(n^{-c/(2\cdot 2^{k+1})})$, so applying \cref{eq:insensitive-complicated} and summing over points in $B$, we see for all $v\in U$ that
\[\Pr\left(\vec d(v)\in B\,|\,\mc R,\mc S\right )=(1\pm n^{-c'})\Pr\left(\vec d(\phi(v))\in B\,|\,\mc R',\mc S'\right),\]
for some $c'>0$ depending on $c$ and $k$. Now, if we coarsen $\mf B$ into a partition $\mf B'$ of $n^{-c'/2+o(1)}$ boxes with side lengths at most $n^{-c'/(2\cdot 2^{k+1})}$, then we easily see that the conditions of \cref{lem:box-kolmogorov} are satisfied, and we deduce that $\operatorname{d}_{\mathrm K}(\mc L_x^{\mc R,\mc S},\mc L_x^{\mc R',\mc S'}) \le n^{-\Omega(1)}$ as desired. This finishes the inductive proof of \cref{prop:empirical-induction}.

\section*{Acknowledgements}
The first author was supported in part by NSF grants DMS-1954395 and DMS-1953799. The second author was supported by NSF grant DMS-1953990. The third author was supported by NSF grant DMS-180052. The fourth author and fifth author were both supported by NSF Graduate Research Fellowship Program DGE-1745302.
\bibliographystyle{amsplain_initials_nobysame_nomr.bst}
\bibliography{friendly_partitions.bib}

\appendix

\section{Probabilities in degree-constrained graph models}\label{app:MW}
We start by showing how \cref{lem:enum-consequence} follows from a series of results of increasing precision about random graphs with specified degree sequences. 

\begin{proposition}\label{prop:graph-bounded}
Let $(d_w)_{w\in W}$ be a sequence with even sum on a set $W$ of $n$ vertices such that, defining $\beta_w$ by $d_w = (n-1)/2+\beta_w\sqrt{(n-1)} / 2$, we have
\begin{itemize}
\item $|\beta_w| \le \log n$ for each $w \in W$, and
\item $\sum_{w \in W} \beta_w^2 \le n (\log n)^{1/9}$.
\end{itemize} 
Such a sequence is a graphic sequence for all sufficiently large $n$. Let $G$ be a uniformly random graph with this degree sequence on the vertex set $W$. Consider any fixed $v \in W$, any fixed subset $S\subseteq W$ of size $h$ satisfying $\min(h,n-h)\ge n/(\log n)^{1/8}$, and an integer $t\in[0,d_v]$. If $|t-h/2| > n^{3/5}$, then we have
	\begin{equation}\label{gr-bd-big}
	    \mb{P}(\deg_S(v) = t)  \le \exp(-\Omega((t-h/2)^2/n)).
	\end{equation}
If $|t-h/2|\le n^{3/5}$ on the other hand, then we have
        \begin{equation}\label{gr-bd-small}
        \mb{P}(\deg_S(v) = t)
		= (1\pm O(n^{-1/10}))\frac{\binom{h}{t}\binom{n-h-1}{d_v-t}}{\binom{n-1}{d_v}}\exp(\Lambda_1 - \Lambda_2 - \Lambda_3 + \Lambda_4),
		\end{equation}
where $\Lambda_1$, $\Lambda_2$, $\Lambda_3$ and $\Lambda_4$ are given by	
\begin{align*}
		\Lambda_1 &= \frac{1}{2n^2}\left(\sum_{i \in W}\beta_i\right)\left(\sum_{i\in W}\beta_i-2n\beta_v\right),\\
		\Lambda_2 &= \sum_{i\in S\setminus v}\left(1-\frac{2t}{h}\right)\frac{\beta_i}{\sqrt{n-1}}+\sum_{i\in S^c\setminus v}\left(1-\frac{2(d_v-t)}{(n-h)}\right)\frac{\beta_i}{\sqrt{n-1}},\\
		\Lambda_3 &= \frac{1}{2}\sum_{i\in W\setminus v}\frac{\beta_i^2}{n-1}, \text{ and }\\
		\Lambda_4 &= \frac{1}{2nh}\sum_{i, j\in S\setminus v}(\beta_i-\beta_j)^2 + \frac{1}{2n(n-h)}\sum_{i ,j\in S^c\setminus v}(\beta_i-\beta_j)^2,
	\end{align*}
the sums in the definition of $\Lambda_4$ being over all (unordered) two-element subsets.
\end{proposition}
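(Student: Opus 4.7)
The plan is to reduce the statement to a ratio of graph-enumeration counts and apply the sharp asymptotic formula of Canfield--Greenhill--McKay \cite{CGM08}, which refines McKay--Wormald \cite{MW90}. Let $\mathcal{N}(d)$ denote the number of simple graphs on $W$ with degree sequence $d$, and $\mathcal{N}^{-v}(\cdot)$ the analogous count on $W \setminus \{v\}$. The starting identity is
\[\mb{P}(\deg_S(v) = t) = \frac{1}{\mathcal{N}(d)} \sum_{T, T'} \mathcal{N}^{-v}\left(d - \mathbf{1}_{T \cup T'}\right),\]
where $T$ ranges over $t$-subsets of $S$ and $T'$ over $(d_v-t)$-subsets of $S^c \setminus \{v\}$, and the modified sequence decrements the degree of each element of $T \cup T'$ by one. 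Graphicality of the sequence under the stated near-regular conditions follows from the Erd\H{o}s--Gallai criterion for large $n$.

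In the regime $|\beta_w| \le \log n$ and $\sum_w \beta_w^2 \le n(\log n)^{1/9}$, the CGM formula expresses $\mathcal{N}(d)$ as a product of a binomial main term of shape $\binom{\binom{n}{2}}{M}^{-1}\prod_{w} \binom{n-1}{d_w}$ (with $2M = \sum_w d_w$) times an explicit exponential correction $\exp(-\tfrac{1}{4} + Q(\beta))$, where $Q$ is a quadratic form in the centered deviations $\beta_w$; the formula is accurate up to multiplicative error $1 + O(n^{-1/10})$. Applying this to both numerator and denominator and summing over $T, T'$, the main-term binomials telescope into exactly the hypergeometric factor
\[\frac{\binom{h}{t}\binom{n-h-1}{d_v-t}}{\binom{n-1}{d_v}},\]
which matches the naive configuration-model heuristic that $v$'s neighbourhood is uniform in $W \setminus \{v\}$.

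The heart of the proof is then to expand the exponential correction under the shift $\beta \mapsto \beta'$ induced by removing $v$ and decrementing degrees in $T \cup T'$. The quadratic form $Q$ changes by a linear piece plus a small second-order piece; when collected carefully, these assemble into the four stated quantities: $\Lambda_3$ is the quadratic residual from the MW correction applied on $W\setminus\{v\}$; $\Lambda_4$ reflects the within-block mean-centered quadratic forms from the CGM refinement applied separately to $S$ and $S^c$; $\Lambda_1$ arises from the interaction between $\beta_v$ and the global mean of $\beta$; and $\Lambda_2$ is the linear term that encodes the bias toward choosing $T,T'$ on vertices of higher $\beta_i$, which is captured by its expectation under uniform choice of $(T,T')$. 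The main technical obstacle I anticipate is showing that this expansion is essentially constant in $(T,T')$ over the range of the sum: the linear contributions depend on $(T,T')$, but their fluctuations about the uniform expectation are negligible compared to the multiplicative slack of $O(n^{-1/10})$, by a standard second-moment/Laplace argument exploiting $\sum_w \beta_w^2 \le n(\log n)^{1/9}$ and $\min(h, n-h) \ge n/(\log n)^{1/8}$.

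For the tail regime of \cref{gr-bd-big} where $|t - h/2| > n^{3/5}$, the sharp local formula breaks down but is not needed: the hypergeometric main term already decays as $\exp(-\Omega((t-h/2)^2/n))$ by Stirling, and a one-sided upper bound on the CGM correction (which remains of the form $\exp(O(\sum \beta_w^2 / n)) = \exp(O((\log n)^{1/9}))$) suffices, since $(t-h/2)^2/n \ge n^{1/5}$ dominates any polylogarithmic factor. Combining the sharp estimate in the bulk with this crude estimate in the tail yields the full proposition.
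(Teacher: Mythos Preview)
Your overall architecture---write $\mb{P}(\deg_S(v)=t)$ as a sum over neighbourhoods $T\cup T'$ of ratios of McKay--Wormald counts, telescope the binomial main terms to the hypergeometric factor, and expand the exponential correction---is exactly the paper's route (it is packaged there as Proposition~A.3). But your identification of where $\Lambda_4$ comes from, and your treatment of the $(T,T')$-dependence, are both wrong in a way that breaks the bulk estimate.

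The enumeration formula is applied once on $W$ and once on $W\setminus\{v\}$; there is no separate application to $S$ and $S^c$, and no ``within-block mean-centered quadratic'' in the correction. After the ratio, what remains is $\exp(\Lambda_1-\Lambda_3)\,\mb{E}_T[\exp(-\Lambda_T)]$ with
\[
\Lambda_T=\sum_{i\in W\setminus v}(-1)^{\mbm{1}_T(i)}\frac{\beta_i}{\sqrt{n-1}},
\]
and $T=T_1\cup T_2$ uniform on the two slices. One computes $\mb{E}_T[\Lambda_T]=\Lambda_2+O(n^{-1/3})$ and $\operatorname{Var}_T[\Lambda_T]=2\Lambda_4+O(n^{-1/4})$. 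Under the hypotheses, $\Lambda_4$ can be as large as $(\log n)^{1/9}$, so the fluctuations of the linear term are \emph{not} negligible at the $n^{-1/10}$ scale; they are precisely what produces $\Lambda_4$. The paper obtains
\[
\mb{E}_T[\exp(-\Lambda_T)]=\exp\!\bigl(-\Lambda_2+\Lambda_4\pm O(n^{-1/9})\bigr)
\]
via a Gaussian-moment lemma on the slice (Bolthausen's combinatorial CLT when $\operatorname{Var}$ is not too small, and a trivial bound when it is). Your ``second-moment/Laplace argument exploiting $\sum\beta_w^2\le n(\log n)^{1/9}$'' is the right instinct, but it must be used to \emph{compute} the variance contribution, not to declare it negligible. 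For the tail~\cref{gr-bd-big}, note also that the $T$-dependent part of the exponent has mean of order $|t-h/2|(\log n)^2/\sqrt{n}$, not merely polylog; you need to check this is dominated by $(t-h/2)^2/n$ when $|t-h/2|>n^{3/5}$, together with a sub-Gaussian bound on the slice for $\mb{E}_T[\exp(-\Lambda_T)]$.
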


First, we deduce \cref{lem:enum-consequence} from \cref{prop:graph-bounded}. To this end, we need the following lemma comparing the moments of distributions that are bounded and Kolmogorov-close.

\begin{lemma}\label{lem:kolmogorov-moment}
Fix a constant $c>0$. Let $(a_v)_{v\in V}$ and $(b_u)_{u\in U}$ be two sequences of $\Omega(n)$ real numbers with $\left|\vphantom{\deg_{V_x}}|V|-|U|\right|\le n^{1-c}$ satisfying $|a_v|,|b_u|< q$, and such that the uniform measures $\wh{\mc A},\wh{\mc B}$ on the two lists satisfy $\operatorname{d}_{\mathrm K}(\wh{\mc A},\wh{\mc B})\le n^{-c}$. Then, for all $k \in \mb{N}$, we have 
\[\left|\sum_{v\in V}a_v^k-\sum_{u\in U}b_u^k\right|=O(q^kn^{1-c}).\]
\end{lemma}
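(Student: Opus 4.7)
The plan is to reduce the comparison of these two $k$-th power sums to a comparison of the $k$-th moments of the associated uniform measures. Writing $\mu_A = \E_{\wh{\mc A}}[X^k]$ and $\mu_B = \E_{\wh{\mc B}}[X^k]$, I would begin with the decomposition
\[\sum_{v\in V} a_v^k \;-\; \sum_{u\in U} b_u^k \;=\; (|V|-|U|)\,\mu_B \;+\; |V|(\mu_A - \mu_B).\]
Since each $|a_v|,|b_u| < q$, both $|\mu_A|$ and $|\mu_B|$ are trivially at most $q^k$, so the first contribution is bounded by $q^k \cdot ||V|-|U|| \le q^k n^{1-c}$, which is acceptable. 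The whole task therefore reduces to proving that $|\mu_A - \mu_B| = O(q^k n^{-c})$, which multiplied by $|V| = O(n)$ will yield the claimed bound.

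For the moment comparison I would use the standard fact that Kolmogorov distance controls the integral of any test function of bounded variation. Concretely, let $F_A$ and $F_B$ denote the CDFs of $\wh{\mc A}$ and $\wh{\mc B}$; since both measures are supported strictly inside $[-q,q]$, we have $F_A(q)=F_B(q)=1$ and $F_A(x)=F_B(x)=0$ for all $x<-q$. Riemann--Stieltjes integration by parts then gives
\[\mu_A - \mu_B \;=\; \int_{-q}^{q} x^k \, d(F_A - F_B)(x) \;=\; -\int_{-q}^{q} k x^{k-1}(F_A(x) - F_B(x))\,dx,\]
with the boundary contributions vanishing by the observation above. Bounding $|F_A - F_B| \le n^{-c}$ pointwise and computing
\[\int_{-q}^{q} k|x|^{k-1}\,dx \;=\; 2q^k\]
yields $|\mu_A - \mu_B| \le 2q^k n^{-c}$, which completes the reduction.

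I do not anticipate any real obstacle: this is essentially the Kantorovich--Rubinstein-type observation that the total variation of $x\mapsto x^k$ on $[-q,q]$ is $2q^k$, together with the fact that Kolmogorov distance pairs with total variation of smooth test functions. The only minor care is in justifying the vanishing boundary terms, which is immediate from the strict bound $|a_v|,|b_u|<q$ ensuring both CDFs are identically $1$ at $q$ and identically $0$ just below $-q$.
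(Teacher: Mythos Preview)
Your proof is correct and essentially identical to the paper's: both arguments bound $|\mu_A-\mu_B|$ via the integration-by-parts/tail-integral identity $\mu_A-\mu_B=-\int_{-q}^{q}kx^{k-1}(F_A-F_B)\,dx$ to get the $2q^k n^{-c}$ bound, and then handle the $|V|-|U|$ discrepancy separately. The only cosmetic difference is that the paper splits the integral at $0$ and writes the averages as $\frac{1}{|V|}\sum a_v^k=\frac{1}{|U|}\sum b_u^k\pm O(q^kn^{-c})$ before invoking $|V|=(1\pm O(n^{-c}))|U|$, whereas you use the algebraic decomposition $(|V|-|U|)\mu_B+|V|(\mu_A-\mu_B)$; these are equivalent.
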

\begin{proof}
First, note that
\begin{align*}
    \frac{1}{|V|}\sum_{v\in V}a_v^k&=\int_0^{q}kt^{k-1}(1-\wh{\mc A}\left((-\infty,t])\right)\operatorname d t-\int_{-q}^{0}kt^{k-1}\wh{\mc A}((-\infty,t])\operatorname d t\\
    &=\int_0^{q}kt^{k-1}(1-\wh{\mc B}\left((-\infty,t])\right)\operatorname d t-\int_{-q}^{0}kt^{k-1}\wh{\mc B}((-\infty,t])\operatorname d t\pm O(q^kn^{-c})\\
    &=\frac{1}{|U|}\sum_{u\in U}b_u^k \pm O(q^kn^{-c}).
\end{align*}
The desired result now follows from the fact that $|V|=(1\pm O(n^{-c}))|U|$.
\end{proof}

We are now ready for the proof of \cref{lem:enum-consequence}.
\begin{proof}[Proof of \cref{lem:enum-consequence}]
We shall estimate the probabilities in question using \cref{prop:graph-bounded}. Indeed, the hypothesis in the statement of \cref{lem:enum-consequence}, in the language of \cref{prop:graph-bounded}, may be stated as
\begin{itemize}
    \item $|\beta_w| = O(\sqrt{\log n})$ (and hence $|\beta_w| \le \log n$) for each $w \in W$,\
    \item $|\sum_{w\in T} \beta_w| = O(n)$ for all $T \subseteq W$, and 
    \item $\sum_{w \in W} \beta_w^2 = O(n)$,
\end{itemize}
whence it is clear that \cref{prop:graph-bounded} applies.

For part~\eqref{enumcons1} of \cref{lem:enum-consequence}, we may argue as follows. If $|t- h/2| > n^{3/5}$, then \cref{gr-bd-big} gives us what we need. If $|t- h/2| \le n^{3/5}$, we claim that \cref{gr-bd-small} implies the bound in part~\eqref{enumcons1} of \cref{lem:enum-consequence}. To see this, it suffices to verify in this regime that each of  $|\Lambda_1|$, $|\Lambda_2|$, $|\Lambda_3|$ and $|\Lambda_4|$ are $O(|\tau|+\sqrt{\log n})$, where $\tau$ is defined by  $t = h/2 + \tau \sqrt{n}$.

Using the facts that  $|\sum_{i\in W} \beta_i| = O(n)$, and $\sum_{i \in W} \beta_i^2 = O(n)$, we may bound $|\Lambda_1|$ by
\begin{align*}
		|\Lambda_1| &= \left|\frac{1}{2n^2}\left(\sum_{i \in W}\beta_i\right)\left(\sum_{i\in W}\beta_i-2n\beta_v\right)\right|\\
		&\le  \frac{1}{2n^2}\left(\sum_{i \in W}\beta_i\right)^2 + \frac{|\beta_v|}{n}\left|\sum_{i \in W}\beta_i\right|\\
		&= O(1) + O(|\beta_v|) = O\left(\sqrt{\log n}\right).
	\end{align*}
Next, we bound $|\Lambda_2|$ using the facts that $h, n-h  = \Omega(n)$, $|\sum_{i\in S \setminus v} \beta_i| = O(n)$ and $|\sum_{i\in S^c \setminus v} \beta_i| = O(n)$ by 
\begin{align*}
	|\Lambda_2| &\le \left|\sum_{i\in S\setminus v}\left(1-\frac{2t}{h}\right)\frac{\beta_i}{\sqrt{n-1}}\right|+\left|\sum_{i\in S^c\setminus v}\left(1-\frac{2(d_v-t)}{(n-h)}\right)\frac{\beta_i}{\sqrt{n-1}}\right|,\\
		&\le O\left(|\tau| / n\right) \left|\sum_{i\in S\setminus v} \beta_i \right| + O\left(|\tau|/ n + \sqrt{\log n}/ n\right) \left|\sum_{i\in S^c\setminus v} \beta_i \right|\\
		&= O(|\tau| + \sqrt{\log n}).
	\end{align*}
Finally, since $\sum_{i \in W} \beta_i^2 = O(n)$, it is immediate that $|\Lambda_3| = O(1)$, and it follows from the facts that $\sum_{i \in W} \beta_i^2 = O(n)$, $|\sum_{i\in S \setminus v} \beta_i| = O(n)$ and $|\sum_{i\in S^c \setminus v} \beta_i| = O(n)$ that $|\Lambda_4| = O(1)$ as well.

For part~\eqref{enumcons2} of \cref{lem:enum-consequence}, it is sufficient to verify that the expression in~\eqref{gr-bd-small} is polynomially-stable when the parameters in question vary by the amounts specified in the statement of \cref{lem:enum-consequence}; here, we say that an expression is polynomially-stable if it varies by at most a multiplicative factor of $1 \pm n^{-\Omega(1)}$. This may be done term by term, as we outline below.

Suppose $(d'_w)_{w \in W'}$, $|W'| = n'$, $v' \in W'$, $S' \subseteq W'$, $|S'| = h'$ and $t'$ satisfy the hypothesis in the statement of the lemma, and additionally, are such that
    \begin{itemize}
        \item $|t-t'|,|d_v-d_{v'}'|\le n^{1/2-\Omega(1)}$,
        \item $(d_w)_{w\in S}$ and $(d'_w)_{w\in S'}$ are proximate, and
        \item $(d_w)_{w\in W\setminus S}$ and $(d'_w)_{w\in W'\setminus S'}$ are proximate.
        \item $|n - n'|, |h - h'| \le n^{1-\Omega(1)}$, this being a consequence of the previous two points.
    \end{itemize}
In the regime where $h, n-h = \Omega(n)$, $d = n/2 \pm O(\sqrt{n \log n})$, $t = h/2 \pm O(\sqrt{n \log n})$, the expression \[\binom{h}{t}\binom{n-h-1}{d-t}\binom{n-1}{d}^{-1}\] 
is polynomially-stable when $n$ and $h$ vary by $n^{1-\Omega(1)}$, and $d$ and $t$ vary by $n^{1/2-\Omega(1)}$, which in particular tells us that
\[\binom{h}{t}\binom{n-h-1}{d_v-t}\binom{n-1}{d_v}^{-1} \simeq \binom{h'}{t'}\binom{n'-h'-1}{d'_{v'}-t'}\binom{n'-1}{d'_{v'}}^{-1}.\] 
This can be seen via a careful (and rather tedious) application of Stirling's approximation, or alternately, by using a sufficiently precise form of the de Moivre–-Laplace normal approximation, as in~\cite{ZS13} for example.

Next, we need to verify that each of $\exp(\Lambda_1)$, $\exp(-\Lambda_2)$, $\exp(-\Lambda_3)$ and $\exp(\Lambda_4)$ are similarly polynomially-stable, and this may be accomplished in a straightforward manner using \cref{lem:kolmogorov-moment}. To illustrate, we spell out the details for $\exp(-\Lambda_3)$ below.

Recall that
\[	\Lambda_3 = \frac{1}{2}\sum_{i\in W\setminus v}\frac{\beta_i^2}{n-1} = \frac{1}{2}\sum_{i\in W}\frac{\beta_i^2}{n-1} \pm O(\log n/n).\]
Our goal is to show, with $\beta'_i$ defined by $d'_i = (n'-1)/2+\beta'_i\sqrt{(n'-1)} / 2$ for $i \in W'$, that 
\[	\Lambda'_3 = \frac{1}{2}\sum_{i\in W'\setminus v'}\frac{(\beta'_i)^2}{n'-1} = \frac{1}{2}\sum_{i\in W'}\frac{(\beta'_i)^2}{n'-1} \pm O(\log n/n)\]
is close enough to $\Lambda_3$ to ensure $\exp(-\Lambda_3) \simeq \exp(-\Lambda'_3)$. 

Since $(d_w)_{w\in S}$ and $(d_w')_{w\in S'}$ are proximate, we claim that
\[ \left|\sum_{i \in S} \beta^2_i - \sum_{i \in S'}(\beta'_i)^2\right| \le n^{1-\Omega(1)}.\]
This is true with room to spare if the two sequences are proximate on account of the first part of \cref{def:close}, since in this case, we know that 
\[ \sum_{i \in S} \left|\beta_i - \beta'_{\psi(i)}\right| = O(\sqrt{n})\]
for some bijection $\psi:S \to S'$, from which it follows that 
\[ \left| \sum_{i \in S} \beta^2_i - \sum_{i \in S'}(\beta'_i)^2\right| \le \left(\max_{i \in S} \left|\beta_i + \beta'_{\psi(i)}\right|\right) \left( \sum_{i \in S} \left|\beta_i - \beta'_{\psi(i)}\right|\right) = O(\sqrt{n \log n }).\]
If the two sequences are proximate on account of the second part of \cref{def:close}, then since $|n - n'| \le n^{1-\Omega(1)}$, it is easily checked that the Kolmogorov distance between the uniform measures on $(\beta_i)_{i\in S}$ and $(\beta'_i)_{i\in S'}$ is at most $n^{-\Omega(1)}$, so by \cref{lem:kolmogorov-moment} (with $k = 2$ and $q = \log n$), we have 
\[ \left|\sum_{i \in S} \beta^2_i - \sum_{i \in S'}(\beta'_i)^2\right| \le n^{1-\Omega(1)}\]
as claimed. Reasoning similarly about the proximate pair $(d_w)_{w\in W\setminus S}$ and $(d_w')_{w\in W'\setminus S'}$, we deduce that
\[ \left|\sum_{i \in W\setminus S} \beta^2_i - \sum_{i \in W'\setminus S'}(\beta'_i)^2\right| \le n^{1-\Omega(1)}\]
as well. Putting these pair of estimates together shows that $|\Lambda_3 - \Lambda'_3| \le n^{-\Omega(1)}$, whence it is clear that $\exp(-\Lambda_3) \simeq \exp(-\Lambda'_3)$.

The details in the other three cases (i.e., for $\Lambda_1$, $\Lambda_2$ and $\Lambda_4$) are similar, and we leave them to the reader.
\end{proof}

\cref{prop:graph-bounded} is a consequence of the following more general statement, the proof of which will be given in \cref{app:Proof} once we have collected the requisite machinery in \cref{app:enumresults}.

\begin{proposition}\label{prop:graph-expectation}
	Let $(d_w)_{w\in W}$ be a sequence with even sum on a set $W$ of $n$ vertices such that, defining $\beta_w$ by $d_w = (n-1)/2+\beta_w\sqrt{(n-1)} / 2$, we have  $|\beta_w| \le \log n$ for each $w \in W$. Such a sequence is a graphic sequence for all sufficiently large $n$. Let $G$ be a uniformly random graph with this degree sequence on the vertex set $W$. For any fixed $v \in W$, $S\subseteq W$ of size $h$ satisfying $\min(h,n-h)\ge n/(\log n)^{1/8}$, and an integer $t\in[0,d_v]$, we have
	\[
		\mb{P}(\deg_S(v) = t)=(1\pm O(n^{-1/6}))\frac{\binom{h-\mbm{1}_S(v)}{t}\binom{n-h-\mbm{1}_{S^c}(v)}{d_v-t}}{\binom{n-1}{d_v}} \exp(\Lambda_1 - \Lambda_3)
		\mb{E}_T\left[\exp(-\Lambda_T)\right],
	\]
	where $T = T_1 \cup T_2$ is a random set chosen by picking $T_1$ uniformly from $\binom{S\setminus v}{t}$ and $T_2$ uniformly from $\binom{S^c\setminus v}{d_v-t}$, and where $\Lambda_1$, $\Lambda_3$ and $\Lambda_T$ are given by	
\begin{align*}
		\Lambda_1 &= \frac{1}{2n^2}\left(\sum_{i \in W}\beta_i\right)\left(\sum_{i\in W}\beta_i-2n\beta_v\right),\\
		\Lambda_3 &= \frac{1}{2}\sum_{i\in W\setminus v}\frac{\beta_i^2}{n-1}, \text{ and }\\
    	\Lambda_T &= \sum_{i\in W\setminus v}(-1)^{\mbm{1}_T(i)}\frac{\beta_i}{\sqrt{n-1}}.
	\end{align*}
\end{proposition}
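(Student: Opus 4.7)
My plan would be to reduce the problem to estimating a ratio of graph enumeration counts, and then to invoke the McKay--Wormald / Canfield--Greenhill--McKay asymptotic formula to compute that ratio up to a multiplicative $1 \pm n^{-\Omega(1)}$ error. The first step is to condition on the neighborhood of $v$: for any admissible $T = T_1 \cup T_2$ with $T_1 \in \binom{S \setminus \{v\}}{t}$ and $T_2 \in \binom{S^c \setminus \{v\}}{d_v - t}$, the event $N_G(v) = T$ is equinumerous with the set of graphs on $W \setminus \{v\}$ having degree sequence $d'_w = d_w - \mbm{1}_T(w)$. Letting $N_0 = N((d_w)_{w \in W})$ and $N_T = N((d'_w)_{w \in W \setminus \{v\}})$, we obtain
\[
\Pr(\deg_S(v) = t) = \binom{h - \mbm{1}_S(v)}{t}\binom{n - h - \mbm{1}_{S^c}(v)}{d_v - t}\,\E_T\!\left[\frac{N_T}{N_0}\right],
\]
where $T$ is sampled exactly as in the statement. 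The task thus reduces to estimating $N_T/N_0$ uniformly in $T$, factoring the result into a $T$-independent piece and a piece depending on $T$ only through $\Lambda_T$, and then averaging.

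The second step applies the enumeration formula to both $N_0$ and $N_T$. That formula expresses each count as a combinatorial main term (essentially a product of binomials $\binom{n-1}{d_w}$ divided by a large binomial in the total number of edges) multiplied by an exponential correction which is a quadratic functional of the deviations $\beta_w$. Dividing, the main combinatorial factors telescope to produce $\binom{n-1}{d_v}^{-1}$, arising precisely from the removal of the single vertex $v$. For the exponential corrections, restricting the quadratic sum $\tfrac{1}{2}\sum_{i \in W}\beta_i^2/(n-1)$ from $W$ to $W \setminus \{v\}$ yields $-\Lambda_3$, while the shift in mean density arising from removing $v$ and from decrementing the degrees on $T$ contributes, after collecting the $T$-independent part, exactly $\Lambda_1$ (this is the natural combination that appears because $v$ contributes $\beta_v$ once and each decrement of a vertex in $T$ shifts the relevant $\beta_w$ by $-2/\sqrt{n-1}$). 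The remaining $T$-dependent first-order contribution then aggregates the $\pm\beta_i/\sqrt{n-1}$ pieces coming from each decremented vertex and its unchanged complement into precisely $-\Lambda_T$.

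Finally, averaging over $T$ pulls the $T$-independent factor $\exp(\Lambda_1 - \Lambda_3)$ out and leaves $\E_T[\exp(-\Lambda_T)]$, which together with the combinatorial prefactor gives the claimed identity. The main obstacle will be the quantitative error control. Each invocation of the enumeration formula carries a small multiplicative error; the Taylor expansion isolating the $\beta_w$-linear contribution per decremented vertex produces $O(\beta_w^2/n)$ second-order terms which must be summed over $|T| = d_v = \Theta(n)$ vertices, and these have to be bounded using $|\beta_w| \le \log n$ in order to keep the total error uniform. The regime $\min(h, n-h) \ge n/(\log n)^{1/8}$ is needed to ensure that the hypergeometric-type combinatorial ratios involved remain well-behaved. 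Assembling all of these estimates to achieve the uniform $1 \pm O(n^{-1/6})$ multiplicative bound is the delicate bookkeeping portion of the argument.
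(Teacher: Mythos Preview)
Your strategy is the paper's: fix $T\subseteq W\setminus\{v\}$ with $|T|=d_v$, write $\Pr(N(v)=T)=N_T/N_0$ as a ratio of McKay--Wormald counts, extract the $T$-independent and $T$-dependent pieces, and then average over $T$ with the prescribed $(T_1,T_2)$ split.

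Your attribution of the individual $\Lambda$ terms is off in a way that would trip up the execution, though. The correction factor $\exp\bigl(\tfrac14-\gamma_2^2/(4\mu^2(1-\mu)^2)\bigr)$ in the enumeration formula is \emph{not} the source of $-\Lambda_3$: one checks $\gamma_2^2(T)=\gamma_2^2\pm O(n^{-1/4})$, so that factor contributes only $1\pm O(n^{-1/4})$ to the ratio $N_T/N_0$ and is absorbed into the error. Instead, $-\Lambda_3$ is precisely the aggregate of the ``second-order terms'' you flagged as needing to be bounded: the per-vertex ratio $\binom{n-2}{d_i-\mbm{1}_T(i)}/\binom{n-1}{d_i}$ equals $\tfrac12(1\pm\beta_i/\sqrt{n-1})$, and upon taking logarithms the quadratic part $-\tfrac12\beta_i^2/(n-1)$ is independent of the sign, so summing over all $i\in W\setminus v$ gives exactly $-\Lambda_3$ rather than a negligible remainder (the linear part gives $-\Lambda_T$, as you say). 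The term $\Lambda_1$ comes from neither of these; it arises from a separate Stirling-type estimate for the ratio of the global factors $\binom{n(n-1)/2}{m}\binom{n(n-1)}{2m}^{-1}$ before and after deleting $v$, and this step is what drives the overall $n^{-1/6}$ precision. With these reattributions your plan goes through.
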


 To proceed, we will need to understand expressions as appearing in the right side of \cref{prop:graph-expectation}. To this end, we state two general results about sums of random variables constrained to live on a ``slice'' of the Boolean hypercube.
\begin{lemma}\label{lem:slice-subgaussian}
Let $a_1,\ldots,a_n\in\mb{R}$ and let $X = \sum_{i=1}^na_i\xi_i$, where $\xi = (\xi_1,\ldots,\xi_n)$ is uniform on the subset of binary vectors in $\{0,1\}^n$ which have sum $s$. Writing $\eta^2 = \sum_{i=1}^na_i^2-(\sum_{i=1}^na_i)^2/n$, we have
\[\mb{P}(|X-\mb{E}[X]|\ge t)\le 2\exp(-t^2/(4\eta^2))\]
and
\[\mb{E}\left[e^{X-\mb{E}[X]}\right]\le 2e^{O(\eta^2)}.\]
\end{lemma}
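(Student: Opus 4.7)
The plan is a standard concentration-on-the-slice argument: center the coefficients, bound the moment generating function (MGF) by comparison with a sum of independent random variables, and finish with Chernoff.

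First, since $\sum_i \xi_i = s$ is deterministic, $X - \mb{E}[X] = \sum_i (a_i - \bar a)\xi_i$ where $\bar a = n^{-1}\sum_i a_i$. Replacing $a_i$ by $a_i - \bar a$ thus leaves $X - \mb{E}[X]$ unchanged, so I may assume $\sum_i a_i = 0$, in which case $\mb{E}[X] = 0$ and $\eta^2 = \sum_i a_i^2$.

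Second, the heart of the argument is the MGF estimate $\mb{E}[\exp(\lambda X)] \le \exp(O(\lambda^2 \eta^2))$. I would establish this via Hoeffding's classical 1963 convex-order comparison: for any convex $\phi$,
\[\mb{E}[\phi(X)] \le \mb{E}[\phi(Y_1 + \dots + Y_s)],\]
where $Y_1, \dots, Y_s$ are drawn i.i.d.\ uniformly from $\{a_1, \dots, a_n\}$. Applied with $\phi(x) = e^{\lambda x}$ this gives $\mb{E}[e^{\lambda X}] \le (\mb{E}[e^{\lambda Y_1}])^s$, and the single-variable MGF $\mb{E}[e^{\lambda Y_1}] = n^{-1} \sum_i e^{\lambda a_i}$ can be estimated by Taylor expansion: using $e^x \le 1 + x + x^2$ on $|x| \le 1$, together with $\mb{E}[Y_1] = 0$ and $\mb{E}[Y_1^2] = \eta^2/n$, one gets $\mb{E}[e^{\lambda Y_1}] \le \exp(\lambda^2 \eta^2 / n)$ whenever $|\lambda| \le 1/\max_i |a_i|$. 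Raising to the $s$th power and using $s \le n$ then yields the required bound on this range of $\lambda$; for larger $\lambda$ the Chernoff bound of interest is already weaker than the trivial deterministic bound $|X| \le \sum_i |a_i| \le \sqrt{n}\,\eta$.

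Third, Markov's inequality $\mb{P}(X \ge t) \le e^{-\lambda t}\mb{E}[e^{\lambda X}]$ combined with a symmetric argument for $-X$, optimised at $\lambda = t/(2\eta^2)$, delivers the tail bound $\mb{P}(|X - \mb{E}X| \ge t) \le 2\exp(-t^2/(4\eta^2))$; the moment bound $\mb{E}[e^{X - \mb{E}X}] \le 2 e^{O(\eta^2)}$ follows immediately by specialising the MGF estimate to $\lambda = 1$. The main technical delicacy is ensuring that the MGF exponent scales as $\eta^2$ rather than the coarser $s \cdot \max_i a_i^2$ that a naive application of Hoeffding's lemma for bounded random variables would produce; the Taylor-expansion step above handles this by exploiting the variance $\eta^2/n$ of each $Y_k$, and a cleaner alternative valid uniformly in $\lambda$ would be to apply Bennett's inequality, or to invoke the Lee--Yau log-Sobolev inequality for the slice directly.
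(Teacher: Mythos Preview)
Your route is genuinely different from the paper's. The paper deduces the sub-Gaussian tail directly from the Azuma--Hoeffding inequality (citing a specific martingale construction in \cite[Lemma~2.2]{KST19}) and then \emph{integrates the tail} to obtain $\mb E[e^{X-\mb E X}]\le 2e^{O(\eta^2)}$; you go the other way, bounding the MGF first via Hoeffding's without-versus-with-replacement comparison and then applying Chernoff. The convex-order reduction to an i.i.d.\ sum is a nice idea that makes the parameter $\eta^2$ appear naturally.

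There is, however, a genuine gap in your handling of large $t$. The Taylor estimate $\mb E[e^{\lambda Y_1}]\le\exp(\lambda^2\eta^2/n)$ is only valid for $|\lambda|\le 1/\max_i|a_i|$, so Chernoff at the optimiser $\lambda=t/(2\eta^2)$ covers only $t\le 2\eta^2/\max_i|a_i|$. Your claim that the deterministic bound $|X|\le\sqrt n\,\eta$ takes over from there is not correct: since $\max_i|a_i|$ may be as large as $\eta$ (take one centred coefficient near $\eta$ and the rest of order $1/n$), the Taylor--Chernoff range can end near $t\approx 2\eta$ while the deterministic bound only enters near $t\approx\sqrt n\,\eta$, and neither argument addresses the interval in between. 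Your suggested patch via Bennett's inequality does not close this gap either: Bennett yields a Bernstein-type tail of the form $\exp\bigl(-c\min(t^2/\eta^2,\,t/\max_i|a_i|)\bigr)$, which in that same regime is strictly weaker than the claimed $\exp(-t^2/(4\eta^2))$. If you want to retain your overall strategy, one clean repair is to first establish the tail bound by a martingale argument (as the paper does) and then observe that integrating the tail gives the second statement.
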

\begin{proof}
The first part follows from the Azuma--Hoeffding inequality, as outlined in~\cite[Lemma~2.2]{KST19}, for example. The second part follows from integrating the first; see~\cite[Proposition~2.5.2]{Ver18}.
\end{proof}
\begin{lemma}\label{lem:slice-moment}
Let $a_1,\ldots,a_n\in\mb{R}$ and let $X = \sum_{i=1}^na_i\xi_i$, where $\xi = (\xi_1,\ldots,\xi_n)$ is uniform on the subset of $\{0,1\}^n$ with sum $s$ such that $\min(s,n-s)\ge n(\log n)^{-2}$. Suppose that $|a_i|\le n^{-1/2}(\log n)^2$ and $\eta^2 = \sum_{i=1}^na_i^2-(\sum_{i=1}^na_i)^2/n\le \sqrt{\log n}$. Then we have
\[ \mb{E}\left[e^X\right] = \exp\left(\mb{E}[X] + \frac{1}{2}\on{Var}[X] \pm O(n^{-1/9})\right).\]
\end{lemma}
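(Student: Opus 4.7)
My plan is to compute the slice MGF by reducing to iid Bernoullis, performing an exponential tilt, and applying a quantitative local CLT. Set $p = s/n$ and $q = 1-p$, so by hypothesis $p, q \ge (\log n)^{-2}$. The slice distribution coincides with the product measure $\mathrm{Ber}(p)^{\otimes n}$ conditioned on $\Sigma := \sum_i \xi_i = s$, whence
\[
\mb{E}[e^X] = \frac{\mb{E}_{\mathrm{iid}}[e^X \mbm{1}_{\Sigma=s}]}{\mb{P}_{\mathrm{iid}}(\Sigma=s)} = M \cdot \frac{\wt{\mb{P}}(\Sigma=s)}{\mb{P}_{\mathrm{iid}}(\Sigma=s)},
\]
where $M := \prod_i(q + p e^{a_i}) = \mb{E}_{\mathrm{iid}}[e^X]$ and, under the tilted product measure $\wt{\mb{P}}$, the $\xi_i$ become independent Bernoullis with parameters $\wt{p}_i := p e^{a_i}/(q + p e^{a_i})$. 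The task then reduces to computing $\log M$ directly and controlling the probability ratio via an LCLT.

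The Taylor expansion of $\log M$ is straightforward: since $|a_i| \le n^{-1/2}(\log n)^2$,
\[
\log(q + p e^{a_i}) = p a_i + \tfrac{pq}{2} a_i^2 + O(|a_i|^3),
\]
so $\log M = p\sum_i a_i + \tfrac{pq}{2}\sum_i a_i^2 + O(n^{-1/3})$, using the crude bound $\sum a_i^2 \le n \max_i a_i^2 \le (\log n)^4$ to control $\sum |a_i|^3$. For the probability ratio, Stirling gives $\mb{P}_{\mathrm{iid}}(\Sigma=s) = (2\pi pqn)^{-1/2}(1 + O(1/n))$; and since $\wt{p}_i = p + pq a_i + O(a_i^2)$, the tilted mean and variance satisfy $\wt{\mu} = s + pq\sum a_i + O((\log n)^4)$ and $\wt{\sigma}^2 = pqn(1 + o(1))$, with each $\wt{p}_i(1-\wt{p}_i) = \Omega((\log n)^{-2})$. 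A quantitative Fourier-inversion LCLT (in the style of Esseen) for sums of independent Bernoullis then yields
\[
\wt{\mb{P}}(\Sigma=s) = (2\pi\wt{\sigma}^2)^{-1/2}\exp\!\left(-\frac{(s-\wt{\mu})^2}{2\wt{\sigma}^2}\right)\bigl(1 + (\log n)^{O(1)}n^{-1/2}\bigr).
\]

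Combining these pieces and expanding $(s-\wt{\mu})^2/(2\wt{\sigma}^2) = pq(\sum a_i)^2/(2n) + O(n^{-1/3})$ gives
\[
\log \mb{E}[e^X] = p\sum_i a_i + \tfrac{pq}{2}\Bigl[\sum_i a_i^2 - \tfrac{1}{n}\bigl(\sum_i a_i\bigr)^2\Bigr] \pm O(n^{-1/9}).
\]
Finally, a direct slice calculation using $\on{Cov}(\xi_i,\xi_j) = -pq/(n-1)$ yields $\mb{E}[X] = p\sum a_i$ and $\on{Var}[X] = \tfrac{n}{n-1}\,pq\,\eta^2$, so the main term above equals $\mb{E}[X] + \tfrac{1}{2}\on{Var}[X]$ up to a discrepancy $\tfrac{pq\eta^2}{2(n-1)} = O(\sqrt{\log n}/n)$, which is absorbed into the $O(n^{-1/9})$ tolerance.

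The chief technical obstacle is the LCLT with sufficient relative precision for heterogeneous Bernoullis. However, because $\wt{\sigma}^2 = \Theta(n/(\log n)^2)$ is comfortably large and the tilted parameters remain bounded away from $0,1$ by a $\mathrm{polylog}(n)^{-1}$ margin, the Fourier-inversion proof (or equivalently a one-term Edgeworth expansion) delivers relative error $(\log n)^{O(1)} n^{-1/2}$, far below the $n^{-1/9}$ target. A secondary nuisance is bookkeeping the several error sources (Taylor of $\log M$, first-order $\wt{p}_i$ expansion, Stirling, LCLT); each sits below $n^{-1/9}$ in the log by the polylog hypotheses on $\eta$ and $\max_i |a_i|$, and the $\eta^2 \le \sqrt{\log n}$ assumption is essentially only used to ensure $\on{Var}[X]$ is well-controlled so that the $n/(n-1)$ factor above is negligible.
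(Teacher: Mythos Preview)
Your approach is correct and genuinely different from the paper's. The paper works directly on the slice: it truncates the Laplace integral $\int_{-\infty}^{\infty} e^t\,\Pr(X-\mathbb{E}[X]\ge t)\,dt$ at $t\asymp\eta\sqrt{\log n}$ using the subgaussian tail from the preceding lemma, then either bounds it trivially when $\sigma\le n^{-1/8}$ or, when $\sigma>n^{-1/8}$, replaces the tail function by a Gaussian via Bolthausen's combinatorial Berry--Esseen theorem and integrates explicitly. You instead pass to the product measure $\operatorname{Ber}(p)^{\otimes n}$ conditioned on $\Sigma=s$, exponentially tilt, and reduce to a ratio of Poisson-binomial point probabilities; this makes the emergence of $\mathbb{E}[X]+\tfrac12\operatorname{Var}[X]$ essentially automatic (as the Gaussian exponent in the tilted local limit), at the price of introducing an auxiliary product measure and a less self-contained analytic input.

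One technical point deserves tightening. Since $|\sum_i a_i|$ can be as large as $n^{1/2}(\log n)^2$ while $\tilde\sigma^2=\Theta(pqn)$, the standardised distance $|s-\tilde\mu|/\tilde\sigma$ can be of order $(\log n)^{O(1)}$, so you are in a moderate-deviation regime. An Esseen-style LCLT or a one-term Edgeworth expansion gives only an \emph{additive} error $O(\tilde\sigma^{-2})$, which does not yield multiplicative control when the target density is $\tilde\sigma^{-1}\exp(-(\log n)^{O(1)})$. What you actually need is a local limit theorem with relative error in the Cram\'er zone; the cleanest fix is to perform a second tilt so that the tilted mean lands exactly at $s$ (then the LCLT at the mean has relative error $O(\tilde\sigma^{-1})$ trivially), and absorb the resulting change in $M$ into the bookkeeping. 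This is routine and well within your framework, but ``in the style of Esseen'' undersells it.
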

\begin{proof}
Writing $\sigma^2 = \on{Var}[X]$, we clearly have
\begin{align*}
\sigma^2&=\sum_{i\neq j}a_ia_j(\mb{E}[\xi_i\xi_j]-\mb{E}[\xi_i]\mb{E}[\xi_j]) + \sum_i a_i^2(\mb{E}[\xi_i^2]-\mb{E}[\xi_i]^2)\\
&=\sum_{i\neq j}a_ia_j\left(\frac{s(s-1)}{n(n-1)}-\frac{s^2}{n^2}\right) + \sum_i a_i^2\left(\frac{s}{n}-\frac{s^2}{n^2}\right)=\frac{s(n-s)}{n(n-1)}\eta^2.
\end{align*}

First, by \cref{lem:slice-subgaussian}, we have
\[\mb{P}[|X-\mb{E}[X]|\ge t]\le 2\exp(-t^2/(4\eta^2))\]
for all $t\ge 0$. Now
\begin{align*}
\mb{E}\left[e^{X-\mb{E}[X]}\right] &= \int_{-\infty}^\infty e^t\mb{P}(X-\mb{E}[X]\ge t)dt\\
&=\int_{-\infty}^{8\eta\sqrt{\log n}}e^t\mb{P}(X-\mb{E}[X]\ge t)dt +O\left(\int_{8\eta\sqrt{\log n}}^\infty e^{t-t^2/(4\eta^2)}dt\right)\\
&=\int_{-\infty}^{8\eta\sqrt{\log n}}e^t\mb{P}(X-\mb{E}[X]\ge t)dt +O\left(\int_{8\eta\sqrt{\log n}}^\infty e^{-t^2/(8\eta^2)}dt\right)\\
&=\int_{-\infty}^{8\eta\sqrt{\log n}}e^t\mb{P}(X-\mb{E}[X]\ge t)dt+ O(n^{-4}).
\end{align*}
If $\sigma\le n^{-1/8}$, then $\eta$ is similarly bounded and we obtain an upper bound of the form $1+O(n^{-1/9})$. Combining with $\mb{E}e^X\ge e^{\mb{E}X}$, the result follows. If $\sigma > n^{-1/8}$, then a combinatorial central limit theorem of Bolthausen~\cite{Bol84} shows that
\[\on{d}_{\mr{K}}(X-\mb{E}[X],\mc{N}(0,\sigma^2)) = O\left(\sum_{i=1}^n |a_i|^3/\sigma^3\right) = O(n^{-2/17}).\]
This allows us the replace the integrand above with the cumulative distribution function of a Gaussian, and we easily derive
\[\mb{E}\left[e^{X-\mb{E}[X]}\right] = e^{\frac{\sigma^2}{2}} \pm O\left(n^{-2/17} e^{\eta\sqrt{\log n}}\right) = \exp(\sigma^2/2 \pm O(n^{-1/9})).\qedhere\]
\end{proof}

\cref{prop:graph-bounded} is now easily deduced from \cref{prop:graph-expectation}.
\begin{proof}[Proof of \cref{prop:graph-bounded}]
	With $T = T_1 \cup T_2$ a random set chosen by picking $T_1$ uniformly from $\binom{S\setminus v}{t}$ and $T_2$ uniformly from $\binom{S^c\setminus v}{d_v-t}$, we have
	\begin{align*}
		\mb{E}_T[\Lambda_T] &= \mb{E}_T \left[\sum_{i\in W\setminus v}(-1)^{\mbm{1}_T(i)}\frac{\beta_i}{\sqrt{n-1}}\right]\\
		&= \sum_{i\in S\setminus v}\left(1-\frac{2t}{h}\right)\frac{\beta_i}{\sqrt{n-1}}+\sum_{i\in S^c\setminus v}\left(1-\frac{2(d_v-t)}{(n-h)}\right)\frac{\beta_i}{\sqrt{n-1}} \pm O(n^{-1/3})\\
		&= \Lambda_2 \pm O(n^{-1/3}),
	\end{align*}
	where the small additive error term comes from the fact that whether $v\in S$ or $v\in S^c$ slightly change the fractions listed above, but not by much.
	
	At this point, if $|t-h/2| > n^{3/5}$, we have
	\[\frac{\binom{h-\mbm{1}_S(v)}{t}\binom{n-h-\mbm{1}_{S^c}(v)}{d_v-t}}{\binom{n-1}{d_v}}\le\exp(-\Omega((t-h/2)^2/n))\]
	by a standard tail bound for the hypergeometric distribution (see~\cite[Theorem~2.10]{JLR00}, for example). Since $|\beta_w| \le \log n$ for each $w \in W$, clearly both $|\Lambda_1|$ and $|\Lambda_3|$ are $O((\log n)^2)$, whence $\exp(\Lambda_1 - \Lambda_3) \le \exp(O((\log n)^4))$, and we are left with estimating $\mb{E}[\exp(-\Lambda_T)]$. Now \cref{lem:slice-subgaussian} demonstrates \[\mb{E}[\exp(-\Lambda_T)]\le\exp(\mb{E}[-\Lambda_T] + O((\log n)^2)),\] 
	since the coefficient variance in $-\Lambda_T$ is $O((\log n)^2/n)$ by the given conditions. The above explicit expression for $\mb{E}[\Lambda_T]$ demonstrates that
	\[|\mb{E}[-\Lambda_T]| = O\left(\frac{|t-h/2|(\log n)^2}{\sqrt{n}}\right)\]
    when $|t-h/2| > n^{3/5}$. These estimates together immediately yield a bound of the claimed quality.
	
	From now on we assume $|t-h/2|\le n^{3/5}$. We next compute the variance of $\Lambda_T$. Following the computation in the proof of \cref{lem:slice-moment}, we see
	\begin{align*}
		\on{Var}[\Lambda_T] &=\frac{4}{(n-1)}\bigg(\frac{t(h-t)}{h(h-1)}\frac{\sum_{i, j\in S\setminus v}(\beta_i-\beta_j)^2}{h}\\
		&\qquad\qquad\qquad\qquad+ \frac{(d_v-t)((n-h)-(d_v-t))}{(n-h)(n-h-1)}\frac{\sum_{i,j\in S^c\setminus v}(\beta_i-\beta_j)^2}{n-h}\bigg) + O(n^{-1/4}),
	\end{align*}
	these sums being over all (unordered) two-element subsets; here, we again use the fact that the fraction $t/|S\setminus v|$ is close to $t/h$ regardless of if $v\in S$ or $v \in S^c$. Now using $t = h/2 \pm n^{3/5}$ and $d_v = n/2 + O(\sqrt{n}(\log n))$, we find
	\begin{align*}
	\on{Var}[\Lambda_T] &=  \frac{1}{nh}\sum_{i, j\in S\setminus v}(\beta_i-\beta_j)^2 + \frac{1}{n(n-h)}\sum_{i ,j\in S^c\setminus v}(\beta_i-\beta_j)^2 \pm O(n^{-1/4})\\
	&= 2\Lambda_4 \pm O(n^{-1/4}).
	\end{align*}
	Note that $\on{Var}[\Lambda_T]\le\sum_{i=1}^n\beta_i^2/\min(h,n-h) = O(n(\log n)^{1/9}/\min(h,n-h))$, and apply \cref{lem:slice-moment} to the two slices defining $\Lambda_T$. Note that the condition $\eta^2\le\sqrt{\log n}$ follows from the inequalities $(n/h)(\log n)^{1/9} < \sqrt{\log n}$ and the relation between $\sigma^2$ and $\eta^2$ in the proof of \cref{lem:slice-moment}. Therefore
	\begin{align*}\mb{E}[\exp(-\Lambda_T)] &= \exp\left(\mb{E}[-\Lambda_T] + \frac{1}{2}\on{Var}[\Lambda_T] \pm O(n^{-1/9})\right)\\ &= \exp\left(-\Lambda_2 + \Lambda_4 \pm O(n^{-1/9})\right).
	\end{align*}
	
	Plugging this last estimate into \cref{prop:graph-expectation}, we obtain
\[	\mb{P}(\deg_S(v) = t)
		= (1\pm O(n^{-1/10}))\frac{\binom{h}{t}\binom{n-h-1}{d_v-t}}{\binom{n-1}{d_v}}\exp(\Lambda_1 - \Lambda_2 - \Lambda_3 + \Lambda_4),
\]
as desired, using the fact that the product of binomials in question changes by a small factor depending on whether $v\in S$ or $v\in S^c$, a factor which is nonetheless subsumed by the error term with room to spare.
\end{proof}

The proof of \cref{lem:enum-consequence-bipartite} is analogous to the argument above, so in this case, we only record the appropriate intermediate results needed, and omit the details.

\cref{lem:enum-consequence-bipartite} may be deduced from the following result in the same fashion as \cref{lem:enum-consequence} was from \cref{prop:graph-bounded}.

\begin{proposition}\label{prop:bigraph-bounded}
Let $((d_v)_{v\in V}, (d_w)_{w\in W})$ be a pair of sequences with identical sums on a bipartition $V\cup W$ with $|V|$ = m, $|W|=n$ such that, defining $\alpha_v$ by $d_v = (n-1)/2+\alpha_v\sqrt{(n-1)} / 2$ for $v \in V$ and  $\beta_w$ by $d_w = (n-1)/2+\beta_w\sqrt{(n-1)} / 2$ for $w \in W$, we have
\begin{itemize}
\item $(\log n)^{-1/4}\le m/n\le(\log n)^{1/4}$,
\item $|\alpha_v| \le \log n$ for each $v \in V$ and $|\beta_w| \le \log n$ for each $w \in W$, and
\item (n/m) $\sum_{w \in W} \beta_w^2 \le n (\log n)^{1/9}$.
\end{itemize}
Then it is a bipartite-graphic degree sequence (for $n$ large). Let $G$ be a uniformly random graph with this degree sequence on the vertex set $W$. Consider any fixed $u \in V$, any fixed subset $S\subseteq W$ of size $h$ satisfying $\min(h,n-h)\ge n/(\log n)^{1/8}$, and an integer $t\in[0,d_u]$. If $|t-h/2| > n^{3/5}$, then we have
	\begin{equation}\label{bigr-bd-big}
	    \mb{P}(\deg_S(u) = t)  \le \exp(-\Omega((t-h/2)^2/n)).
	\end{equation}
If $|t-h/2|\le n^{3/5}$ on the other hand, then we have
        \begin{equation}\label{bigr-bd-small}
        \mb{P}(\deg_S(u) = t)
		= (1\pm O(n^{-1/10}))\frac{\binom{h}{t}\binom{n-h}{d_u-t}}{\binom{n}{d_u}}\exp(\Lambda_1 - \Lambda_2 - \Lambda_3 + \Lambda_4),
		\end{equation}
where $\Lambda_1$, $\Lambda_2$, $\Lambda_3$ and $\Lambda_4$ are given by	
\begin{align*}
		\Lambda_1 &= \frac{1}{2mn}\left(\sum_{i \in W}\beta_i\right)\left(\sum_{i\in W}\beta_i-2\sqrt{mn}\alpha_u\right),\\
		\Lambda_2 &= \sum_{i\in S}\left(1-\frac{2t}{h}\right)\frac{\beta_i}{\sqrt{m}}+\sum_{i\in W\setminus S}\left(1-\frac{2(d_v-t)}{(n-h)}\right)\frac{\beta_i}{\sqrt{m}},\\
		\Lambda_3 &= \frac{1}{2}\sum_{i\in W}\frac{\beta_i^2}{m}, \text{ and }\\
		\Lambda_4 &= \frac{1}{2mh}\sum_{i, j\in S}(\beta_i-\beta_j)^2 + \frac{1}{2m(n-h)}\sum_{i ,j\in W\setminus S}(\beta_i-\beta_j)^2,
	\end{align*}
the sums in the definition of $\Lambda_4$ being over all (unordered) two-element subsets. \qed
\end{proposition}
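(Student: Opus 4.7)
The plan is to follow the blueprint of the non-bipartite argument line for line, first proving a bipartite analog of \cref{prop:graph-expectation} from which \cref{prop:bigraph-bounded} will follow by exactly the same slice-inequality maneuvers used to deduce \cref{prop:graph-bounded}. Concretely, let $G$ be uniform among bipartite graphs on $V\cup W$ with the prescribed degrees and condition on $N_W(u)=T$ for a candidate $T\subseteq W$ of size $d_u$. The ratio of $\Pr(N_W(u)=T)$ to $\Pr(N_W(u)=T')$ for two such candidates can be evaluated using the Canfield--Greenhill--McKay bipartite enumeration formula, which produces a quadratic-exponential correction in the residual degree sequence. Summing over $T=T_1\cup T_2$ with $T_1\in\binom{S}{t}$ and $T_2\in\binom{W\setminus S}{d_u-t}$ yields the bipartite analog of \cref{prop:graph-expectation}: the probability equals $\binom{h}{t}\binom{n-h}{d_u-t}\binom{n}{d_u}^{-1}\exp(\Lambda_1-\Lambda_3)\,\mathbb{E}_T[\exp(-\Lambda_T)]$ up to a factor of $1\pm O(n^{-1/6})$, where $\Lambda_T=\sum_{i\in W}(-1)^{\mathbbm{1}_T(i)}\beta_i/\sqrt{m}$. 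Note that since $u\in V$ lies on the opposite side from $S\subseteq W$, there is no $\mathbbm{1}_S(v)$ correction and the binomial prefactor is cleaner than in the non-bipartite case.

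Once the formula is in hand, the large-deviation regime $|t-h/2|>n^{3/5}$ is immediate: the hypergeometric factor already contributes $\exp(-\Omega((t-h/2)^2/n))$ by the standard tail bound (as in~\cite[Theorem~2.10]{JLR00}); the hypothesis $|\beta_w|\le\log n$ and the constraint on $m/n$ together imply $|\Lambda_1|,|\Lambda_3|=O((\log n)^2)$; and \cref{lem:slice-subgaussian} gives $\mathbb{E}[\exp(-\Lambda_T)]\le\exp(|\mathbb{E}[\Lambda_T]|+O((\log n)^2))$, with the explicit computation $\mathbb{E}[\Lambda_T]=\Lambda_2\pm O(n^{-1/3})$ and $|\mathbb{E}[\Lambda_T]|=O(|t-h/2|(\log n)^2/\sqrt n)$ showing that these subgaussian losses are absorbed by the hypergeometric prefactor with room to spare, which proves \cref{bigr-bd-big}.

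For the small-deviation regime $|t-h/2|\le n^{3/5}$, the same routine calculation as in the non-bipartite case gives $\mathbb{E}[\Lambda_T]=\Lambda_2\pm O(n^{-1/3})$ and $\mathrm{Var}[\Lambda_T]=2\Lambda_4\pm O(n^{-1/4})$; the aspect-ratio condition $(\log n)^{-1/4}\le m/n\le(\log n)^{1/4}$ ensures that the per-coordinate coefficient $1/\sqrt m$ stays within $\textrm{polylog}(n)$ of $1/\sqrt n$, so the hypotheses $|a_i|\le n^{-1/2}(\log n)^2$ and $\eta^2\le\sqrt{\log n}$ of \cref{lem:slice-moment} are verified using the moment bound $(n/m)\sum_{w}\beta_w^2\le n(\log n)^{1/9}$ (which is precisely why the second-moment assumption is stated with the factor $n/m$ rather than just as a sum of squares). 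Applying \cref{lem:slice-moment} to each of the two independent slices defining $T_1$ and $T_2$ then yields $\mathbb{E}[\exp(-\Lambda_T)]=\exp(-\Lambda_2+\Lambda_4\pm O(n^{-1/9}))$, and substituting into the bipartite analog of \cref{prop:graph-expectation} produces \cref{bigr-bd-small}.

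The main obstacle is establishing the bipartite analog of \cref{prop:graph-expectation} itself: one must check that the CGM bipartite enumeration formula applies under the stated hypotheses (in particular that the aspect-ratio bound on $m/n$ and the moment bound on the $\beta_w$'s suffice), and then carefully track how the quadratic form arising from the enumeration reorganises into the specific expressions $\Lambda_1$ and $\Lambda_3$ after the rescaling $d_w=(n-1)/2+\beta_w\sqrt{n-1}/2$ is unpacked across the bipartition. The asymmetric roles of $V$ and $W$ (only $W$-side $\beta$'s appear in $\Lambda_1,\Lambda_3,\Lambda_T$, while $\alpha_u$ appears in $\Lambda_1$ alone via the cross term $2\sqrt{mn}\alpha_u$) must be tracked carefully, but once this bookkeeping is complete the remainder of the proof is mechanical and directly parallels the non-bipartite case.
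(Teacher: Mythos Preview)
Your proposal is correct and follows essentially the same approach as the paper: deduce \cref{prop:bigraph-bounded} from the bipartite analog \cref{prop:bigraph-expectation} (itself obtained from the Canfield--Greenhill--McKay enumeration \cref{thm:enum-bigraph} and the second estimate in \cref{lem:binomial-approximation}) by computing $\mathbb{E}[\Lambda_T]=\Lambda_2\pm O(n^{-1/3})$ and $\operatorname{Var}[\Lambda_T]=2\Lambda_4\pm O(n^{-1/4})$, then invoking the hypergeometric tail bound together with \cref{lem:slice-subgaussian} for the large-deviation regime and \cref{lem:slice-moment} for the small-deviation regime. One cosmetic point: the error term in the paper's \cref{prop:bigraph-expectation} is $1\pm O(n^{-1/8})$ rather than $1\pm O(n^{-1/6})$, reflecting the weaker error in \cref{thm:enum-bigraph} compared to \cref{thm:enum-graph}, but this is immaterial for the final $O(n^{-1/10})$ bound.
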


As before, \cref{prop:bigraph-bounded} itself is a consequence of the following result, whose proof will be sketched in \cref{app:Proof} once we have collected the requisite machinery in \cref{app:enumresults}.

\begin{proposition}\label{prop:bigraph-expectation}
Let $((d_v)_{v\in V}, (d_w)_{w\in W})$ be a pair of sequences of identical sums on a bipartition $V\cup W$ with $|V|$ = m, $|W|=n$ such that, defining $\alpha_v$ by $d_v = (n-1)/2+\alpha_v\sqrt{(n-1)} / 2$ for $v \in V$ and  $\beta_w$ by $d_w = (n-1)/2+\beta_w\sqrt{(n-1)} / 2$ for $w \in W$, we have
\begin{itemize}
\item $(\log n)^{-1/4}\le m/n\le(\log n)^{1/4}$,
\item $|\alpha_v| \le \log n$ for each $v \in V$ and $|\beta_w| \le \log n$ for each $w \in W$.
\end{itemize}
Such a pair of sequences form a bipartite-graphic sequence for all sufficiently large $n$. Let $G$ be a uniformly random graph with this degree sequence on the vertex set $W$. For any fixed $u \in V$, $S\subseteq W$ of size $h$ satisfying $\min(h,n-h)\ge n/(\log n)^{1/8}$, and an integer $t\in[0,d_u]$, we have
	\[
		\mb{P}(\deg_S(u) = t)=(1\pm O(n^{-1/8}))\frac{\binom{h}{t}\binom{n-h}{d_u-t}}{\binom{n}{d_u}} \exp(\Lambda_1 - \Lambda_3)
		\mb{E}_T\left[\exp(-\Lambda_T)\right],
	\]
	where $T = T_1 \cup T_2$ is a random set chosen by picking $T_1$ uniformly from $\binom{S}{t}$ and $T_2$ uniformly from $\binom{W\setminus S}{d_u-t}$, and where $\Lambda_1$, $\Lambda_3$ and $\Lambda_T$ are given by	
\begin{align*}
		\Lambda_1 &= \frac{1}{2mn}\left(\sum_{i \in W}\beta_i\right)\left(\sum_{i\in W}\beta_i-2\sqrt{mn}\alpha_u\right),\\
		\Lambda_3 &= \frac{1}{2}\sum_{i\in W}\frac{\beta_i^2}{m}, \text{ and }\\
    	\Lambda_T &= \sum_{i\in W}(-1)^{\mbm{1}_T(i)}\frac{\beta_i}{\sqrt{m}}.
	\end{align*}
\end{proposition}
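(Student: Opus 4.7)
The proof will mirror the strategy for \cref{prop:graph-expectation}, adapted to the bipartite setting by invoking the Canfield--Greenhill--McKay enumeration formula for bipartite graphs with a prescribed degree sequence (the relevant statement is collected in \cref{app:enumresults}). The starting point is the identity
\[
\mb{P}(\deg_S(u) = t) \;=\; \frac{1}{N((d_v),(d_w))}\sum_{T_1\in\binom{S}{t}}\sum_{T_2\in\binom{W\setminus S}{d_u-t}} N'(T),
\]
where $N((d_v),(d_w))$ counts bipartite graphs with the prescribed degree sequence, and $N'(T)$ counts bipartite graphs on $(V\setminus\{u\})\cup W$ with $d_v$ preserved for $v\in V\setminus\{u\}$ and each $d_w$ decremented by $\mbm{1}_T(w)$. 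So the whole task is to understand the ratio $N'(T)/N((d_v),(d_w))$ uniformly for $T$ with $|T_1|=t,\ |T_2|=d_u-t$ and then sum over $T$.

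The bipartite CGM formula expresses each of $N'(T)$ and $N((d_v),(d_w))$ as a ``leading'' multinomial/hypergeometric main term times an exponential correction $\exp(E(\cdot))$ whose exponent is an explicit quadratic/quartic in the normalised degree deviations $\alpha_v$ and $\beta_w$. Upon taking the ratio, the multinomial factor collapses, after routine simplification via Stirling, to
\[
\frac{\binom{n}{d_u}^{-1}\prod_{w\in T}d_w\prod_{w\notin T}(n-1-d_w)\text{ type terms}}{\cdots},
\]
which after summing over $T$ of the prescribed type produces the hypergeometric factor $\binom{h}{t}\binom{n-h}{d_u-t}/\binom{n}{d_u}$ (up to a $1\pm O(n^{-1/8})$ error absorbed by the crude bounds $|\alpha_v|,|\beta_w|\le\log n$ and $\min(h,n-h)\ge n/(\log n)^{1/8}$). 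The exponential correction, upon subtraction, splits into a piece not depending on $T$ and a piece that does.

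The piece not depending on $T$ accounts for removing vertex $u$ and its $d_u$ incident edges at the level of the main quadratic form in the CGM exponent; one checks by direct computation that this contribution is exactly $\Lambda_1 - \Lambda_3$ up to negligible error. The $T$-dependent piece comes from the linear response of the CGM exponent to decrementing the degrees $(d_w)_{w\in T}$; at leading order this linearisation gives $-\sum_{i\in T}\beta_i/\sqrt m + \sum_{i\notin T}\beta_i/\sqrt m \cdot (\text{sign factor})$, which is precisely $-\Lambda_T$ (with the $(-1)^{\mbm{1}_T(i)}$ encoding whether we sum over $T$ or its complement). Once one normalises the sum over $T$ to become an expectation over a random $T_1\in\binom{S}{t},\ T_2\in\binom{W\setminus S}{d_u-t}$, one obtains the stated formula.

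The main obstacle is the bookkeeping in the second step: showing that the CGM correction $\exp(E(d))$ factors cleanly as $\exp(\Lambda_1-\Lambda_3)\cdot\exp(-\Lambda_T)$ up to a $1\pm O(n^{-1/8})$ multiplicative error uniformly in $T$. This requires carefully tracking the quadratic cross terms in $E$: the mixed $\alpha_u\beta_w$ terms give $\Lambda_1$, the diagonal $\beta_w^2$ terms collapse to $\Lambda_3$ once one sums over the deletion of $u$, and the linear-in-$(\mbm{1}_T(w))$ terms produce $-\Lambda_T$ with the remaining quadratic-in-$\mbm{1}_T$ terms being negligible thanks to the assumption $(\log n)^{-1/4}\le m/n\le (\log n)^{1/4}$ and the uniform bound $|\beta_w|\le\log n$. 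All other cross terms between $(\alpha_v)_{v\ne u}$ and $(\beta_w)$ cancel exactly in the ratio since the degrees at non-$u$ vertices of $V$ are unchanged. With this factorisation in hand, the remainder of the proof is a direct (if tedious) Stirling-based simplification, entirely parallel to the treatment of \cref{prop:graph-expectation}.
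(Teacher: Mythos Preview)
Your overall strategy matches the paper's: compute $\mb P[N_W(u)=T]$ as a ratio of CGM counts (removing $u$ from $V$ and decrementing the $W$-degrees at $T$), then sum over $T$ with $|T\cap S|=t$. The paper's proof is literally ``mirror \cref{prop:graph-expectation}, using \cref{thm:enum-bigraph} and the second estimate in \cref{lem:binomial-approximation}'', and you have identified those same ingredients.

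However, your bookkeeping misattributes where $\Lambda_T$ and $\Lambda_3$ come from, and this would derail an actual execution. The CGM exponent $\exp\!\left(-\tfrac12(1-\gamma_2(\mbf s)^2/\mu(1-\mu))(1-\gamma_2(\mbf t)^2/\mu(1-\mu))\right)$ is essentially \emph{stable} in the ratio: deleting $u$ and decrementing degrees at $T$ perturbs $\gamma_2^2$ and $\mu$ by $O(n^{-1/4})$, so this factor contributes only a $(1\pm O(n^{-1/4}))$. The terms $\Lambda_T$ and $\Lambda_3$ arise not from this exponent but from the per-vertex binomial product
\[
\prod_{w\in W}\frac{\binom{m-1}{d_w-\mbm 1_T(w)}}{\binom{m}{d_w}}
=\prod_{w\in T}\frac{d_w}{m}\prod_{w\notin T}\frac{m-d_w}{m}
=2^{-n}\prod_{w\in T}\Bigl(1+\tfrac{\beta_w}{\sqrt m}\Bigr)\prod_{w\notin T}\Bigl(1-\tfrac{\beta_w}{\sqrt m}\Bigr),
\]
and expanding $\log(1\pm\beta_w/\sqrt m)$ to second order gives exactly $-\Lambda_T-\Lambda_3$. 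In particular, you should \emph{not} sum this product over $T$ to obtain the hypergeometric factor and then look elsewhere for $\Lambda_T$; rather, the hypergeometric factor $\binom{h}{t}\binom{n-h}{d_u-t}$ is simply the number of admissible $T$, which converts the sum into $\binom{h}{t}\binom{n-h}{d_u-t}\,\mb E_T[\cdot]$. Finally, $\Lambda_1$ comes from the single ``big'' binomial ratio $\binom{mn}{e}^{-1}\binom{(m-1)n}{e-d_u}$ via the second estimate of \cref{lem:binomial-approximation}, exactly as $\Phi$ produced $\Lambda_1$ in the proof of \cref{prop:graph-expectation}.
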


\section{Graph enumeration results and related estimates}\label{app:enumresults}
The main tools needed to prove \cref{prop:graph-expectation,prop:bigraph-expectation} are the following enumeration theorems of McKay and Wormald~\cite{MW90} and of Canfield, Greenhill, and McKay~\cite{CGM08}. 

\begin{theorem}\label{thm:enum-graph}
There exists a fixed constant $\varepsilon>0$ such that the following holds. Consider a sequence $\mbf{d} = (d_1, \dots, d_n)$ with even sum such that, writing $\ol{d}=(1/n)\sum_{i=1}^n d_i$, we have
\begin{itemize}
    \item $|d_i-\ol{d}|\le n^{1/2+\varepsilon}$ for $1 \le i \le n$, and
    \item $\ol{d}\ge n/\log n$.
\end{itemize}
Writing $m = \ol{d}n/2\in\mb{Z}$, $\mu = \ol{d}/(n-1)$, and $\gamma_2^2 = (1/(n-1)^2)\sum_{i=1}^n(d_i-\ol{d})^2$, the number of labelled graphs with degree sequence $\mbf{d}$ is
\begin{align*}
    (1\pm O(n^{-1/4}&))  \exp\left(\frac{1}{4}-\frac{\gamma_2^2}{4\mu^2(1-\mu)^2}\right)\binom{n(n-1)/2}{m}\binom{n(n-1)}{2m}^{-1}\prod_{i=1}^n\binom{n-1}{d_i}. \tag*\qed
\end{align*}
\end{theorem}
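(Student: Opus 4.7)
The approach I would take is McKay's switching method applied to the configuration model. Each vertex $i$ is assigned $d_i$ half-edges, and the $2m$ half-edges are matched uniformly at random; the total number of such matchings is $(2m-1)!!$, and each simple graph with degree sequence $\mbf{d}$ arises from exactly $\prod_i d_i!$ of them. Hence the count of labelled graphs equals $\bigl((2m-1)!!/\prod_i d_i!\bigr)\cdot p$, where $p$ is the probability that a uniformly random matching contains no loop and no repeated pair. The bulk of the work lies in estimating $p$ to relative precision $1 \pm O(n^{-1/4})$.

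For this estimate, I would show that in the near-regular regime specified by the hypotheses, the joint distribution of the number of loops $L$ and the number of double edges $M$ in a random configuration is approximately a product of two independent Poissons, with parameters determined by $\mu = \ol{d}/(n-1)$ and by the degree variance $\gamma_2^2$. The tool is a sequence of switching arguments: given a configuration with $k$ loops, one constructs explicit local moves that remove a single loop and then carefully counts the number of valid forward and backward switchings. This shows that $\Pr[L = k]/\Pr[L = k-1]$ matches the corresponding ratio of consecutive Poisson weights to within $1 \pm O(n^{-\Omega(1)})$, and iterating (and applying an analogous argument to double edges) yields Poisson limits for both statistics. Summing the resulting Poisson probabilities produces the exponential correction factor $\exp\bigl(1/4 - \gamma_2^2/(4\mu^2(1-\mu)^2)\bigr)$: the $1/4$ comes from the leading-order contribution to $L$ and $M$ in the regular case, while the $\gamma_2^2$ term enters through the variance-dependent count of backward switchings, whose availability is sensitive to the second moment of $\mbf{d}$.

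The final step is a purely algebraic conversion of $(2m-1)!!/\prod_i d_i!$ into the stated form $\binom{n(n-1)/2}{m}\binom{n(n-1)}{2m}^{-1}\prod_i \binom{n-1}{d_i}$, which is a routine application of Stirling's formula together with the definitions of the binomial coefficients. The main obstacle I anticipate is the switching analysis itself: in the dense regime both $G$ and its complement have $\Theta(n)$ average degree, so a switching that removes a loop may inadvertently create a double edge (or vice versa), and the two statistics must be tracked simultaneously; moreover, the required precision $1 \pm O(n^{-1/4})$ leaves little room for crude bounding of the second-order correction terms that the non-regular part of $\mbf{d}$ contributes. A cleaner alternative would be the complex-analytic saddle-point method used by Canfield--Greenhill--McKay~\cite{CGM08}, which contour-integrates a multivariate generating function and avoids switchings entirely, though it faces comparable technical difficulties at the precision demanded here.
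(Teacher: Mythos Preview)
The paper does not prove this theorem; it is quoted from McKay--Wormald~\cite{MW90} as a known enumeration result (note the \texttt{\textbackslash qed} at the end of the statement and the introductory sentence in \cref{app:enumresults}). So there is no ``paper's own proof'' to compare against, only the original literature.

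That said, your proposed approach has a genuine gap. The configuration-model/switching strategy you describe is the right tool in the \emph{sparse} regime, where the expected numbers of loops and double edges in a random matching are $O(1)$ and a Poisson approximation is meaningful. Under the hypotheses here, however, $\ol d \ge n/\log n$, so with $m = \ol d n/2$ the expected number of loops is
\[
\sum_i \binom{d_i}{2}\Big/(2m-1)\;\asymp\;\frac{n\,\ol d^{\,2}}{2n\ol d}\;=\;\frac{\ol d}{2}\;\ge\;\frac{n}{2\log n},
\]
and the expected number of repeated pairs is of order $n\ol d$ or larger. Consequently the probability $p$ that a random configuration is simple is $\exp(-\Omega(n/\log n))$, not $\Theta(1)$; the Poisson heuristic for $L$ and $M$ breaks down completely, and the ratio computations $\Pr[L=k]/\Pr[L=k-1]$ you propose cannot be iterated to the relevant values of $k$ with any useful control. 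No rewriting of $(2m-1)!!/\prod_i d_i!$ via Stirling will rescue this, because the exponential factor in the stated formula is \emph{not} the probability of simplicity in the configuration model.

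The method McKay and Wormald actually use in~\cite{MW90} for this dense regime is precisely the complex-analytic saddle-point approach you mention at the end as a ``cleaner alternative'': one writes the number of graphs as a $2^{\binom n 2}$-fold coefficient extraction, expresses it as a multidimensional contour integral, and evaluates the integral asymptotically. That is not an alternative route here but the only one known to work; the switching argument is the wrong tool for this density range.
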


\begin{theorem}\label{thm:enum-bigraph}
There exists a fixed constant $\varepsilon>0$ such that the following holds. 
Consider a pair of sequences $(\mbf{s} = (s_1, \dots, s_n), \mbf{t} = (t_1, \dots, t_m))$ with identical sums such that, writing $\ol{s}=(1/n)\sum_{i=1}^n s_i$ and $\ol{t}=(1/n)\sum_{i=1}^m t_i$, we have

\begin{itemize}
    \item $n/(\log n)^{1/2}\le m\le n(\log n)^{1/2}$,
    \item $|s_i-\ol{s}|\le n^{1/2+\varepsilon}$ for $1 \le i \le n$ and $|t_i-\ol{t}| \le m^{1/2+\eps}$ for $1 \le i \le m$, and
    \item $\ol{s}\ge n/(\log n)^{1/2}$ and $\ol{t} \ge m/(\log m)^{1/2}$.
\end{itemize}

Writing $\mu = \sum_{i=1}^n s_i/(mn) = \sum_{i=1}^m t_i/(mn)$, $\gamma_2(\mbf s)^2 = (1/n^2)\sum_{i=1}^n(s_i-\ol{s})^2$ and $\gamma_2(\mbf t)^2 = (1/m^2)\sum_{i=1}^m(t_i-\ol{t})^2$, the number of labelled bipartite graphs whose partition classes have degree sequences $\mbf{s}$ and $\mbf{t}$ is
\begin{align*}
(1\pm O(n^{-1/8}&))\exp\left(-\frac{1}{2}\left(1-\frac{\gamma_2(\mbf{s})^2}{\mu(1-\mu)}\right)\left(1-\frac{\gamma_2(\mbf{t})^2}{\mu(1-\mu)}\right)\right)\binom{mn}{mn\mu}^{-1}\prod_{i=1}^n\binom{m}{s_i}\prod_{i=1}^m\binom{n}{t_i}.\tag*\qed
\end{align*}
\end{theorem}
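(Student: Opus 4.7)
I plan to prove Theorem~\ref{thm:enum-bigraph} via the complex-analytic saddle-point method of Canfield and McKay, adapted by Canfield, Greenhill, and McKay to accommodate bipartite degree sequences up to density roughly $1/\sqrt{\log n}$. The starting point is the generating function identity
\[ B(\mbf{s},\mbf{t}) = [x_1^{s_1}\cdots x_n^{s_n}\, y_1^{t_1}\cdots y_m^{t_m}] \prod_{i=1}^n\prod_{j=1}^m(1+x_iy_j), \]
which via Cauchy's formula on circles $|x_i| = r_i$ and $|y_j| = \rho_j$ can be written as an oscillatory integral over the phase torus $[-\pi,\pi]^{n+m}$. The first substantive step is to choose the radii $(r_i,\rho_j)$ at the approximate saddle point, determined by the condition that the associated independent-edge model with edge probability $r_i\rho_j/(1+r_i\rho_j)$ assigned to slot $(i,j)$ has row and column expected degrees matching $\mbf{s}$ and $\mbf{t}$; existence and local uniqueness of this saddle point follow from a standard fixed-point argument under the nearly-regular hypotheses of the theorem.

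Next I would split the integral into a main region $R = \{|\theta_i|, |\phi_j| \le n^{-1/2}(\log n)^C\}$ and its complement. On $R$, expanding the logarithm of the integrand around the origin to second order gives a multivariate Gaussian; the Hessian is the $(n+m)\times(n+m)$ matrix capturing the covariance structure of the row and column degree sums under the independent-edge model, which in the regular case is block-structured with two low-rank perturbations enforcing the total-edge identity. Its determinant can be computed exactly by Sylvester-type identities, and this computation is precisely where the factor $\exp(-\tfrac12(1-\gamma_2(\mbf{s})^2/(\mu(1-\mu)))(1-\gamma_2(\mbf{t})^2/(\mu(1-\mu))))$ arises. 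Matching the resulting Gaussian integral against the value of the integrand at the saddle point (after Stirling-expanding the product $\prod_i r_i^{-s_i}\prod_j \rho_j^{-t_j}$) and against the binomial factors $\prod_i\binom{m}{s_i}\prod_j\binom{n}{t_j}$ and $\binom{mn}{mn\mu}^{-1}$ then yields the stated main term.

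The principal obstacle is the tail bound outside $R$, where one must show $|F(re^{i\theta},\rho e^{i\phi})|/F(r,\rho)$ is exponentially small. Using $|1+re^{i\psi}|^2 = (1+r)^2 - 2r(1-\cos\psi)$ reduces the ratio to a product of contractions $\prod_{i,j}(1-c_{ij}(1-\cos(\theta_i+\phi_j)))$, and the argument then proceeds by a Fourier decomposition of the phase vector into a `mean' mode (absorbed by the saddle-point integral) and higher-frequency modes that must be shown to incur exponential damping. In the density range $\overline s \ge n/\sqrt{\log n}$ considered here, the simple circular-contour bounds of McKay--Wormald are insufficient, and one must shift contours into the complex plane and iterate the saddle-point approximation once in order to recover the next-order correction; achieving the claimed uniform $O(n^{-1/8})$ precision across the full family of admissible degree sequences is the delicate and technically most demanding part of the argument, and is the reason the CGM framework had to extend the original McKay--Wormald toolkit.
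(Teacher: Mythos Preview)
The paper does not prove Theorem~\ref{thm:enum-bigraph} at all: it is quoted verbatim from Canfield--Greenhill--McKay~\cite{CGM08} and used as a black box, as signalled by the \texttt{\textbackslash qed} at the end of the statement and the attribution in the text preceding it. So there is no ``paper's own proof'' to compare against; the theorem is an imported tool, not a result established in this work.

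Your outline is a reasonable high-level sketch of the complex-analytic saddle-point method that Canfield--Greenhill--McKay actually use, and the ingredients you name (Cauchy integral representation, choice of radii at an approximate saddle, Gaussian approximation on a small region, tail bounds via contour shifts) are the right ones. But as written it is a narrative rather than a proof: the determinant computation producing the $\exp(-\tfrac12(1-\cdots)(1-\cdots))$ factor, the matching of the saddle-point value against the binomial products, and especially the tail estimate in the dense regime are each substantial calculations that you have only gestured at. If the goal were genuinely to reprove the theorem from scratch you would need to carry these out in detail; for the purposes of this paper, however, the correct move is simply to cite~\cite{CGM08}.
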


We remark that these enumeration results are now known to hold under even broader conditions on the degree sequences (i.e., $\mbf{d}$, $\mbf{s}$ and $\mbf{t}$) due to works of Barvinok and Hartigan~\cite{BH13}, and for essentially all sparsities by recent work of Liebenau and Wormald~\cite{LW17,LW20}. We refer the reader to~\cite{Wor18} for an excellent survey of these results.

In order to estimate the expressions in \cref{thm:enum-graph,thm:enum-bigraph}, we shall also require the following estimates for binomial coefficients. These follow from sufficiently precise versions of Stirling's approximation for the factorial. These estimates are nonetheless somewhat nonstandard, and so we include proofs, following the exceptionally clean approach in~\cite{Spe14}.

\begin{lemma}\label{lem:binomial-approximation}
We have the following pair of estimates.
\begin{enumerate}
    \item Let $\Delta_1 = e-m(m-1)/4$ and $\Delta_2 = (m-1)/2-d$. If $|\Delta_1| = O(m^{3/2})$ and $|\Delta_2| = O(\sqrt{m\log m})$, then
    \[\frac{\binom{m(m-1)/2}{e}\binom{m(m-1)}{2e}^{-1}}{\binom{(m-1)(m-2)/2}{e-d}\binom{(m-1)(m-2)}{2e-2d}^{-1}} = (1\pm O(m^{-2/5})) 2^{-(m-1)}\exp(-8(\Delta_1^2+\Delta_1\Delta_2m)/m^3).\]
    \item Let $\Delta_1 = e-mn/2$ and $\Delta_2 = n/2-d$. If $|\Delta_1|\le O(m^{3/2})$, $|\Delta_2|\le O(\sqrt{m\log m})$, and $m=\Theta(n)$, then
    \[\binom{mn}{e}^{-1}\binom{(m-1)n}{e-d} = (1\pm O(m^{-2/5})) 2^{-n}\exp(-2(2m\Delta_1\Delta_2+\Delta_1^2)/(m^2n)).\]
\end{enumerate}
\end{lemma}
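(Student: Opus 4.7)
The plan is to apply a refined Stirling expansion to each binomial coefficient and then track cancellations in the resulting logarithms. The workhorse I would use is the central-binomial expansion
\begin{equation*}
\ln\binom{2M}{M+y} = 2M\ln 2 - \tfrac{1}{2}\ln(\pi M) - \frac{y^2}{M} - \frac{y^4}{6M^3} + O\!\left(\frac{y^6}{M^5} + \frac{1}{M}\right),
\end{equation*}
which follows from $\ln n! = n\ln n - n + \tfrac{1}{2}\ln(2\pi n) + O(1/n)$ combined with the Taylor expansion $(M+y)\ln(M+y)+(M-y)\ln(M-y) = 2M\ln M + y^2/M + y^4/(6M^3) + O(y^6/M^5)$, the odd powers of $y$ cancelling by parity. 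This matches the style of Spencer's clean approach.

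For the second part, I would write $e = mn/2 + \Delta_1$ and $e-d = (m-1)n/2 + (\Delta_1 + \Delta_2)$, apply the expansion to both $\binom{mn}{e}$ and $\binom{(m-1)n}{e-d}$, and subtract the logarithms. The $\ln 2$-terms collapse to $-n\ln 2$, the logarithmic prefactors contribute $\tfrac{1}{2}\ln(m/(m-1)) = O(1/m)$, and the Gaussian exponents satisfy
\begin{equation*}
-\frac{2(\Delta_1+\Delta_2)^2}{(m-1)n} + \frac{2\Delta_1^2}{mn} = -\frac{2\Delta_1^2}{m(m-1)n} - \frac{4\Delta_1\Delta_2}{(m-1)n} - \frac{2\Delta_2^2}{(m-1)n},
\end{equation*}
which agrees with the target $-2(2m\Delta_1\Delta_2 + \Delta_1^2)/(m^2 n)$ up to a $\Delta_2^2$-piece of size $O(\log m/m)$ and an $O(1/m)$ relative correction from replacing $m-1$ by $m$; both are comfortably $o(m^{-2/5})$ under the hypotheses $|\Delta_1| = O(m^{3/2})$, $|\Delta_2| = O(\sqrt{m\log m})$, $m = \Theta(n)$.

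For the first part, the same strategy applied to all four binomials leverages a key cancellation: since $\binom{N}{N/2+x}$ and $\binom{2N}{N+2x}$ both exhibit the leading Gaussian factor $\exp(-2x^2/N)$, in the ratio these quadratic exponents cancel to give
\begin{equation*}
\frac{\binom{N}{N/2+x}}{\binom{2N}{N+2x}} = \frac{\sqrt 2}{2^N}\exp\!\left(\frac{2x^2}{N} + \frac{4x^4}{3N^3} + O\!\left(\frac{x^6}{N^5} + \frac{1}{N}\right)\right).
\end{equation*}
With $N_1 = m(m-1)/2$, $x_1 = \Delta_1$ for the numerator and $N_2 = N_1 - (m-1)$, $x_2 = \Delta_1 + \Delta_2$ for the denominator, the prefactors yield $2^{N_2 - N_1} = 2^{-(m-1)}$, and a short algebraic rearrangement shows
\begin{equation*}
\frac{4\Delta_1^2}{m(m-1)} - \frac{4(\Delta_1+\Delta_2)^2}{(m-1)(m-2)} = -\frac{8(\Delta_1^2 + m\Delta_1\Delta_2)}{m(m-1)(m-2)} - \frac{4\Delta_2^2}{(m-1)(m-2)},
\end{equation*}
matching the claimed exponent $-8(\Delta_1^2 + m\Delta_1\Delta_2)/m^3$ up to an $O(1/m)$ relative factor from the denominator.

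The main technical hurdle is that the quartic Stirling correction $y^4/(6M^3)$ is of order one rather than negligible (because $|\Delta_1| = O(m^{3/2})$ while $M = \Theta(m^2)$ in part 1 or $\Theta(mn)$ in part 2), so the argument relies crucially on cancellation between the two binomials in each ratio. I would verify this by expanding $(\Delta_1+\Delta_2)^4/N_2^3 - \Delta_1^4/N_1^3$ term by term: the pure piece $\Delta_1^4(1/N_2^3 - 1/N_1^3)$ is $O(1/m^3)$ via $1/N_2^3 - 1/N_1^3 = O(m/N_1^4)$, the dominant cross-term $4\Delta_1^3\Delta_2/N_2^3$ is $O(\sqrt{\log m}/m)$, and the remaining cross-terms are even smaller; the sixth-order Stirling residual $y^6/M^5$ contributes only $O(1/m)$. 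Summing these estimates yields a total error of $O(\sqrt{\log m}/m)$, which is absorbed into the claimed $O(m^{-2/5})$ bound.
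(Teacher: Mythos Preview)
Your approach is correct and essentially the same as the paper's: both derive the central-binomial expansion with the quartic term (the paper via a product formula following Spencer, you via Stirling directly) and then exploit the crucial cancellation of the individually $O(1)$ quartic contributions across the two binomials in each ratio. One small slip: the ``pure piece'' $\Delta_1^4(1/N_2^3-1/N_1^3)$ is $O(1/m)$ rather than $O(1/m^3)$ (since $1/N_2^3-1/N_1^3=\Theta(m/N_1^4)$ and $\Delta_1^4/N_1^4=O(m^{-2})$), but this is still comfortably absorbed into $O(m^{-2/5})$ and does not affect the argument.
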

\begin{proof}
We first compute $\binom{n}{(n+i)/2}$ to sufficient precision when $|i|\le n^{4/5}$. Note that 
\[\binom{n}{(n+i)/2}\binom{n}{n/2}^{-1} = \prod_{j=1}^{i/2}\frac{n/2-j+1}{n/2+j} =\prod_{j=1}^{i/2}\frac{n/2-j}{n/2+j}\prod_{j=1}^{i/2}\frac{n/2-j+1}{n/2-j}.\]
The final product above is $(1 \pm O(n^{-1/5}))$ and may be safely ignored. For the first of the two products, note that 
\begin{align*}
\sum_{j=1}^{i/2}\log((n/2-j)/(n/2+j)) &= \sum_{j=1}^{i/2}-4j/n-2(2j/n)^3/3  \pm O(n^{-1/5})\\
&= -i^2/(2n) - i^4/(12n^3) \pm O(n^{-1/5}).
\end{align*}

Now, we have $\Delta_1 = e-m(m-1)/2$ and $\Delta_2 = (m-1)/2-d$. Applying this to the first ratio of binomial coefficients, we find that \begin{align*}
\frac{\binom{m(m-1)/2}{e}\binom{m(m-1)}{2e}^{-1}}{\binom{(m-1)(m-2)/2}{e-d}\binom{(m-1)(m-2)}{2e-2d}^{-1}} &= (1\pm O(m^{-2/5})) 2^{-(m-1)}\exp(-8(\Delta_1^2+\Delta_1\Delta_2m)/m^3),
\end{align*}
proving the first estimate. Next, note that $\Delta_1 = e-mn/2$ and $\Delta_2 = n/2-d$, so
\begin{align*}
\binom{mn}{e}^{-1}\binom{(m-1)n}{e-d} = (1\pm O(m^{-2/5})) 2^{-n}\exp(-2(2m\Delta_1\Delta_2+\Delta_1^2)/(m^2n)),
\end{align*}
proving the second estimate.
\end{proof}

\section{Proofs of the main technical estimates}\label{app:Proof}
With the results in \cref{app:enumresults} in hand, we are now ready to prove \cref{prop:graph-expectation,prop:bigraph-expectation}. We start with \cref{prop:graph-expectation}.
\begin{proof}[Proof of \cref{prop:graph-expectation}]
Given $\mbf{d} = (d_w)_{w \in W}$, $v \in W$ and $T\subseteq W\setminus v$ of size $d_v$, we shall estimate the probability of the neighbourhood of $v$ in $G$ being exactly $T$. 

To this end, let $\mbf{d}_T = (d_w-\mbm{1}_{T}(w))_{w \in W}$. As in \cref{thm:enum-graph}, let
\begin{align*}
\ol{d} &= \frac{1}{n}\sum_{i\in W}d_i,&&\ol{d}_T= \frac{1}{n-1}\sum_{i\in W\setminus v}d_i-\mbm{1}_{T}(i) = \frac{n\ol{d}-2d_v}{n-1},\\
r &= \frac{\ol{d}n}{2},&&r_T = \frac{\ol{d}_T(n-1)}{2} = r-d_v,\\
\mu &= \frac{\ol{d}}{n-1},&&\mu_T = \frac{\ol{d}_T}{n-2} = \frac{n}{n-2}\mu-\frac{2d_v}{(n-1)(n-2)},\\
\gamma_2^2 &= \frac{1}{(n-1)^2}\sum_{i\in W}(d_i-\ol{d})^2,&&\gamma_2^2(T) = \frac{1}{(n-2)^2}\sum_{i\in W\setminus v}(d_{T,i}-\ol{d}_T)^2.
\end{align*}

Note that $\mbf{d}$ and $\mbf{d}_T$ both clearly satisfy the conditions of \cref{thm:enum-graph} due to our hypotheses, and that
\[\gamma_2^2(T) = \gamma_2^2 \pm O(n^{-1/4}) \text{ and } \mu_T = \mu \pm O(1/n),\]
again, from the given hypotheses. Now define
\[ \Phi = \frac{\binom{(n-1)(n-2)/2}{r-d_v}\binom{(n-1)(n-2)}{2r-2d_v}^{-1}}{\binom{n(n-1)/2}{r}\binom{n(n-1)}{2r}^{-1}}2^{-(n-1)}\]
and recall $d_i = (n-1)/2+\beta_i\sqrt{(n-1)} / 2$. We have
\[r-\frac12\binom{n}{2} = \frac{1}{2}\sum_{i\in W}(d_i-(n-1)/2) = \frac{\sqrt{(n-1)}}{4}\sum_{i\in W}\beta_i.\]
From our hypotheses and the first estimate in \cref{lem:binomial-approximation}, we then deduce that
\begin{align*}
\Phi &= \exp\left(\frac{(\sum_{i\in W}\beta_i)(\sum_{i\in W}\beta_i-2n\beta_n)}{2n^2} \pm O(n^{-1/6})\right)\\
&= \exp(\Lambda_1 \pm O(n^{-1/6})).    
\end{align*}

The above estimates for $\gamma_2^2(T)$ and $\mu_T$ imply that
\[\frac{\exp\left(\frac{1}{4}-\frac{\gamma_2^2(T)}{4\mu_T^2(1-\mu_T)^2}\right)}{\exp\left(\frac{1}{4}-\frac{\gamma_2^2}{4\mu^2(1-\mu)^2}\right)} = 1 \pm O(n^{-1/4}),\]
and this fact in conjunction with \cref{thm:enum-graph} yields
\begin{align*}
\mb{P}[N(v) = T]&= (1\pm O(n^{-1/4}))\frac{\binom{(n-1)(n-2)/2}{r_T}\binom{(n-1)(n-2)}{2r_T}^{-1}\prod_{i \in W \setminus v}\binom{n-2}{d_i-\mbm{1}_T(i)}}{\binom{n(n-1)/2}{r}\binom{n(n-1)}{2r}^{-1}\prod_{i \in W}\binom{n-1}{d_i}}\\
&= (1\pm O(n^{-1/4}))\frac{\Phi 2^{n-1}}{\binom{n-1}{d_v}}\prod_{i\in T}\frac{d_i}{n-1}\prod_{i\notin T}\frac{n-1-d_i}{n-1}\\
&= (1\pm O(n^{-1/4}))\frac{\Phi}{\binom{n-1}{d_v}}\prod_{i\in T}\left(1+\frac{\beta_i}{\sqrt{n-1}}\right)\prod_{i\notin T}\left(1-\frac{\beta_i}{\sqrt{n-1}}\right)\\
&= \frac{\Phi}{\binom{n-1}{d_v}}\exp\left(-\sum_{i\in W\setminus v}(-1)^{\mbm{1}_T(i)}\frac{\beta_i}{\sqrt{n-1}}-\frac{1}{2}\sum_{i\in W\setminus v}\frac{\beta_i^2}{n-1}\pm O(n^{-1/4})\right)\\
&= \frac{\Phi}{\binom{n-1}{d_v}}\exp\left(- \Lambda_T - \Lambda_3 \pm O(n^{-1/4})\right).
\end{align*}

Since the above estimate holds for every choice of $T \subseteq W \setminus v$, we may finish by noting that
\begin{align*}
(1\pm O(n^{-1/4}))\frac{\binom{n-1}{d_v}}{\Phi \binom{h-\mbm{1}_S(v)}{t}\binom{n-h-\mbm{1}_{S^c}(v)}{d_v-t}}\mb{P}[\deg_S(v) = t]= \exp(-\Lambda_3) \mb{E}_{T}[\exp(-\Lambda_T)],
\end{align*}
where $T = T_1 \cup T_2$ is a random set chosen by picking $T_1$ uniformly from $\binom{S}{t}$ and $T_2$ uniformly from $\binom{W\setminus S}{d_v-t}$. Rearranging this, and recalling that $\Phi = \exp(\Lambda_1 \pm O(n^{-1/6}))$, gives us the desired result.
\end{proof}

To finish, we outline the proof of \cref{prop:bigraph-expectation}.

\begin{proof}[Proof of \cref{prop:bigraph-expectation}]
The proof of this proposition mirrors that of \cref{prop:graph-expectation}, except now using \cref{thm:enum-bigraph} instead of \cref{thm:enum-graph}, and the second estimate in \cref{lem:binomial-approximation} instead of the first. Since the requisite calculations are routine (and are analogous to those spelled out in the proof \cref{prop:graph-expectation}), we leave the details of these calculations to the reader.
\end{proof}

\end{document}